\crefname{hypothesis}{Hypothesis}{Hypotheses}
\def\XXint#1#2#3{{\setbox0=\hbox{$#1{#2#3}{\int}$ }
\vcenter{\hbox{$#2#3$ }}\kern-.6\wd0}}
\title{Auxiliary Space Preconditioners for $C^{0}$ Finite Element
Approximation of Hamilton--Jacobi--Bellman Equations with Cordes
Coefficients
 }
\author{
Guangwei Gao \quad \quad 
Shuonan Wu\thanks{Corresponding author. School of Mathematical Sciences,
 Peking University, Beijing 100871, China 
.}
}
\begin{document}

\maketitle
\begin{abstract}
  In the past decade, there are many works on the finite element
  methods for the fully nonlinear Hamilton--Jacobi--Bellman (HJB)
  equations with Cordes condition.  The linearised systems have large
  condition numbers, which depend not only on the mesh size, but also
  on the parameters in the Cordes condition. This paper is concerned
  with the design and analysis of auxiliary space preconditioners for
  the linearised systems of $C^0$ finite element discretization of HJB
  equations [Calcolo, 58, 2021].  Based on the stable
  decomposition on the auxiliary spaces, we propose both the additive
  and multiplicative preconditoners which converge uniformly in the sense
  that the resulting condition number is independent of both the
  number of degrees of freedom and the parameter $\lambda$ in Cordes
  condition. Numerical experiments are carried out to illustrate the
  efficiency of the proposed preconditioners. 
\end{abstract}

\begin{keywords}
Non-divergence form, Hamilton-Jacobi-Bellman, Cordes condition, $C^0$
finite element methods, auxiliary space precondition
\end{keywords}


\section{Introduction}
Let $\Omega$ be a bounded, open, convex polytopal domain in
$\mathbb{R}^{d}$, where $d = 2,3$ represent the dimension. In this
paper, we are interested in the Hamilton--Jacobi--Bellman (HJB)
equations of the following type:
\begin{equation} \label{eq:HJB} 
\sup_{\alpha \in \Lambda} (L^\alpha u - f^\alpha) = 0 
\quad \text{in }\Omega, \qquad 
u = 0 \quad \text{on }\partial\Omega,
\end{equation}
where $\Lambda$ is a compact metric space, and  
$$ 
L^\alpha v := A^\alpha : D^2 v + \bm{b}^\alpha \cdot \nabla
v - c^\alpha v.
$$ 
Here, $D^2u$ and $\nabla u$ denote the Hessian and gradient of
real-valued function $u$, respectively.  The coefficient $ A^{\alpha}
\in C(\overline{\Omega} \times \Lambda; \mathbb{R}^{d\times d}) $ is
assumed to be uniformly elliptic, i.e., there exist constants $
\overline{\nu}, \underline{\nu} > 0 $ such that 
\begin{equation}\label{eq:HJB-elliptic}  
\underline{\nu} |\boldsymbol{\xi}|^{2} \leq
  \boldsymbol{\xi}^{t}A^{\alpha}(x)\boldsymbol{\xi} \leq
  \overline{\nu} |\boldsymbol{\xi}|^{2} \qquad \forall\boldsymbol{\xi}
  \in \mathbb{R}^{d}, \text{ a.e. in } \Omega, ~\forall \alpha \in
  \Lambda.
\end{equation} 
Further, $\boldsymbol{b}^{\alpha} \in C(\overline{\Omega} \times
\Lambda; \mathbb{R}^{d}) $ and $c^\alpha \geq 0, f^\alpha \in
C(\overline{\Omega} \times \Lambda; \mathbb{R})$.

The HJB equations arise in many applications including stochastic
optimal control, game theory, and mathematical finance
\cite{fleming2006controlled}. In \cite{maugeri2000elliptic,
smears2014discontinuous}, the HJB equations are shown to admit $H^2$
strong solutions under the following
Cordes condition.   
\begin{definition}[Cordes condition for \eqref{eq:HJB}]\label{lm:corde-HJB} 
The coefficients satisfy that there exist $ \lambda > 0 $ and $
  \varepsilon \in (0,1] $ such that 
\begin{equation}\label{eq:cordes-HJB} 
\displaystyle\frac{|A^{\alpha}|^{2} +
  |\boldsymbol{b}^{\alpha}|^{2}/2\lambda +
  (c^{\alpha}/\lambda)^{2}}{(\operatorname{tr}A^{\alpha} +
  c^{\alpha}/\lambda )^{2}} \leq \displaystyle\frac{1}{d + \varepsilon}
  \qquad \text{a.e. in }\Omega, ~\forall \alpha \in \Lambda.
\end{equation} 
\end{definition} 

In the past decade, several studies have been taken on the finite
element approximation of $H^2$ strong solutions of the HJB equations
with Cordes coefficients \eqref{eq:cordes-HJB}.  
The first discontinuous Galerkin (DG) method was proposed in
\cite{smears2014discontinuous}, which has been extended to the
parabolic HJB equations in \cite{smears2016discontinuous}.  The
$C^0$-interior penalty DG methods were developed in
\cite{neilan2019discrete}.  A mixed
method based on the stable finite element Stokes spaces was proposed
in \cite{gallistl2019mixed}.  Recently, the $C^0$ (non-Lagrange)
finite element method with no stabilization parameter was proposed in
\cite{wu2021c0}, where the element is required to be $C^1$-continuous
at $(d-2)$-dimensional subsimplex, e.g., $\mathcal{P}_k$-Hermite
family $(k\geq 3)$ in 2D and $\mathcal{P}_k$-Argyris family $(k \geq 5)$
\cite{neilan2015discrete, christiansen2018nodal} in 3D. 
The above discretizations can be naturally applied to the linear
elliptic equations in non-divergence form
\cite{smears2013discontinuous, kawecki2019dgfem, neilan2019discrete,
gallistl2019mixed, wu2021c0}. Other related topics include the unified
analysis of DGFEM and $C^0$-IPDG \cite{kawecki2020unified}, and the
adaptivity of $C^0$-IPDG \cite{brenner2020adaptive,
kawecki2020convergence}.

For all these discretizations, the discrete well-posedness is analysed
under the broken $H^2$-norm with possible jump terms across the
boundary.  This, after linearization, leads to the ill-conditioned
systems with condition number $\mathcal{O}(h^{-4})$ on quasi-uniform
meshes, where $h$ represents the mesh size.  Due to the similar
performance to the discrete system for fourth-order problems, it is
conceivable that the linearised system from HJB equations can be
effectively solved by the solvers for fourth-order problems, e.g.,
geometric multigrid \cite{peisker1987conjugate, brenner1989optimal,
brenner1999convergence, stevenson2003analysis, carstensen2021hierarchical} or domain decomposition
\cite{zhang1994multilevel, brenner1996two}.  In
\cite{smears2018nonoverlapping}, the nonoverlapping domain
decomposition preconditioner was studied for the DGFEM discretization
of HJB equations.

Traditional geometric multigrid methods depend crucially on the
multilevel structures of underlying grids. On unstructured grids, the
more user-friendly option is the algebraic multigrid method (AMG) that
have been extensively studied for the second-order equations.  In
\cite{peisker1988numerical}, the first biharmonic equation was
converted to a Poisson system based on the boundary operator
proposed in \cite{glowinski1979numerical}.  Under the framework of
auxiliary space preconditioning \cite{xu1996auxiliary}, Zhang and Xu
\cite{zhang2014optimal} proposed a class of optimal solvers based on
the auxiliary discretization of mixed form for the fourth-order
problems.  As a generalization of \cite{peisker1987conjugate,
peisker1988numerical, peisker1990iterative}, it works for a variety of
conforming and nonconforming finite element discretizations on both
convex and nonconvex domains with unstructured triangulation.  

The propose of this work is to study the auxiliary space
preconditioner to the $C^0$ finite element discretization of HJB
equations. More specifically, the numerical scheme for fully nonlinear HJB
equations leads to a discrete nonlinear problem that can be solved
iteratively by a semi-smooth Newton method
\cite{smears2014discontinuous, neilan2019discrete, wu2021c0}. The linear system obtained
from the semi-smooth Newton linearization are generally non-symmetric
but coercive. To handle the non-symmetry, the existing GMRES theory
\cite{elman1982iterative} will lead to a guaranteed minimum
convergence rate with a symmetric FOV-equivalent preconditioner
$P_{\lambda, h}$ that satisfies \eqref{eq:norm-equivalence}. The
construction of $P_{\lambda,h}$ under the auxiliary preconditioning
framework follows two steps:
\begin{enumerate}
\item Construct appropriate auxiliary spaces and corresponding
  transfer operators mapping functions from original space to the
    auxiliary spaces;
\item Devise solvers on auxiliary spaces so that the bounds in
  \eqref{eq:norm-equivalence} are uniform with respect to both $h$ and
    the parameter $\lambda$ in the Cordes condition. 
\end{enumerate}
Based on the stable decomposition for auxiliary spaces, both additive
and multiplicative preconditoners are shown to be efficient and
$\lambda$-uniform for the linearised system. Further, the
precondtioners only involve the Poisson-like solver which can be
efficiently solved by AMG with nearly optimal complexity.


In general cases, the auxiliary space preoconditioner is additive
\cite{xu1996auxiliary, zhang2014optimal, grasedyck2016nearly}, which
usually leads to a stable but relatively large condition number in
practical applications.  The first contribution of this work is the
construction and analysis of a multiplicative preconditioner based on
the specific structure of auxiliary spaces.  Having a coarse subspace,
the symmetrized two-level multiplicative precondition was shown to be
positive definite provided that the smoother on the fine level has
contraction property \cite{holst1997schwarz}. The condition number
estimate of multiplicative precondition at the matrix level can be
found in \cite{notay2013further}.  In this work, we show that the
contracted smoother together with the stable decomposition for
auxiliary spaces leads to a robust multiplicative precondititoner,
which is also numerical verified with better performance than the
additive version. 

The parameter $\lambda$ in the Cordes condition balances the diffusion
and the constant term. We emphasis that this parameter is not involved
in the monotonicity constant \eqref{eq:HJB-monotonicity}, which makes
it possible to consider the preconditioner with uniformity on
$\lambda$.  In this work, we carefully define the norm on the
auxiliary space so that the induced preconditioner is
uniform with respect to $\lambda$. Although the preconditioner is
designed for the $C^0$ finite element approximation, a similar idea
can be applied to other discretizations. 

The rest of the paper is organized as follows. In Section
\ref{sec:pre}, we establish the notation and state some preliminaries
results.  In Section \ref{sec:fov}, we apply the FOV-equivalence
preconditioner for the linear system, which can be used to solve
non-symmetric systems appearing in applications to the HJB
equations. In Section \ref{sec:solver}, we construct both the additive
and multiplicative auxiliary space preconditioners. We also show that
the condition numbers of the preconditioned systems are uniformly
bounded with the stable decomposition assumption, which is verified in
Section \ref{sec:analysis}. Several numerical experiments are
presented in Section \ref{sec:numerical} to illustrate the theoretical
results.

For convenience, we use $C$ to denote a generic positive constant
which may depend on $ \Omega $, share regularity of mesh and
polynomial degree, but is independent of the mesh size $ h $. The
notation $ X \lesssim Y $ means $ X \leq CY $. $ X \simeq Y $ means
$ X\lesssim Y $ and $ Y \lesssim X $. 

\section{Preliminaries} \label{sec:pre}
In this section, we first review the $H^2$ strong solutions to the HJB
equations \eqref{eq:HJB} under the Cordes condition
\eqref{eq:cordes-HJB}. Then we give a brief statement about the $C^0$
finite element scheme in \cite{wu2021c0}. 

Given an integer $ k\geq 0 $, let $ H^{k}(\Omega) $ and $
H^{k}_{0}(\Omega) $ be the usual Sobolev spaces, $
\|\cdot\|_{H^{k}(\Omega)} $ and $ |\cdot|_{H^{k}(\Omega)} $ denote the
Sobolev norm and semi-norm. We also denote $ V = H^{2}(\Omega)\cap
H^{1}_{0}(\Omega) $. For any Hilbert space $ X $, we denote $
X^{\prime} $ for the dual space of $ X $, and $\langle \cdot,
\cdot \rangle$ for the corresponding dual pair. We also denote
$|\cdot|$ as the Euclidian norm for vectors and the Frobenius norm for
matrices.

\subsection{\texorpdfstring{$H^2$}{H2} strong solutions to the HJB equations} We now invoke
the theory of $H^2$ strong solutions of the HJB equations.  In view of
the Cordes condition \eqref{eq:cordes-HJB}, for each
$\alpha \in \Lambda$,  define 
\begin{equation}\label{eq:gamma-HJB} 
\gamma^{\alpha} := \displaystyle\frac{ \operatorname{tr}A^{\alpha} +
  c^{\alpha}/\lambda }{|A^{\alpha}|^{2} +
  |\boldsymbol{b}^{\alpha}|^{2}/2\lambda + (c^{\alpha}/\lambda)^{2} }. 
\end{equation} 
And for $ \lambda $ as in \eqref{eq:cordes-HJB},  define a linear
operator $ L_{\lambda}: H^{2}(\Omega) \to L^{2}(\Omega) $ by 
\begin{equation}\label{eq:L-lambda} 
L_{\lambda} u: = \Delta u - \lambda u \qquad u \in H^{2}(\Omega). 
\end{equation} 
Next, we define the operator $F_{\gamma}: H^{2}(\Omega) \to
L^{2}(\Omega)$ by 
\begin{equation}\label{eq:F-gamma} 
F_{\gamma}[u] := \sup_{\alpha \in \Lambda} \{ \gamma^{\alpha}
  L^{\alpha}u - \gamma^{\alpha} f^{\alpha} \}.
\end{equation} 
Note that the continuity of data implies $\gamma^{\alpha} \in
C(\overline{\Omega}\times \Lambda; \mathbb{R})$. As a consequence, it
is readily seen that the HJB equation \eqref{eq:HJB} is equivalent to the problem $
F_{\gamma}[u] = 0$ in $\Omega$, and $u = 0$ on $\partial
\Omega$. The Cordes condition leads to the following lemma; See
\cite[Lemma 1]{smears2014discontinuous} for a proof. 

\begin{lemma}[property of Cordes condition]\label{lm:cordes-HJB-lemma}
  Under the Cordes condition \eqref{eq:cordes-HJB}, for any open set $
  U \subset \Omega$ and $ w,v \in H^{2}(U)$, $ z = w -v $, the
  following inequality holds a.e. in $ U $:
\begin{equation}\label{eq:cordes-HJB-lemma} 
  |F_{\gamma}[w] - F_{\gamma}[v] - L_{\lambda}z| \leq \sqrt{1 -
  \varepsilon} \sqrt{|D^{2} z |^{2} + 2\lambda|\nabla z|^{2} +
  \lambda^{2} z^{2}}. 
\end{equation} 
\end{lemma}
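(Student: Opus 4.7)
The plan is to reduce the estimate to a pointwise Cauchy--Schwarz bound on $\gamma^{\alpha}L^{\alpha}z - L_{\lambda}z$ for each fixed $\alpha$, and then verify that the resulting coefficient factor is controlled by $1-\varepsilon$ via the Cordes condition \eqref{eq:cordes-HJB}.

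First I would use the elementary bound $|\sup_\alpha g_\alpha - \sup_\alpha h_\alpha| \leq \sup_\alpha |g_\alpha - h_\alpha|$. Since $L_\lambda z$ does not depend on $\alpha$, I may freely absorb it into one of the suprema, writing
\begin{equation*}
F_\gamma[w] - F_\gamma[v] - L_\lambda z = \sup_{\alpha}\bigl(\gamma^\alpha L^\alpha w - \gamma^\alpha f^\alpha\bigr) - \sup_{\alpha}\bigl(\gamma^\alpha L^\alpha v - \gamma^\alpha f^\alpha + L_\lambda z\bigr),
\end{equation*}
so that the difference of arguments in the two suprema is exactly $\gamma^\alpha L^\alpha z - L_\lambda z$. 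This reduces the lemma to proving the pointwise inequality
\begin{equation*}
|\gamma^\alpha L^\alpha z - L_\lambda z| \leq \sqrt{1-\varepsilon}\,\sqrt{|D^2 z|^2 + 2\lambda|\nabla z|^2 + \lambda^2 z^2}
\end{equation*}
uniformly in $\alpha \in \Lambda$, a.e.\ in $U$.

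Next I would expand, using $\Delta z = I : D^2 z$, as
\begin{equation*}
\gamma^\alpha L^\alpha z - L_\lambda z = (\gamma^\alpha A^\alpha - I) : D^2 z + \gamma^\alpha \bm{b}^\alpha \cdot \nabla z + (\lambda - \gamma^\alpha c^\alpha) z,
\end{equation*}
and write each term as a pairing between a coefficient and a piece of the weighted ``vector'' $\bigl(D^2 z,\sqrt{2\lambda}\,\nabla z,\lambda z\bigr)$, namely $(\gamma^\alpha A^\alpha - I)$, $\gamma^\alpha \bm{b}^\alpha/\sqrt{2\lambda}$, $(\gamma^\alpha c^\alpha/\lambda - 1)$. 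Cauchy--Schwarz then yields
\begin{equation*}
|\gamma^\alpha L^\alpha z - L_\lambda z|^2 \leq \mathcal{C}^\alpha \cdot \bigl(|D^2 z|^2 + 2\lambda|\nabla z|^2 + \lambda^2 z^2\bigr),
\end{equation*}
where $\mathcal{C}^\alpha := |\gamma^\alpha A^\alpha - I|^2 + |\gamma^\alpha \bm{b}^\alpha|^2/(2\lambda) + (\gamma^\alpha c^\alpha/\lambda - 1)^2$.

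The remaining step, and the main algebraic core of the argument, is to show $\mathcal{C}^\alpha \leq 1 - \varepsilon$. Expanding the squares, the cross terms combine into $-2\gamma^\alpha(\tr A^\alpha + c^\alpha/\lambda)$, while the constant terms sum to $d + 1$; the quadratic-in-$\gamma^\alpha$ terms assemble into $(\gamma^\alpha)^2\bigl(|A^\alpha|^2 + |\bm b^\alpha|^2/(2\lambda) + (c^\alpha/\lambda)^2\bigr)$. Substituting the definition \eqref{eq:gamma-HJB} collapses this to
\begin{equation*}
\mathcal{C}^\alpha = (d+1) - \gamma^\alpha (\tr A^\alpha + c^\alpha/\lambda) = (d+1) - \frac{(\tr A^\alpha + c^\alpha/\lambda)^2}{|A^\alpha|^2 + |\bm b^\alpha|^2/(2\lambda) + (c^\alpha/\lambda)^2}.
\end{equation*}
The Cordes condition \eqref{eq:cordes-HJB} bounds the ratio above by $1/(d+\varepsilon)$ from above, so the reciprocal is $\geq d+\varepsilon$, and hence $\mathcal{C}^\alpha \leq (d+1) - (d+\varepsilon) = 1-\varepsilon$. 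I expect the main obstacle to be this final algebraic simplification: choosing the right weighted grouping before Cauchy--Schwarz is what produces the exact cancellation with the Cordes bound, and getting the weights $2\lambda$ and $\lambda^2$ consistent with the definition of $\gamma^\alpha$ is essentially the whole content of the lemma.
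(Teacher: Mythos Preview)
Your proof is correct and is precisely the argument the paper defers to: the paper does not give its own proof but cites \cite[Lemma 1]{smears2014discontinuous}, and your reduction via $|\sup_\alpha g_\alpha-\sup_\alpha h_\alpha|\le\sup_\alpha|g_\alpha-h_\alpha|$ followed by the weighted Cauchy--Schwarz and the algebraic identity $\mathcal{C}^\alpha=(d+1)-\gamma^\alpha(\tr A^\alpha+c^\alpha/\lambda)$ is exactly that proof.
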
 

Another key ingredient for the well-posedness of \eqref{eq:HJB} is the
Miranda-Talenti estimate stated as follows. 
\begin{lemma}[Miranda-Talenti estimate, \cite{grisvard2011elliptic,
maugeri2000elliptic}]\label{lm:M-T}
  Suppose $ \Omega $ is a bounded convex domain in $ \mathbb{R}^{d} $.
  Then, for any $ v \in V = H^{2}(\Omega) \cap H^{1}_{0}(\Omega) $, 
\begin{equation}\label{eq:M-T} 
 | v |_{H^{2}(\Omega)} \leq \|\Delta v\|_{L^{2}(\Omega)} \leq C | v
  |_{H^{2}(\Omega)},  
\end{equation} 
where the constant $C$ depends only on the dimension. 
\end{lemma}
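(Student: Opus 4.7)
The plan is to establish the two inequalities separately, the upper bound being a trivial pointwise estimate and the lower bound being the nontrivial Miranda--Talenti identity.

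For the upper bound I would argue pointwise: writing $\Delta v = I : D^2 v$ where $I$ is the identity matrix, the Cauchy--Schwarz inequality in the Frobenius inner product on $d\times d$ matrices gives $(\Delta v)^2 \leq |I|^2 |D^2 v|^2 = d\,|D^2 v|^2$ a.e., so integration over $\Omega$ yields $\|\Delta v\|_{L^2(\Omega)}^2 \leq d\, |v|_{H^2(\Omega)}^2$. This needs no assumption on $\Omega$ beyond measurability.

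For the lower bound I would proceed in three stages. First, assume $\Omega$ is $C^\infty$ and bounded convex, and $v \in C^\infty(\overline{\Omega}) \cap H^1_0(\Omega)$. Two applications of integration by parts yield the componentwise identity
\begin{equation*}
\int_\Omega \partial_{ii} v\, \partial_{jj} v \, dx = \int_\Omega (\partial_{ij} v)^2 \, dx + \int_{\partial \Omega} \bigl[\partial_i v\, \partial_{jj} v\, \nu_i - \partial_{ij} v\, \partial_i v\, \nu_j\bigr] ds,
\end{equation*}
which after summing over $i,j$ gives
\begin{equation*}
\|\Delta v\|_{L^2(\Omega)}^2 = |v|_{H^2(\Omega)}^2 + \int_{\partial \Omega} \bigl[(\nabla v\cdot\nu)\Delta v - (D^2 v)\nu\cdot\nabla v\bigr] ds.
\end{equation*}
Since $v=0$ on $\partial\Omega$, the tangential derivatives of $v$ vanish, so $\nabla v = (\partial_\nu v)\nu$ on $\partial\Omega$, and a direct computation (differentiating the identity $v\equiv 0$ on $\partial\Omega$ along tangent directions) rewrites the boundary integrand as $\mathrm{II}(\nabla v,\nabla v) = (\partial_\nu v)^2\,\mathrm{II}(\nu,\nu)$, where $\mathrm{II}$ denotes the second fundamental form of $\partial\Omega$. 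Convexity of $\Omega$ forces $\mathrm{II}\geq 0$, and we conclude $|v|_{H^2(\Omega)}^2 \leq \|\Delta v\|_{L^2(\Omega)}^2$.

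Second, a density argument extends this to all $v\in H^2(\Omega)\cap H^1_0(\Omega)$ when $\Omega$ is smooth convex. Third, for the general (possibly polytopal) convex domain I would approximate $\Omega$ from inside by a sequence of smooth bounded convex domains $\Omega_n \nearrow \Omega$, apply the inequality to $v|_{\Omega_n}$, and pass to the limit using monotone convergence. The main obstacle is the third stage: the boundary-term identity above is not directly available on polytopal $\Omega$ because the boundary fails to be $C^2$ (edges and vertices carry no well-defined second fundamental form), so one must rely on interior smooth convex approximation rather than working on $\partial\Omega$ itself. This smooth-approximation technique, together with the reference to \cite{grisvard2011elliptic,maugeri2000elliptic}, completes the proof.
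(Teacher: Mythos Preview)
The paper itself does not prove this lemma; it merely cites \cite{grisvard2011elliptic,maugeri2000elliptic} and moves on. So there is no paper proof to compare against, and your task reduces to producing a correct self-contained argument.

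Your upper bound and stages one and two of the lower bound are fine (modulo a minor abuse of notation: the second fundamental form $\mathrm{II}$ acts on \emph{tangent} vectors, so writing $\mathrm{II}(\nu,\nu)$ is nonstandard; what you actually obtain is $H\,(\partial_\nu v)^2$ with $H$ the mean curvature, which has the correct sign by convexity). The genuine gap is in stage three. You propose to ``apply the inequality to $v|_{\Omega_n}$'' for smooth convex $\Omega_n\nearrow\Omega$, but $v|_{\Omega_n}$ is \emph{not} in $H^1_0(\Omega_n)$: the function $v$ vanishes on $\partial\Omega$, not on $\partial\Omega_n$. Consequently the smooth-domain inequality from stage two simply does not apply to $v|_{\Omega_n}$, and the monotone-convergence step never gets off the ground.

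The standard fix (this is essentially what \cite{grisvard2011elliptic} does) is to work with the Dirichlet problem rather than with $v$ directly: given $v\in H^2(\Omega)\cap H^1_0(\Omega)$, set $f=\Delta v\in L^2(\Omega)$ and on each $\Omega_n$ solve $\Delta v_n=f$ in $\Omega_n$, $v_n=0$ on $\partial\Omega_n$. Stage two then gives $|v_n|_{H^2(\Omega_n)}\le\|f\|_{L^2(\Omega_n)}\le\|f\|_{L^2(\Omega)}$. Extending $v_n$ by zero yields a bounded sequence in $H^1_0(\Omega)$ whose weak limit solves $\Delta u=f$, $u=0$ on $\partial\Omega$; uniqueness forces $u=v$, and the uniform $H^2$ bound on compact subsets plus a lower-semicontinuity argument delivers $|v|_{H^2(\Omega)}\le\|\Delta v\|_{L^2(\Omega)}$. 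You should replace your stage three with this construction.
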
 

Let the operator $ M:V \to V^{\prime} $ be 
\begin{equation}\label{def:HJB-var} 
\langle M[w], v \rangle_{} := \displaystyle\int_{\Omega}^{}
  F_{\gamma}[w] L_{\lambda}v \mathrm{d}x. 
\end{equation} 
By using Miranda-Talenti estimate \eqref{eq:M-T} and Cordes condition
\eqref{eq:cordes-HJB}, one can show the strong monotonicity of $ M$,
\begin{equation*} \label{eq:HJB-monotonicity}
\langle M[v], v \rangle \geq (1 - \sqrt{1-\varepsilon}) \|v\|_{\lambda}^2
  \qquad \forall v \in V,
\end{equation*}
where $\|v\|_\lambda^2 := \|D^2 v\|_{L^2(\Omega)}^2 + 2\lambda\|\nabla
v\|_{L^2(\Omega)}^2 + \lambda^2\|v\|_{L^2(\Omega)}^2$.
Together with the Lipschitz continuity of $M$, the compactness of
$\Lambda$ and the Browder-Minty Theorem \cite[Theorem
10.49]{renardy2006introduction}, one can show the existence and
uniqueness of the following problem: Find $u \in V$ such that  
\begin{equation}\label{eq:HJB-var}
\langle M[u], v \rangle = 0 \qquad \forall v \in V.
\end{equation}
We refer to \cite[Theorem 3]{smears2014discontinuous} for a detailed
proof. 

\subsection{\texorpdfstring{$C^0$}{C0} finite element approximations
of the HJB equations}
Let $ \mathcal{T}_{h} $ be a conforming shape regular simplicial
triangulation of polytope $\Omega$ and $\mathcal{F}_{h}$ be the set of
all faces of $\mathcal{T}_{h}$. $\mathcal{F}^{i}_{h} :=
\mathcal{F}_{h} \backslash \partial \Omega$ and $
\mathcal{F}^{\partial}_{h} := \mathcal{F}_{h} \cap \partial \Omega$.
Let $ \mathcal{N}_{h} $ be the set of all the nodes of $
\mathcal{T}_{h} $. Here $ h = \max_{T \in \mathcal{T}_{h}} h_{T} $,
where $ h_{T} $ is the diameter of $ T \in \mathcal{T}_{h} $. We also
denote $ h_{F} $ as the diameter of $ F \in \mathcal{F}_{h} $. For  $
F \in \mathcal{F}_{h} $ and $ T \in \mathcal{T}_{h} $, we use $ (
\cdot, \cdot )_{T}  $ , respectively $ \langle \cdot, \cdot
\rangle_{F}  $, to denote the $ L^{2} $-inner product over $ T $,
respectively $ F $. 

Following \cite{wu2021c0}, we adopt the $ \mathcal{P}_{k} $-Hermite
finite elements ($ k \geq 3 $)  in 2D and $ \mathcal{P}_{k} $-Argyris
finite elements in 3D to solve the HJB equations \eqref{eq:HJB}.
Define the finite element spaces $ V_{h} $ as 
    \begin{enumerate}
    \item For $d = 2$, with $k \geq 3$ (cf. Fig. \ref{fig:Hermite-2D}), 
    \begin{equation*} \label{eq:FEM-Hermite}
    V_h := \{v \in H_0^1(\Omega): v|_T \in \mathcal{P}_k(T), \forall T \in
    \mathcal{T}_h, ~v \text{ is }C^1 \text{ at all vertices}\},
    \end{equation*}
    \item For $d=3$, with $k \geq 5$ (cf. Fig. \ref{fig:Argyris-3D}), 
    \begin{align} \label{eq:FEM-Argyris}
    V_h := \{v \in H_0^1(\Omega): v|_T \in \mathcal{P}_k(T), \forall T \in
    \mathcal{T}_h, ~& v \text{ is
    }C^1 \text{ on all edges}, \nonumber \\
    & v \text{ is }C^2 \text{ at all vertices}\}, \nonumber
    \end{align}
    \end{enumerate}
where $ \mathcal{P}_{k}(T) $ denotes set of the polynomials of degree
$k$ on $T$. 
\begin{figure}[!htbp]
  \centering 
  \captionsetup{justification=centering}
  \subfloat[2D Hermite element, $k=3$]{
    \includegraphics[width=0.24\textwidth]{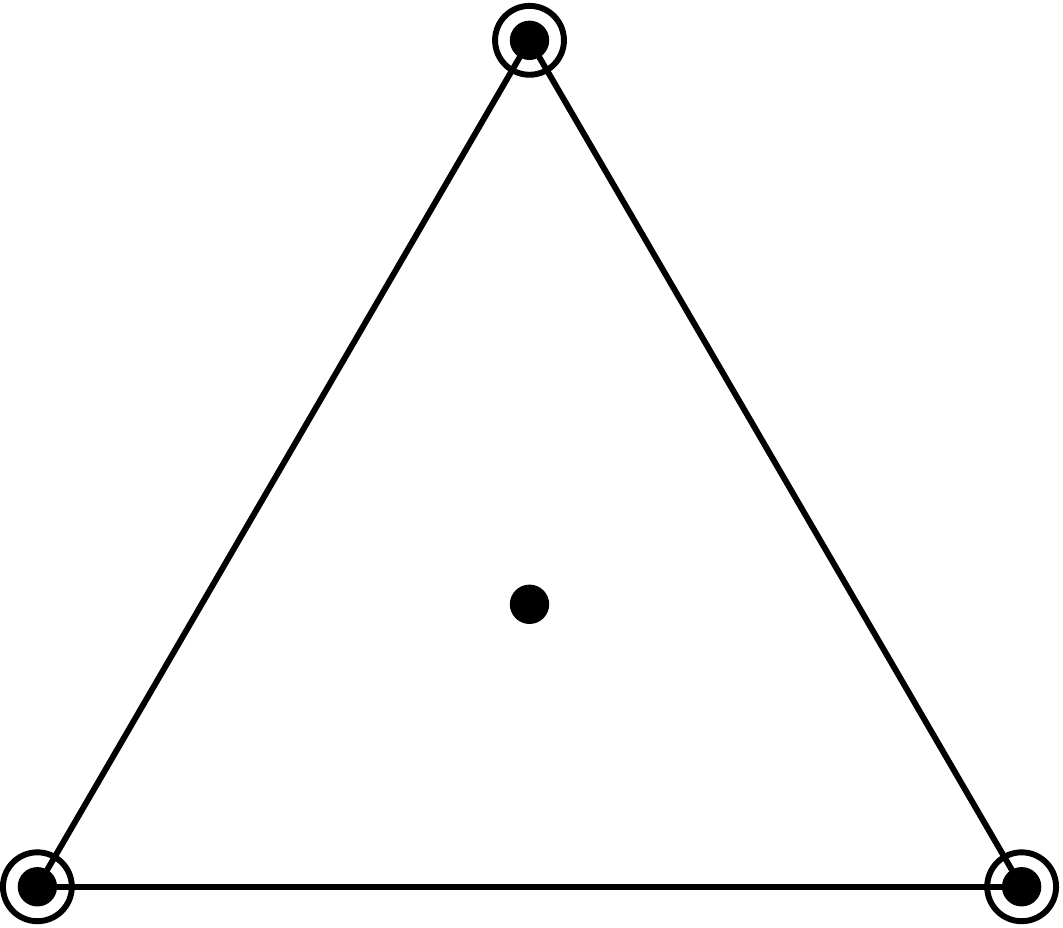}
    \label{fig:Hermite-2Dk3}
  }\qquad %
  \subfloat[2D Hermite element, $k=4$]{
    \includegraphics[width=0.24\textwidth]{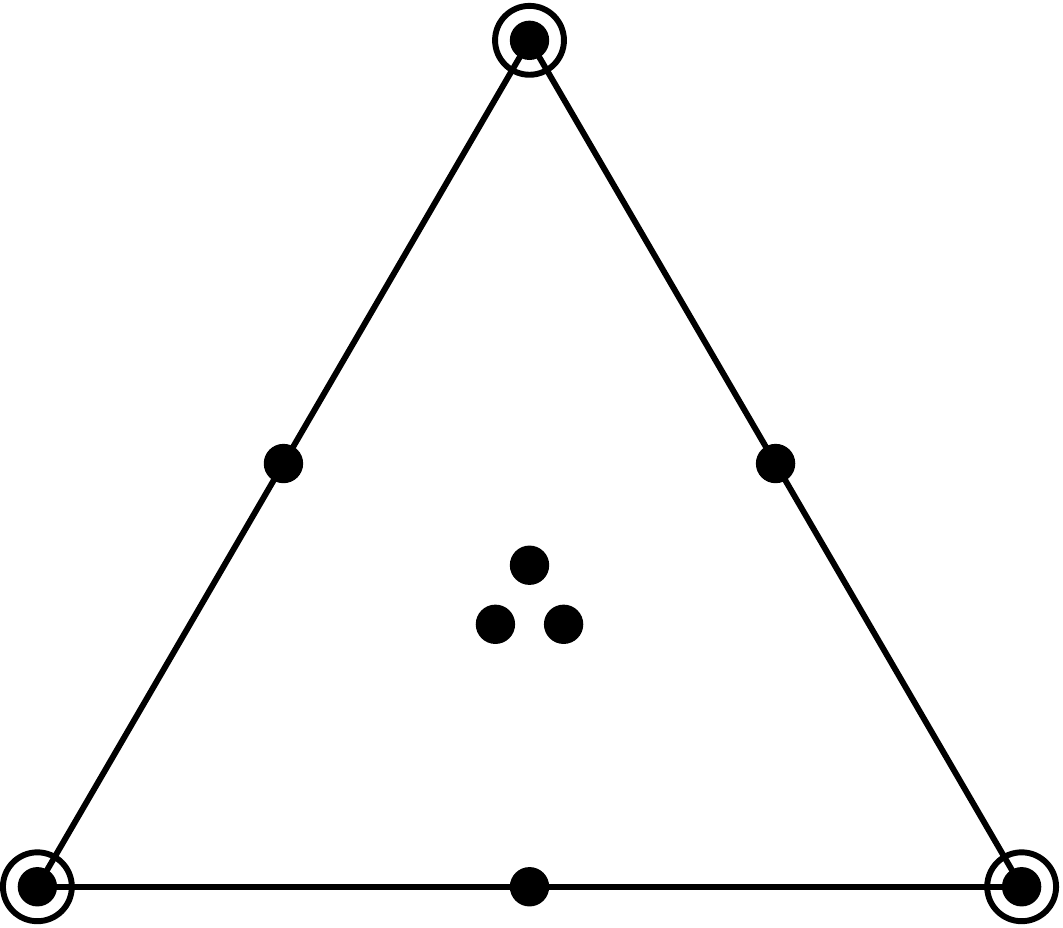}
    \label{fig:Hermite-2Dk4}
  } 
  \caption{Degrees of freedom of 2D $\mathcal{P}_k$ Hermite elements, in
    the case of $k=3$ and $k=4$}
  \label{fig:Hermite-2D}
  \end{figure}
  
  \begin{figure}[!htbp]
    \centering 
    \captionsetup{justification=centering}
    \subfloat[3D Argyris elements, $k=5$]{
      \includegraphics[width=0.28\textwidth]{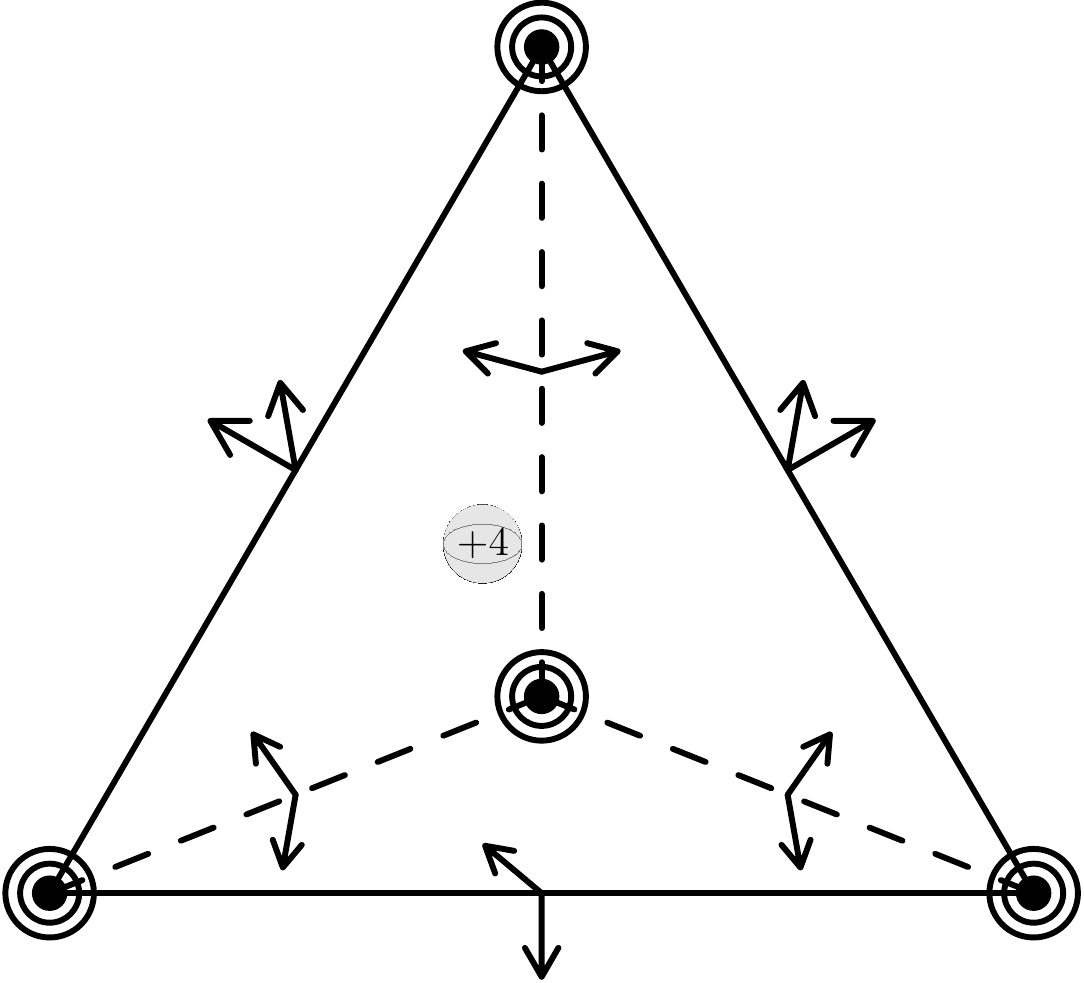}
      \label{fig:Argyris-3Dk5}
    }\qquad %
    \subfloat[3D Argyris elements, $k=6$]{
      \includegraphics[width=0.28\textwidth]{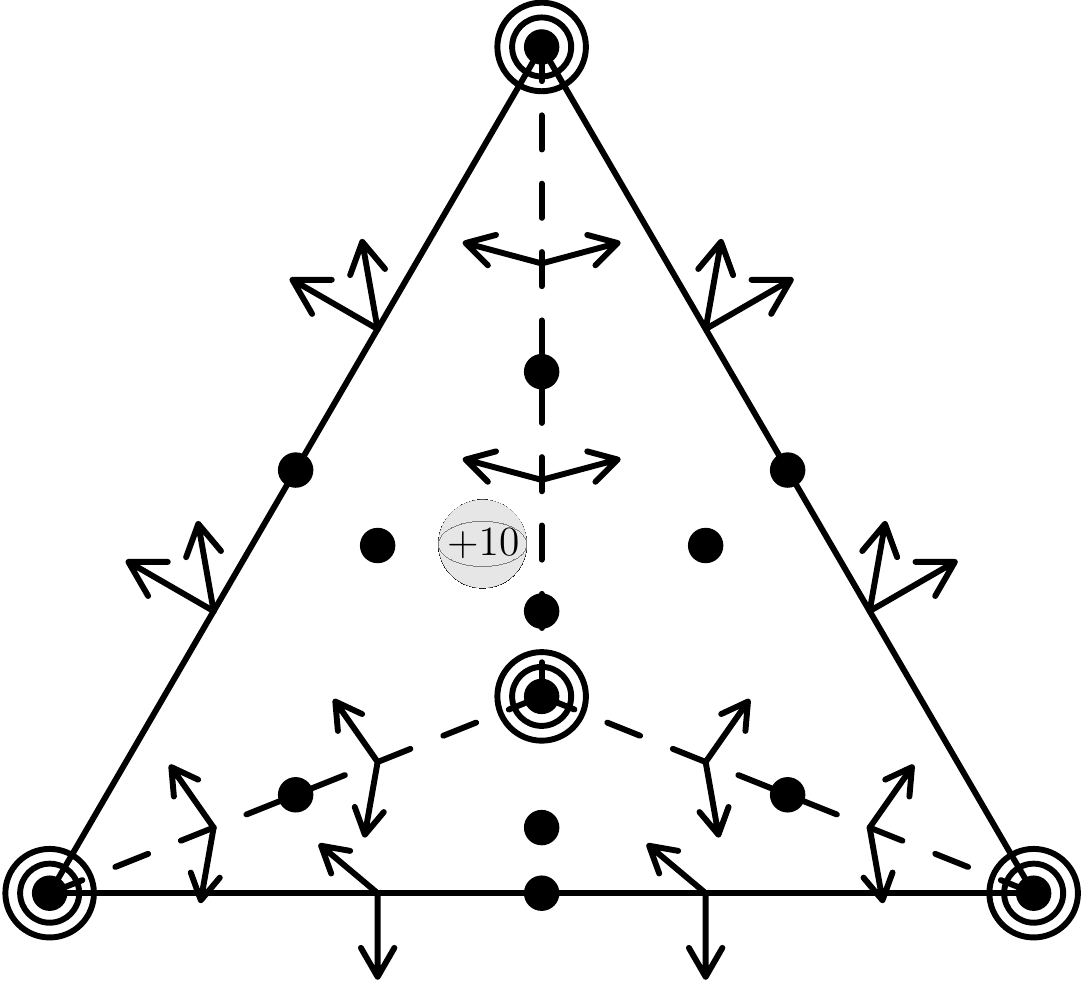}
      \label{fig:Argyris-3Dk6}
    } 
    \caption{Degrees of freedom of 3D $\mathcal{P}_k$ Argyris elements, in
      the case of $k=5$ and $k=6$}
    \label{fig:Argyris-3D}
    \vspace{-4mm}
    \end{figure}
    
For each $F\in\mathcal{F}^{i}_{h}$, we define the tangential
Laplace operator $ \Delta_{T}:H^{s}(F) \to H^{s-2}(F) $ as
follows, where $ s \geq 2$. Let $ \{\boldsymbol{t}_{i}\}^{d-1}_{i = 1}
$ be a orthogonal coordinate system on $F$. Then, for $ w \in
H^{s}(F) $ define
\begin{equation*}\label{eq:tan-laplace} 
\Delta_{T} w = \displaystyle\sum_{i = 1}^{d-1} \frac{\partial
  ^{2}}{\partial \boldsymbol{t}_{i}^{2}} w. 
\end{equation*} 
Next, we define the jump of a vector function $ \boldsymbol{v} $ on an
interior face $ F = \partial T^{+} \cap \partial T^{-} $  as follows: 
\begin{equation*}\label{def:jump-vector} 
    \left. \llbracket \boldsymbol{v} \rrbracket \right|_{F} := \boldsymbol{v}^{+}\cdot\boldsymbol{n}^{+}|_{F} + \boldsymbol{v}^{-}\cdot\boldsymbol{n}^{-}|_{F} ,
\end{equation*} 
where $ \boldsymbol{v}^{\pm} = \boldsymbol{v}|_{T^{\pm}} $ and $
\boldsymbol{n}^{\pm} $ is the unit outward normal vector of $ T^{\pm}
$, respectively. For scaler function $ w $ we define 
\begin{equation*}\label{def:jump-scaler} 
\llbracket w \rrbracket := w|_{T^{+}} - w|_{T^{-}}. 
\end{equation*} 
The following lemma is critical in the design and analysis of 
finite element approximation of HJB equations \eqref{eq:HJB}. 

\begin{lemma}[discrete Miranda-Talenti identity,
  \cite{wu2021c0}]\label{lm:discrete-M-T}
  Let $ \Omega \subset \mathbb{R}^{d} $ be a convex polytopal domain
  and $ \mathcal{T}_{h} $ be a conforming triangulation. For each $
  v_{h} \in V_{h} $, it holds that
\begin{equation*}\label{eq:discrte-M-T} 
\displaystyle\sum_{T \in \mathcal{T}_{h}}^{}\|\Delta
  v_{h}\|^{2}_{L^{2}(T)} = \displaystyle\sum_{T \in
  \mathcal{T}_{h}}^{} \|D^{2} v_{h}\|^{2}_{L^{2}(T)} +
  2\displaystyle\sum_{F \in \mathcal{F}^{i}_{h}}^{}\langle \llbracket
  \nabla v_{h} \rrbracket, \Delta_{T} v_{h} \rangle_{F}.  
\end{equation*} 
\end{lemma}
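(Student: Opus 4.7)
The plan is to derive the identity element-by-element via two integrations by parts---one turning the volume difference into a surface integral on $\partial T$, one tangential along each face $F\subset\partial T$---and then to assemble. Since $v_h|_T$ is a polynomial, all elementwise manipulations are classical, so the entire difficulty lies in identifying which boundary pieces survive after the sum over $\mathcal{T}_h$.

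\textbf{Step 1 (elementwise identity).} Two integrations by parts on $T$ yield
\[
\int_T (\Delta v_h)^2 - |D^2 v_h|^2 \,\mathrm{d}x
= \int_{\partial T}\bigl[\partial_n v_h\,\Delta v_h - (D^2 v_h\,n)\cdot\nabla v_h\bigr]\,\mathrm{d}s.
\]
On any (flat) face $F$, splitting $\nabla v_h = \partial_n v_h\,n + \nabla_T v_h$ and $\Delta v_h = \partial_{nn} v_h + \Delta_T v_h$ cancels the $\partial_n v_h\,\partial_{nn} v_h$ contributions and leaves $\partial_n v_h\,\Delta_T v_h - (D^2 v_h\,n)\cdot\nabla_T v_h$. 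Because $n$ is constant along $F$, a short calculation gives $(D^2 v_h\,n)\cdot\nabla_T v_h = \nabla_T(\partial_n v_h)\cdot\nabla_T v_h$; applying the tangential divergence theorem on $F$ to $\partial_n v_h\,\nabla_T v_h$ then turns this into $\int_F \partial_n v_h\,\Delta_T v_h\,\mathrm{d}s$ plus a $(d-2)$-dimensional remainder on $\partial F$. Assembling over $F\subset\partial T$ yields
\[
\int_T (\Delta v_h)^2 - |D^2 v_h|^2\,\mathrm{d}x
= 2\sum_{F\subset\partial T}\int_F \partial_n v_h\,\Delta_T v_h\,\mathrm{d}s
- \sum_{F\subset\partial T}\int_{\partial F} \partial_n v_h\,(\nabla_T v_h\cdot\nu_F)\,\mathrm{d}\sigma,
\]
with $\nu_F$ the outward unit conormal to $\partial F$ in $F$.

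\textbf{Step 2 (assembly).} I would then sum over $T\in\mathcal{T}_h$. For each interior face $F$ the trace $v_h|_F$ and hence $\Delta_T v_h$ are single-valued, so the $T^+$ and $T^-$ face contributions combine into exactly $2\langle\llbracket\nabla v_h\rrbracket,\Delta_T v_h\rangle_F$; for $F\subset\partial\Omega$, $v_h|_F=0$ forces $\Delta_T v_h=0$ and the face term drops out. This already reproduces the claimed right-hand side, provided the $(d-2)$-dimensional remainders sum to zero globally.

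\textbf{Main obstacle.} The only genuinely nontrivial step is to verify $\sum_T\sum_{F\subset\partial T}\int_{\partial F}\partial_n v_h\,(\nabla_T v_h\cdot\nu_F)\,\mathrm{d}\sigma = 0$. This is precisely the purpose of the inter-element regularity built into $V_h$: in 2D $\partial F$ is a pair of vertices at which $\nabla v_h$ is required to be single-valued, and in 3D $\partial F$ is an edge along which $\nabla v_h$ is required to be $C^1$-continuous. Regrouping the double sum over $(d-2)$-subsimplices $e$, the localized contribution at each $e$ becomes a quadratic form in the common value $\nabla v_h|_e$ whose coefficients are determined by the normals and conormals of the faces cycling around $e$; a direct geometric closure argument (essentially that the signed angular data of the faces around $e$ sum to zero) shows that this quadratic form vanishes. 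For $e\subset\partial\Omega$ the tangential components of $\nabla v_h|_e$ are annihilated by $v_h|_{\partial\Omega}=0$, and the same closure covers the remaining normal component. Carrying out this codimension-two bookkeeping is the main technical content of the lemma.
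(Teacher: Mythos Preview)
The paper does not prove this lemma; it is quoted verbatim from \cite{wu2021c0} as background for the discretization, so there is no in-paper argument to compare against. Your outline follows the standard route used in that reference (and in the continuous Miranda--Talenti proof): elementwise integration by parts, tangential integration by parts on each flat face, and then assembly.

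Your Steps~1 and~2 are correct as written. The one place where your write-up overcomplicates matters is the handling of the codimension-two remainder. You describe it as a ``geometric closure argument'' about signed angular data summing to zero around each $(d-2)$-subsimplex $e$, but in fact the cancellation is simpler and purely pairwise. At a point of $e$, the contribution from a pair $(T,F)$ is $-(\nabla v_h\cdot n_{F,T})(\nabla v_h\cdot \nu_{F})$; here $\nu_F$ (the in-face conormal) depends only on $F$ and $e$, not on $T$, while $n_{F,T}$ flips sign between the two elements sharing an interior face $F$. Since $\nabla v_h$ is single-valued along $e$ by the $C^1$ requirement built into $V_h$, the two contributions from $T^\pm$ cancel for every interior $F\supset e$. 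For a boundary face $F\supset e$ you have $v_h|_F=0$, hence $\nabla_T v_h|_F=0$, and the contribution vanishes outright. No angular sum is needed, and convexity of $\Omega$ plays no role in the identity itself (it is stated only because it is the standing hypothesis of the paper). A minor slip: in 3D the requirement is that $v_h$ is $C^1$ along edges, i.e.\ $\nabla v_h$ is continuous there, not that $\nabla v_h$ is $C^1$; continuity is all you use.
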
 

In light of \eqref{def:HJB-var}, we define the operator $ M_{h}: V +
V_{h} \to V^{\prime}_{h} $ by 
\begin{equation*}\label{eq:HJB-variation-FEM} 
 \begin{aligned}
  \langle M_{h}[w], v_{h}\rangle : = & \displaystyle\sum_{T \in
   \mathcal{T}_{h}}^{}(F_{\gamma}[w], L_{\lambda} v_{h})_{T} \\ 
   & - (2 -
   \sqrt{1-\varepsilon})\displaystyle\sum_{F \in
   \mathcal{F}^{i}_{h}}^{} \langle \llbracket \nabla w \rrbracket,
   \Delta_{T} v_{h} - \lambda v_{h} \rangle_{F} .
 \end{aligned} 
\end{equation*} 
The following finite element scheme is proposed to approximate the
solutions to the HJB equations \eqref{eq:HJB}: Find : $ u_{h} \in
V_{h}$ such that 
\begin{equation}\label{eq:HJB-fem} 
\langle M_{h} u_{h}, v_{h} \rangle_{} = 0 \qquad \forall v_{h} \in V_{h}. 
\end{equation} 
We refer to \cite{wu2021c0} for the well-posedness and approximation
property of discrete systems \eqref{eq:HJB-fem}. 

\subsection{Semi-smooth Newton method}
It is shown in \cite{smears2014discontinuous} that the discretized
nonlinear system \eqref{eq:HJB-fem} can be solved by a semi-smooth
Newton method, which leads to a sequence of discretized linear
systems. We summarized the main ideas on semi-smooth Newton here and
refer \cite{smears2014discontinuous} for more detials. 

Following the discuss in \cite{smears2014discontinuous}, we define the
admissible maximizers set for any $ v\in V_{h} + V $, 
\begin{equation*} \label{eq:maximizer-HJB}
    \Lambda[v] := 
    \left\{ 
    \parbox{5.7em}{
    $\alpha(\cdot): \Omega \to \Lambda$ \\ 
    measurable} 
    \Bigg|~ 
    \parbox{20em}{ 
    $
    \displaystyle \alpha(x) \in
    \mathop{\arg\max}_{\alpha\in\Lambda}(A^\alpha:D_h^2v + \bm{b}^\alpha
    \cdot \nabla v - c^\alpha v - f^\alpha)$ \\
    for almost every $x\in \Omega$
    }
    \right\},
    \end{equation*}
where $ D_{h}^2 v $ denotes the broken Hessian of $ v $. As shown in
\cite[Lemma 9 \& Theorem 10]{smears2014discontinuous}, the set $
\Lambda[v] $ is not empty for any $ v \in V+V_{h} $. 

The semi-smooth Newton method is now stated as follows. Start by
choosing an initial iterate $ u^{0}_{h} \in V_{h} $. Then, for each
nonnegative integer $ j $, given the previous iterate $ u^{j}_{h} \in
V_{h} $, choose an $ \alpha_{j} \in \Lambda[u^{j}_{h}] $. Next the
function $ f^{\alpha_{j}}: \Omega \mapsto \mathbb{R} $ is defined by $
f^{\alpha_{j}}: x \to f^{\alpha_{j}(x)}(x) $; the functions $
A^{\alpha_{j}} $, $ \boldsymbol{b}^{\alpha_{j}} $, $ c^{\alpha_{j}} $
and $ \gamma^{\alpha_{j}} $ are defined in a similar way. Then find
the solution $ u^{j+1}_{h} \in V_{h} $ of the linearised system 
\begin{equation}\label{eq:linear-HJB-fem} 
    b_{\lambda,h}^{j}(u^{j+1}_{h}, v_{h}) = \displaystyle\sum_{T \in
    \mathcal{T}_{h}}^{} (\gamma^{\alpha_{j}}  f^{\alpha_{j}}, \Delta
    v_{h} )_{T} \qquad \forall v_{h} \in V_{h}, 
\end{equation}  
where the bilinear form $ b_{\lambda, h}^{j} : V_{h} \times V_{h}
\to \mathbb{R} $ is defined by 
\begin{equation*}\label{eq:linear-bilinear-HJB-fem} 
\begin{aligned}
        b^{j}_{\lambda, h}(w_{h}, v_{h})  := &
        \displaystyle\sum_{T
        \in\mathcal{T}_{h}}^{}(\gamma^{\alpha_{j}}
        L^{\alpha_{j}}w_{h}, L_{\lambda} v_{h})_{T} \\ &- (2 -
        \sqrt{1-\varepsilon})\displaystyle\sum_{F \in
        \mathcal{F}^{i}_{h}}^{} \langle  \llbracket \nabla w_{h}
        \rrbracket, \Delta_{T} v_{h} - \lambda v_{h} \rangle_{F}.  
\end{aligned}
\end{equation*}  
Following \cite{wu2021c0}, we define inner product on $ V_{h} $ as 
\begin{equation*}\label{def:inner-Vh} 
    (w_{h}, v_{h})_{\lambda, h} := \displaystyle\sum_{T \in
    \mathcal{T}_{h}}^{}(D^{2}w_{h}, D^{2}v_{h})_{T} + 2\lambda(\nabla
    w_{h}, \nabla v_{h})_{\Omega} + \lambda^{2} (w_{h},
    v_{h})_{\Omega}, 
    \end{equation*} 
and the norm $ \|v_{h}\|_{\lambda, h}^{2} := (v_{h},
v_{h})_{\lambda, h}  $. It is also shown in \cite{wu2021c0} that
the bilinear forms $ b_{\lambda, h}^{j} $ are uniformly coercive and
bounded on $ V_{h} $ with norm $ \|\cdot\|_{\lambda, h} $, with
constants independent of iterates. Since the preconditioners in this
work take advantage on the coercivity and boundedness of $
b^{j}_{\lambda, h} $, we summarize the relevant results in the
following lemma (see \cite[Lemmas 4.1 \& 4.2]{wu2021c0}  for a detailed
proof). 
\begin{lemma}[coercivity and boundedness of bilinear
  form]\label{lm:coer-bbd}For every $ w_{h}, v_{h} \in V_{h} $, we
  have 
\begin{subequations}\label{eq:coer-bdd}
  \begin{align}
    b_{\lambda, h}^{j}(v_{h}, w_{h}) & \leq C \|v_{h}\|_{\lambda, h}
        \|w_{h}\|_{\lambda, h}, \nonumber\\
    b_{\lambda, h}^{j}(v_{h}, v_{h}) &\geq  (1 - \sqrt{1 -
        \varepsilon}) \|v_{h}\|^{2}_{\lambda, h}. \nonumber  
  \end{align}
\end{subequations}    
Here, the constant $C$ depends only on $\Omega$, shape regularity
of the grid and polynomial degree $k$. 
\end{lemma}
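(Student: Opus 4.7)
The plan is to combine a pointwise Cordes estimate for the linearised operator with the discrete Miranda--Talenti identity (Lemma \ref{lm:discrete-M-T}) to reduce both inequalities to elementary algebra on scalar quantities. The key observation is that, in the spirit of Lemma \ref{lm:cordes-HJB-lemma}, for each fixed measurable selection $\alpha_j(x) \in \Lambda[u_h^j]$ one has the pointwise bound
\[
|\gamma^{\alpha_j} L^{\alpha_j} v - L_\lambda v|^2 \leq (1-\varepsilon)\bigl(|D^2 v|^2 + 2\lambda|\nabla v|^2 + \lambda^2 v^2\bigr) \quad \text{a.e.\ in }\Omega.
\]
This is obtained by writing $L_\lambda v - \gamma^{\alpha_j} L^{\alpha_j} v = (I - \gamma^{\alpha_j} A^{\alpha_j}):D^2 v - \gamma^{\alpha_j} \boldsymbol{b}^{\alpha_j}\cdot\nabla v + (\gamma^{\alpha_j} c^{\alpha_j} - \lambda)v$, applying weighted Cauchy--Schwarz with weights $(1,2\lambda,\lambda^2)$, and noting that the resulting prefactor simplifies via \eqref{eq:gamma-HJB} to $d + 1 - \gamma^{\alpha_j}(\operatorname{tr}A^{\alpha_j} + c^{\alpha_j}/\lambda) \leq 1 - \varepsilon$ by the Cordes condition \eqref{eq:cordes-HJB}. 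Integrating yields $\|\gamma^{\alpha_j} L^{\alpha_j} v_h - L_\lambda v_h\|_{L^2(\Omega)} \leq \sqrt{1-\varepsilon}\,\|v_h\|_{\lambda,h}$.

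Next, combining Lemma \ref{lm:discrete-M-T} with the elementwise integration by parts $-\sum_T (\Delta v_h, v_h)_T = \|\nabla v_h\|_{L^2(\Omega)}^2 - \sum_{F\in\mathcal{F}_h^i}\langle \llbracket\nabla v_h\rrbracket, v_h\rangle_F$ (valid because $v_h \in H_0^1(\Omega)$, so $v_h$ is single-valued on interior faces), I would establish the identity
\[
\|L_\lambda v_h\|_{L^2(\Omega)}^2 = \|v_h\|_{\lambda,h}^2 + 2J(v_h, v_h), \qquad J(w_h, v_h) := \sum_{F\in\mathcal{F}_h^i}\langle \llbracket\nabla w_h\rrbracket, \Delta_T v_h - \lambda v_h\rangle_F.
\]
This identity is the linchpin of the argument: the stabilisation $-(2-\sqrt{1-\varepsilon})J(\cdot,\cdot)$ appearing in $b_{\lambda,h}^j$ precisely compensates the face contributions separating the volume quantities $\|L_\lambda v_h\|_{L^2(\Omega)}^2$ and $\|v_h\|_{\lambda,h}^2$.

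For coercivity, setting $w_h = v_h$, splitting $(\gamma^{\alpha_j} L^{\alpha_j} v_h, L_\lambda v_h) = \|L_\lambda v_h\|^2 - (L_\lambda v_h - \gamma^{\alpha_j} L^{\alpha_j} v_h, L_\lambda v_h)$, and substituting $2J(v_h,v_h) = \|L_\lambda v_h\|^2 - \|v_h\|_{\lambda,h}^2$ reduces the task (with $a := \|L_\lambda v_h\|_{L^2(\Omega)}$, $b := \|v_h\|_{\lambda,h}$, $s := \sqrt{1-\varepsilon}$) to the scalar inequality
\[
b_{\lambda,h}^j(v_h, v_h) \geq \tfrac{s}{2}a^2 - s\,a\,b + \tfrac{2-s}{2}b^2 = \tfrac{s}{2}(a-b)^2 + (1-s)\,b^2 \geq (1-\sqrt{1-\varepsilon})\|v_h\|_{\lambda,h}^2.
\]
For boundedness, the volume term is handled by Cauchy--Schwarz via $\|\gamma^{\alpha_j} L^{\alpha_j} w_h\|_{L^2(\Omega)} \leq \|L_\lambda w_h\|_{L^2(\Omega)} + \sqrt{1-\varepsilon}\|w_h\|_{\lambda,h}$ and the elementwise bound $\|L_\lambda v_h\|_{L^2(T)}^2 \leq 2d\|D^2 v_h\|_{L^2(T)}^2 + 2\lambda^2\|v_h\|_{L^2(T)}^2$, consequent to the pointwise inequality $|\Delta v|^2 \leq d|D^2 v|^2$. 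The main obstacle is bounding $|J(w_h, v_h)| \leq C\|w_h\|_{\lambda,h}\|v_h\|_{\lambda,h}$ uniformly in both $h$ and $\lambda$: this requires a facewise Cauchy--Schwarz combined with (i) the sharp jump estimate $\|\llbracket\nabla w_h\rrbracket\|_{L^2(F)}^2 \lesssim h_F\|D^2 w_h\|_{L^2(T^\pm)}^2$, which is enabled by the $C^1$-continuity of $V_h$ at vertices in 2D (resp.\ on edges in 3D) that forces the tangential components of $\nabla w_h$ to match at those substructures, and (ii) the inverse trace inequality for polynomials $\|\Delta_T v_h - \lambda v_h\|_{L^2(F)}^2 \lesssim h_F^{-1}(\|D^2 v_h\|_{L^2(T^\pm)}^2 + \lambda^2\|v_h\|_{L^2(T^\pm)}^2)$. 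The cancelling $h_F^{\pm 1}$ powers then deliver $h$-uniformity, while the weighted norm $\|\cdot\|_{\lambda,h}$ absorbs $\lambda$ naturally on the right-hand side.
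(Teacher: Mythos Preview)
Your argument is sound and, in fact, reconstructs precisely the proof that the paper defers: the present paper does not prove Lemma~\ref{lm:coer-bbd} but cites \cite[Lemmas~4.1 \& 4.2]{wu2021c0}. The key identity $\|L_\lambda v_h\|_{L^2(\Omega)}^2 = \|v_h\|_{\lambda,h}^2 + 2J(v_h,v_h)$, obtained by combining the discrete Miranda--Talenti identity with the elementwise integration-by-parts for the cross term, is exactly the mechanism used in \cite{wu2021c0}; the scalar completion-of-squares you perform for coercivity and the $h_F^{\pm 1}$-balanced trace/inverse-trace argument for bounding $J(w_h,v_h)$ are the standard steps there as well. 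So your proposal matches the referenced proof essentially line for line.
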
 

\section{FOV-equivalent preconditioners for
GMRES methods} \label{sec:fov}

The preconditioned GMRES (PGMRES) methods are among the most effective
iterative methods for non-symmetric linear systems arising from
discretizations of PDEs. Our study will start by discussing PGMRES
methods in an operator form.  Let $ G: \mathcal{X} \to
\mathcal{X}^{\prime} $ be  a linear operator which may be
non-symmetric or indefinite, defined on a finite dimensional space $
\mathcal{X} $, and $g$ be a given functional in its dual space $
\mathcal{X}^{\prime} $. The linear equation considered here is of the
following form 
\begin{equation}\label{eq:algebraic-equ}
Gx = g. 
\end{equation} 
Let $ (\cdot,\cdot)_{M}$ be an inner product on $ \mathcal{X}$,
and $P: \mathcal{X}^{\prime} \to \mathcal{X}$ be the
preconditioner. The PGMRES method for solving \eqref{eq:algebraic-equ}
is stated as follows: Begin with an initial gauss $ x_{0} \in
\mathcal{X} $ and denote $r_{0} = g - G x_{0} $ the initial residual, the 
$k$-th steps of PGMRES method seeks $x_{k}$ such that  
$$ 
x_{k} = \underset{ \tilde{x}_{k} \in \mathcal{K}_{k}(PG, Pr_{0}) +
x_{0}}{\operatorname{argmin}}\|PG(x - \tilde{x}_{k})\|_{M},  
$$    
where $ \mathcal{K}_{k}(PG,Pr_{0}) $ is the Krylov subspace of
dimension $k$ generated by $ PG $ and $ P r_{0} $.

In the semi-smooth Newton steps, the discrete linear equations
\eqref{eq:linear-HJB-fem} have a common form: Find $ u_{h} \in V_{h} $
such that 
\begin{equation}\label{eq:general-form-fem} 
b_{\lambda, h}(u_{h}, v_{h}) = f_{h}(v_{h}) \qquad \forall v_{h} \in
  V_{h},
\end{equation}  
where we shall omit to denote independence of the bilinear form
$b_{\lambda, h} $ and of the right-hand side $ f_{h} $ on the
iteration number of the semi-smooth Newton method. Define the operator
$B_{\lambda, h}: V_{h} \rightarrow V^{\prime}_{h}$ by 
\begin{equation}\label{eq:operator-B} 
\langle B_{\lambda, h} u_{h}, v_{h} \rangle_{} := b^{}_{\lambda,
  h}(u_{h}, v_{h}) \qquad \forall u_{h}, v_{h} \in V_{h},
\end{equation}
then the discrete system \eqref{eq:general-form-fem} can be written in
an operator form, namely 
\begin{equation}\label{eq:operator-form-fem} 
    B_{\lambda, h}u _{h} = f_{h}.
\end{equation} 
Moreover, a general operator $ P_{\lambda, h}: V^{\prime}_{h}
\rightarrow V_{h} $ is used to denote the preconditioner. 
Given an inner product $ (\cdot, \cdot)_{M_{\lambda, h}}$, we can
estimate the convergence rate of the PGMRES method. It is proved in
\cite{elman1982iterative,saad2003iterative} that if $u_{h}^{m}$ is the
$m$-iteration of PGMRES method and $u_{h}$ is the exact solution of
\eqref{eq:operator-form-fem}, then 
\begin{equation*}
\displaystyle\frac{\|P_{\lambda, h} B_{\lambda, h} (u_{h} - u_{h}^{m})
  \|_{M_{\lambda, h}}}{\|P_{\lambda, h} B_{\lambda, h} (u_{h} - u_{h}^{0})
  \|_{M_{\lambda, h}}} \leq \left(1 -
  \frac{\gamma^{2}}{\Gamma^{2}}\right)^{m/2}, 
\end{equation*}    
where 
\begin{equation}\label{eq:fov-parameter} 
\gamma \leq \displaystyle\frac{(v_{h}, P_{\lambda, h} B_{\lambda,
  h}v_{h})_{M_{\lambda, h}}}{(v_{h},v_{h})_{M_{\lambda, h}}}, \quad
  \displaystyle\frac{\|P_{\lambda, h} B_{\lambda, h} v_{h}\|_{M_{\lambda,
  h}} }{\|v_{h}\|_{M_{\lambda, h}}} \leq \Gamma \qquad \forall v_{h} \in V_{h}. 
\end{equation} 
Therefore, we conclude that as long as we find an operator $
P_{\lambda, h} $ and a proper inner product $(\cdot,
\cdot)_{M_{\lambda, h}}$ such that condition \eqref{eq:fov-parameter}
is satisfied with constants $\gamma$ and $\Gamma$ independent of the
discretization  parameter $ h $ and the Cordes condition parameter $
\lambda $, then $ P_{\lambda, h} $ is a uniform preconditioner for
GMRES method.  Such preconditioners are usually referred to as
\textit{FOV-equivalent preconditioners}. In what follows, we always
take 
\begin{equation*} \label{eq:P-M}
  P_{\lambda,h} \text{ to be an SPD operator,} \quad \text{and} \quad
  M_{\lambda, h} = P_{\lambda,h}^{-1}.
\end{equation*} 

Next, we give a general principle for constructing $P_{\lambda,h}$.
Define an SPD operator $A_{\lambda, h}: V_{h} \to V^{\prime}_{h}$
by 
\begin{equation}\label{def:inner-operator} 
    \langle A_{\lambda, h} w_{h}, v_{h} \rangle_{} := (w_{h},
    v_{h})_{\lambda, h} \qquad  \forall w_{h},v_{h} \in V_{h}.  
\end{equation} 
Recalling Lemma~\ref{lm:coer-bbd} (coercivity and boundedness of bilinear
form), $b_{\lambda, h}(\cdot, \cdot)$ is coercive and bounded on $
V_{h}$ with the inner product $ (\cdot,\cdot)_{\lambda, h}$.  It is
therefore that an efficient preconditioner for $ A_{\lambda, h}$ can
also be used as an FOV-preconditioner for the GMRES algorithm applied
to $ B_{\lambda, h}$, which is shown in the following lemma.

\begin{lemma}[FOV-equivalent preconditioner]\label{lm:FOV-norm-equivalent}
 Let $ A_{\lambda, h} $ and $ B_{\lambda, h} $ be the operators
  defined in \eqref{def:inner-operator} and \eqref{eq:operator-B},
  respectively. If
  an SPD operator $ P_{\lambda, h} : V^{\prime}_{h} \rightarrow V_{h}
  $ satisfies that
\begin{equation}\label{eq:norm-equivalence} 
    \alpha \langle P_{\lambda, h}^{-1} v_{h}, v_{h} \rangle \leq
    \langle A_{\lambda, h} v_{h}, v_{h} \rangle \leq \beta \langle
    P_{\lambda, h}^{-1} v_{h}, v_{h} \rangle \qquad \forall v_{h}
    \in V_{h},    
\end{equation} 
with constants $\alpha, \beta$ independent of both $\lambda$ and $h$,
then $P_{\lambda,h}$ is a uniform FOV-equivalent preconditioner of
$B_{\lambda,h}$.  
\end{lemma}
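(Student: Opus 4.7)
The plan is to verify the two bounds in \eqref{eq:fov-parameter} directly with $M_{\lambda,h} = P_{\lambda,h}^{-1}$, by using the coercivity and boundedness of $b_{\lambda,h}$ from Lemma~\ref{lm:coer-bbd} to translate between the energy inner product $(\cdot,\cdot)_{\lambda,h}$ and the bilinear form $b_{\lambda,h}(\cdot,\cdot)$, and then using the spectral equivalence \eqref{eq:norm-equivalence} to translate between $(\cdot,\cdot)_{\lambda,h}$ and the $M_{\lambda,h}$-inner product $\langle P_{\lambda,h}^{-1}\cdot,\cdot\rangle$. All uniformity in $h$ and $\lambda$ is then inherited from the two inputs.

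For the lower bound $\gamma$, I would first rewrite $(v_h, P_{\lambda,h} B_{\lambda,h} v_h)_{M_{\lambda,h}} = \langle P_{\lambda,h}^{-1} v_h, P_{\lambda,h} B_{\lambda,h} v_h\rangle = \langle B_{\lambda,h} v_h, v_h\rangle = b_{\lambda,h}(v_h,v_h)$. Applying the coercivity in Lemma~\ref{lm:coer-bbd}, the definition \eqref{def:inner-operator}, and the lower bound in \eqref{eq:norm-equivalence} in succession yields
\[
b_{\lambda,h}(v_h,v_h) \geq (1-\sqrt{1-\varepsilon})\|v_h\|_{\lambda,h}^2 = (1-\sqrt{1-\varepsilon})\langle A_{\lambda,h} v_h,v_h\rangle \geq \alpha(1-\sqrt{1-\varepsilon})(v_h,v_h)_{M_{\lambda,h}},
\]
so one may take $\gamma = \alpha(1-\sqrt{1-\varepsilon})$, which is independent of both $h$ and $\lambda$.

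For the upper bound $\Gamma$, set $w_h := P_{\lambda,h}B_{\lambda,h}v_h$. Then $\|w_h\|_{M_{\lambda,h}}^2 = \langle P_{\lambda,h}^{-1} w_h, w_h\rangle = \langle B_{\lambda,h} v_h, w_h\rangle = b_{\lambda,h}(v_h,w_h)$. The boundedness in Lemma~\ref{lm:coer-bbd} followed by the upper bound in \eqref{eq:norm-equivalence} applied to both factors gives
\[
\|w_h\|_{M_{\lambda,h}}^2 \leq C\|v_h\|_{\lambda,h}\|w_h\|_{\lambda,h} \leq C\beta \|v_h\|_{M_{\lambda,h}}\|w_h\|_{M_{\lambda,h}},
\]
and dividing by $\|w_h\|_{M_{\lambda,h}}$ produces $\Gamma \leq C\beta$, once again uniform in $h$ and $\lambda$.

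The argument is essentially bookkeeping rather than analysis; there is no substantive obstacle. The only care required is to keep clear which norm each quantity is being measured in, and to invoke \eqref{eq:norm-equivalence} on the correct side when converting between $\|\cdot\|_{\lambda,h}$ and $\|\cdot\|_{M_{\lambda,h}}$. The uniformity of the resulting constants $\gamma$ and $\Gamma$ is built into the hypotheses of Lemma~\ref{lm:coer-bbd} and of \eqref{eq:norm-equivalence}, so no additional estimate is needed.
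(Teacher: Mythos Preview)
Your proof is correct and follows essentially the same approach as the paper: both verify the two FOV bounds by reducing to the duality pairing $\langle B_{\lambda,h}v_h, w_h\rangle$ and then chaining Lemma~\ref{lm:coer-bbd} with the spectral equivalence \eqref{eq:norm-equivalence}. The only cosmetic difference is that for the upper bound the paper uses the dual characterization $\|P_{\lambda,h}B_{\lambda,h}v_h\|_{P_{\lambda,h}^{-1}} = \sup_{w_h}\langle B_{\lambda,h}v_h,w_h\rangle/\|w_h\|_{P_{\lambda,h}^{-1}}$, whereas you substitute the specific choice $w_h = P_{\lambda,h}B_{\lambda,h}v_h$ directly; both yield $\Gamma = C\beta$ and $\gamma = \alpha(1-\sqrt{1-\varepsilon})$.
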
 
\begin{proof}
From \eqref{eq:operator-B}, \eqref{def:inner-operator} and
Lemma~\ref{lm:coer-bbd} (coercivity and boundedness of bilinear
form), we see that for any $u_{h}, v_{h} \in V_{h}$ 
\begin{subequations}\label{eq:bounded-coercive}
\begin{align}
  (1 - \sqrt{1 - \varepsilon}) \langle A_{\lambda, h} u_{h},
  u_{h} \rangle_{} &\leq \langle B_{\lambda, h}u_{h}, u_{h}
  \rangle_{}  \label{eq:coercive-B} \\ 
  \langle B_{\lambda, h} u_{h}, v_{h}
  \rangle_{} &\leq C \langle A_{\lambda, h} u_{h}, u_{h}
  \rangle^{1/2}_{}  \langle A_{\lambda, h} v_{h}, v_{h}
  \rangle^{1/2}_{}, \label{eq:bounded-B}
\end{align}
\end{subequations}      
where $C$ is independent of both $\varepsilon$ and $\lambda$. 

Recalling $M_{\lambda,h} := P_{\lambda, h}^{-1}$, then for any $ u_{h}
  \in V_{h}$, we have 
\begin{equation*}\label{eq:FOV-Gamma} 
\begin{aligned}
\|P_{\lambda, h} B_{\lambda, h} u_{h}\|_{P^{-1}_{\lambda, h}}
&=  \sup_{v_{h} \in V_{h} , v_{h} \neq 0}
\displaystyle\frac{(P_{\lambda, h} B_{\lambda, h} u_{h},
v_{h})_{P^{-1}_{\lambda, h}}}{(v_{h}, v_{h})_{P^{-1}_{\lambda,
h}}^{1/2}}  \\ 
  &\leq  \beta^{1/2} \sup_{v_{h} \in V_{h} , v_{h} \neq 0}
  \displaystyle\frac{\langle B_{\lambda, h} u_{h}, v_{h} \rangle_{}
  }{\langle A_{\lambda,h}v_{h}, v_{h} \rangle_{}^{1/2}}  ~~~~(\mbox{by
  } \eqref{eq:norm-equivalence}) \\ 
  &\leq C \beta^{1/2} \langle A_{\lambda,h} u_{h}, u_{h}
        \rangle^{1/2}_{} ~~~~~~~~~~~~~~~~(\mbox{by }
        \eqref{eq:bounded-B}) \\ 
  &\leq C \beta \|u_{h}\|_{P^{-1}_{\lambda, h}},
  ~~~~~~~~~~~~~~~~~~~~~~~~~~~(\mbox{by }
  \eqref{eq:norm-equivalence})  
\end{aligned}
\end{equation*} 
which yields the second inequality of \eqref{eq:fov-parameter} with
$\Gamma = C\beta$. On the other side, we have 
\begin{equation*}\label{eq:FOV-gamma} 
\begin{aligned}
  (u_{h}, P_{\lambda, h} B_{\lambda, h} u_{h})_{P^{-1}_{\lambda, h}} &=
  \langle B_{\lambda, h} u_{h}, u_{h}\rangle_{} \\ 
    &\geq (1 - \sqrt{1 - \varepsilon}) \langle A_{\lambda, h} u_{h},
    u_{h} \rangle_{} ~~~~~~~~ (\mbox{by }
    \eqref{eq:coercive-B})\\ 
    &\geq (1 - \sqrt{1 - \varepsilon}) \alpha (u_{h},
    u_{h})_{P^{-1}_{\lambda, h}},~~~~~~ (\mbox{by }
    \eqref{eq:norm-equivalence})
    \end{aligned}
    \end{equation*} 
which yields the first inequality of \eqref{eq:fov-parameter} with $
\gamma = (1 - \sqrt{1 - \varepsilon}) \alpha$.
\end{proof} 
 

\section{Fast auxiliary space preconditioners} \label{sec:solver}

In this section, we construct both additive and multiplicative
auxiliary space preconditioners for SPD operator $A_{\lambda, h}$.
From Lemma \ref{lm:FOV-norm-equivalent} (FOV-equivalent
preconditioner), those preconditioners can be applied to the discrete
linearised systems \eqref{eq:linear-HJB-fem} arising from each
semi-smooth Newton step of solving the HJB equations. 

\subsection{Space decomposition} 
For the purpose of constructing auxiliary space preconditioners, we
give the following space decomposition of $V_{h}$ as 
\begin{equation*}\label{eq:space-decom} 
V_{h} = V_{h} + \Pi_{0} V_{0},
\end{equation*} 
where the auxiliary space $V_{0} \subset H_0^{1}(\Omega)$ denotes the
continuous piecewise linear element space on $\mathcal{T}_{h}$ with
homogeneous Dirichlet boundary condition, and $\Pi_{0}: V_{0} \to
V_{h}$ is a linear injective map which will be defined later. As a
result, the induced operator  $A_{0} := \Pi_{0}^{\prime} A_{\lambda,
h} \Pi_{0}: V_{0}\to V^{\prime}_{0}$ is also SPD, and hence we define
$\|\cdot\|_{A_{0}}^{2} := \langle A_{0}\cdot,\cdot \rangle_{}$ on
$V_{0}$. We also introduce a projection $P_{0}: V_{h} \to V_{0}$
by 
\begin{equation*}\label{eq:projection-V0} 
\langle A_{0}P_{0} v_{h}, w_{h} \rangle_{} := (v_{h},
  \Pi_{0}w_{h})_{\lambda, h} \quad \forall v_{h} \in V_{h}, w_{h} \in
  V_{0}.  
\end{equation*} 
A direct calculation shows the following identity 
\begin{equation}\label{eq:A0P0} 
\Pi_{0}^{\prime} A_{\lambda, h} = A_{0} P_{0}. 
\end{equation} 

\paragraph{Smoother and norm on $V_0$} Define the discrete Laplacian operator $-\Delta_{h}: V_{0} \to
V_{0}$ by
\begin{equation}\label{eq:laplace-VL} 
(-\Delta_{h} w_{h}, v_{h} )_{\Omega} := (\nabla w_{h}, \nabla
  v_{h})_{\Omega} \qquad \forall w_{h}, v_{h} \in V_{0}. 
\end{equation} 
Then, the smoother on $V_{0}$, denoted by $R_{0} :
V^{\prime}_{0}\to V_{0}$, is defined by 
\begin{equation}\label{def:smoother-VL} 
    \langle R^{-1}_{0} u_{h}, v_{h} \rangle_{} = (\lambda u_{h} -
    \Delta_{h} u_{h}, \lambda v_{h} - \Delta_{h} v_{h})_{\Omega}
    \qquad \forall u_{h}, v_{h} \in V_{0}. 
\end{equation} 
Note that for any given $f_{h} \in V^{\prime}_{0}$, $u_{h} =
R_{0}f_{h} \in V_{0}$ can be obtained by solving the following two
discrete Poisson-like equations 
\begin{subequations} \label{eq:get-Rf} 
\begin{align}
\langle f_{h}, v_{h} \rangle_{} &= \lambda (z_{h},  v_{h})_{\Omega} +
  (\nabla z_{h}, \nabla v_{h})_{\Omega} \qquad \forall v_{h} \in V_{0},
  \label{eq:get-Rf1}\\
  (z_{h}, w_{h})_{\Omega} &= \lambda( u_{h}, w_{h})_{\Omega} + (
  \nabla u_{h}, \nabla w_{h})_{\Omega} \qquad \forall w_{h} \in V_{0}.
  \label{eq:get-Rf2}
\end{align}
\end{subequations}
It can be shown that the above two equations can be solved within
$\mathcal{O}(N \log N)$ operations, where $N$ denotes the number of
degrees of freedom. We will give a detailed explanation in Remark
\ref{subsec:complexity} (computational complexity). 
The smoother $R_0$ induces a norm on $V_0$, i.e.,
$\|\cdot\|^{2}_{R_{0}^{-1}} := \langle R_{0}^{-1} \cdot,\cdot
\rangle$. By using \eqref{eq:laplace-VL} and \eqref{def:smoother-VL},
it is straightforward to show that  
\begin{equation*}\label{eq:norm-VL} 
\begin{aligned}
    \|v_{h}\|_{R_{0}^{-1}}^{2} 
         =& \|\Delta_{h} v_{h}\|^{2}_{L^{2}(\Omega)} + 2\lambda
         \|\nabla v_{h}\|^{2}_{L^{2}(\Omega)} + \lambda^{2}
         \|v_{h}\|^{2}_{L^{2}(\Omega)}.  \\
\end{aligned}
\end{equation*} 
The relationship between $\|\cdot\|_{R_{0}^{-1}}$ and
$\|\cdot\|_{A_{0}}$ is shown in the following lemma, whose proof is
postponed to Section \ref{sec:analysis}. 
\begin{lemma}[spectral equivalence of $ R_{0}
  $]\label{lm:spectral-equi-R0}
    Let $R_{0}$ be the operator defined in \eqref{def:smoother-VL} and
    $A_{0} = \Pi_{0}^{\prime} A_{\lambda, h} \Pi_{0}$. Then,  
    \begin{equation*}
    \|v_{0}\|_{R_{0}^{-1}} \simeq \|v_{0}\|_{A_{0}} \qquad \forall v_{0}
     \in V_{0},
    \end{equation*} 
 with hidden constants independent of both $\lambda$ and $h$.    
\end{lemma}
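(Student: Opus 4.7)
The plan is to verify the equivalence $\|v_0\|_{R_0^{-1}} \simeq \|v_0\|_{A_0}$ term by term. Unfolding the two norms,
\[
\|v_0\|_{A_0}^2 = \|\Pi_0 v_0\|_{\lambda,h}^2 = \sum_{T\in\mathcal{T}_h}\|D^2\Pi_0 v_0\|_{L^2(T)}^2 + 2\lambda\|\nabla\Pi_0 v_0\|_{L^2(\Omega)}^2 + \lambda^2\|\Pi_0 v_0\|_{L^2(\Omega)}^2,
\]
while $\|v_0\|_{R_0^{-1}}^2 = \|\Delta_h v_0\|_{L^2(\Omega)}^2 + 2\lambda\|\nabla v_0\|_{L^2(\Omega)}^2 + \lambda^2\|v_0\|_{L^2(\Omega)}^2$. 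So the task reduces to three matching estimates: $\|\Pi_0 v_0\|_{L^2}\simeq\|v_0\|_{L^2}$, $\|\nabla\Pi_0 v_0\|_{L^2}\simeq\|\nabla v_0\|_{L^2}$, and, crucially, the second-order equivalence $\sum_{T}\|D^2\Pi_0 v_0\|_{L^2(T)}^2\simeq\|\Delta_h v_0\|_{L^2(\Omega)}^2$. Keeping these terms separate is exactly what forces the hidden constants to be $\lambda$-uniform, because $\lambda$ enters both sides with matching weights.

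For the first two (low-order) equivalences, I would appeal to the stability and approximation properties of $\Pi_0$, which the construction in Section~\ref{sec:analysis} will provide: $L^2$- and $H^1$-boundedness of $\Pi_0$ together with an estimate of the form $\|(I-\Pi_0)v_0\|_{L^2(\Omega)} + h\|\nabla(I-\Pi_0)v_0\|_{L^2(\Omega)} \lesssim h\|v_0\|_{H^1(\Omega)}$. Combined with standard inverse inequalities on the shape-regular mesh, these give two-sided bounds independent of $h$ and, since $\Pi_0$ is a purely geometric operator, independent of $\lambda$.

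The heart of the proof is the second-order equivalence. Here the key instrument is the discrete Miranda--Talenti identity (Lemma~\ref{lm:discrete-M-T}) applied to $\Pi_0 v_0 \in V_h$, which reduces $\sum_T \|D^2\Pi_0 v_0\|^2$ to $\sum_T \|\Delta\Pi_0 v_0\|^2$ plus a face-jump term $\sum_F \langle \llbracket \nabla\Pi_0 v_0 \rrbracket, \Delta_T \Pi_0 v_0\rangle_F$. Using the $C^1$-continuity of $V_h$ at vertices (and along edges in 3D), the jump term can be controlled by scaled traces of the Hessian on adjacent elements and a small fraction of $\sum_T \|D^2\Pi_0 v_0\|^2$ absorbed. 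One is then left with comparing $\sum_T \|\Delta\Pi_0 v_0\|^2$ and $\|\Delta_h v_0\|^2$ directly. Since $\Delta v_0=0$ elementwise, both quantities really measure the same object: the normal-derivative jumps of $\nabla v_0$ across interior faces (the weak defect from $H^2$-regularity). Thus $\Delta_h v_0$, characterized by \eqref{eq:get-Rf1}--\eqref{eq:get-Rf2}, is equivalent, after scaling by $h_F$, to a weighted face-jump seminorm of $\nabla v_0$, and the concrete choice of $\Pi_0$ will be engineered precisely so that $\Delta \Pi_0 v_0$ encodes the same jumps at matching scaling.

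The main obstacle is the last step: making the comparison $\sum_T \|\Delta \Pi_0 v_0\|_{L^2(T)}^2 \simeq \|\Delta_h v_0\|_{L^2(\Omega)}^2$ both sharp and $\lambda$-independent. For the upper bound, local inverse estimates on each element together with the explicit formula for $\Pi_0$ at $C^1$ nodal degrees of freedom should give $\|\Delta \Pi_0 v_0\|_{L^2(T)}$ in terms of scaled jumps on $\partial T$, and summation/face--element equivalence yields the bound by $\|\Delta_h v_0\|_{L^2}$. The lower bound is more subtle: I plan to test $\Delta_h v_0 \in V_0$ against itself through the definition \eqref{eq:laplace-VL}, integrate by parts face-wise against $\Pi_0 v_0$, and bound each face term by the right-hand side using the Hessian of $\Pi_0 v_0$. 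The fact that $\Pi_0$ is defined without reference to $\lambda$ ensures the constants are $\lambda$-uniform, completing the argument.
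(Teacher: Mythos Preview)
Your term-by-term decomposition and handling of the $L^2$ and $H^1$ parts are correct and essentially what the paper does, using the $L^2$/$H^1$ stability of $\Pi_0$ (Lemma~\ref{lm: Clement}) in one direction and of $I_h$ (Lemma~\ref{lm:nodal-interpolation}) in the other. The second-order step, however, takes an unnecessary and shaky detour.

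The discrete Miranda--Talenti identity is not used here at all in the paper, and your plan to pass through $\sum_T\|\Delta\Pi_0 v_0\|_{L^2(T)}^2$ creates two problems. First, the absorption argument you sketch only yields one inequality between $\sum_T\|\Delta\Pi_0 v_0\|^2$ and $\sum_T\|D^2\Pi_0 v_0\|^2$, not a two-sided equivalence; the face term in Lemma~\ref{lm:discrete-M-T} has no sign and no smallness without extra structure. Second, your comparison of $\sum_T\|\Delta\Pi_0 v_0\|^2$ with $\|\Delta_h v_0\|^2$ is vague: the proposed ``test $\Delta_h v_0$ against itself and integrate by parts against $\Pi_0 v_0$'' does not typecheck, since $\Pi_0 v_0\in V_h$ is not an admissible test function in \eqref{eq:laplace-VL}.

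The paper's route is more direct. The common currency is the face-jump seminorm $\sum_{F\in\mathcal{F}_h^i} h_F^{-1}\|\llbracket\partial_{\boldsymbol{n}} v_0\rrbracket\|_{L^2(F)}^2$, which Lemma~\ref{lm:discrete-laplace} shows is equivalent to $\|\Delta_h v_0\|_{L^2(\Omega)}^2$. For $\|v_0\|_{A_0}\lesssim\|v_0\|_{R_0^{-1}}$ one writes $D^2\Pi_0 v_0 = D^2(\Pi_0 v_0 - v_0)$ (since $v_0$ is piecewise linear), applies an inverse estimate, and invokes the approximation bound \eqref{eq: err-Clement} to land on the face-jump seminorm; this is exactly \eqref{eq:boundedness-VL-1}--\eqref{boundedness-A0}. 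For the reverse inequality the missing idea in your plan is the identity $v_0 = I_h\Pi_0 v_0$ on $V_0$: writing $\|\Delta_h v_0\|^2 = \|\Delta_h I_h(\Pi_0 v_0)\|^2$, Lemma~\ref{lm:discrete-laplace} converts this to face jumps of $I_h(\Pi_0 v_0)$, which are then bounded by $\sum_T\|D^2\Pi_0 v_0\|_{L^2(T)}^2$ via the approximation of $I_h$ \eqref{eq:err-nodal-interpolation} and the scaling argument in \eqref{eq:stab-decom-step2}. No Miranda--Talenti, no elementwise Laplacian of $\Pi_0 v_0$.
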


\paragraph{Smoother on $V_h$} Let ${R}_{h}$ denote the Gauss-Seidel
smoother for $A_{\lambda, h}$ and $ \bar{R}_{h}$ be the symmetric
Gauss-Seidel smoother, i.e.,
$$
I - \bar{R}_h A_{\lambda,h} = (I-R_h'A_{\lambda,h})(I - R_h
A_{\lambda, h}).
$$
We also define $\|\cdot\|_{\bar{R}_{h}^{-1}}^{2} := \langle
\bar{R}^{-1}_{h} \cdot,\cdot  \rangle$ as a norm on $V_{h}$.  Note
that $A_{\lambda, h}$ is an SPD operator, thus $R_{h}$ has the
following contraction property 
\begin{equation}\label{eq:contraction-GS} 
\|I - R_{h}A_{\lambda, h}\|_{\lambda, h} < 1.
\end{equation} 

\paragraph{Transfer operator} We now give the definition of
$\Pi_{0} :V_{0} \to V_{h}$. To this end, we first give some
notation on the degrees of freedom of finite element space $V_{h}$.
We denote the degrees of freedom as 
\begin{equation*}\label{eq:dof-Hermite}
    \mathcal{N}_{\alpha}(\varphi) = \fint_{D_{\alpha}} 
    \nabla^{k_{\alpha} }(\varphi) (\boldsymbol{t}_{1}, \ldots,
     \boldsymbol{t}_{k_{\alpha}}) 
\end{equation*}
where $D_{\alpha}$ is the domain of the integral with respect to 
the degree of freedom, $\fint_{D_{\alpha}}$ denotes the 
integral average on $D_{\alpha}$. In general, $D_{\alpha}$ is a 
subsimplex of the triangulation. When $D_{\alpha}$ is a point, 
the average of the integral is reduced to the evaluation on the
point. 
$\boldsymbol{t}_{1}, \cdots, \boldsymbol{t}_{k_{\alpha}}$ are
$k_{\alpha}$ identical or different unit vectors to denote the
direction of the derivative where $k_{\alpha} = 0,1,2$. When 
$ k_{\alpha} = 1 $ only one direction is involved for the 
derivative, and the direction is denoted by $\boldsymbol{t}_{\alpha}$.
Let $\varphi_{\alpha}$ be the nodal basis function corresponding to
 $\mathcal{N}_{\alpha}$. Define $\omega_{\alpha} := \bigcup \{ T: \mathring{T} \cap
\operatorname{supp}(\varphi_{\alpha}) \neq  \varnothing, T \in \mathcal{T}_{h}\}$, $\#
\omega_{\alpha}: = \#\{ T : \mathring{T} \cap
\operatorname{supp}(\varphi_{\alpha}) \neq  \varnothing, T \in \mathcal{T}_{h}\}$ and
$h_{\alpha} = \max_{T \subset \omega_{\alpha}} h_{T}$. We are now
ready to give the definition of $\Pi_{0}$ as follows: For any $p_{h}
\in V_{0}$
\begin{equation}\label{def:Pi_h} 
\begin{aligned}
\mathcal{N}_{\alpha}(\Pi_{0} p_{h}) &= \displaystyle\frac{1}{\# \omega_{\alpha}} \displaystyle\sum_{T \subset \omega_{\alpha}}^{} \fint_{D_{\alpha}} \partial_{\boldsymbol{n}}(\left. p_{h}\right|_{T})(\boldsymbol{n}\cdot \boldsymbol{t}_{{\alpha}}) \\
&\text{when }D_{\alpha} \subset \partial \Omega\text{ and }k_{\alpha} = 1, \\
\mathcal{N}_{\alpha}(\Pi_{0} p_{h}) &= \displaystyle\frac{1}{\# \omega_{\alpha}} \displaystyle\sum_{T \subset \omega_{\alpha}}^{} \mathcal{N}_{\alpha}(\left. p_{h}\right|_{T}), \text{ else}, 
\end{aligned}
\end{equation} 
where $\boldsymbol{n}$ is the unit outer normal vector of $\partial
\Omega$. We note that the degrees of freedom corresponding to the
second-order derivative vanish since $p_h$ is piecewise linear.

The general theory of auxiliary space preconditioning simplifies the
analysis of preconditioners to the verification of the following two
key assumptions.

\begin{assumption}[stable decomposition]\label{prop:stab-decom}
There exists a uniform constant $c_{0}$ independent of both $\lambda$
  and $h$, such that for any $v \in V_{h}$, there exist $v_{h} \in
  V_{h}$ and $v_{0} \in V_{0}$ satisfy  
\begin{subequations} \label{eq:stab-decom} 
    \begin{align} 
      v &= v_{h} + \Pi_{0}v_{0}, \label{eq:stab-decom1} \\
      \|v_{h}\|^{2}_{\bar{R}^{-1}_{h}} + \|v_{0}\|_{R_{0}^{-1}}^{2}
        & \leq c_{0}^{2}  \|v\|_{\lambda, h}^{2}. \label{eq:stab-decom2}
    \end{align}
    \end{subequations}
\end{assumption}    

\begin{assumption}[boundedness]\label{prop:boundedness}
There exist uniform constants $ c_{1}$  and $ c_{2} $ independent of
  both $\lambda $ and $ h $ such that 
\begin{subequations} \label{eq:boundedness} 
\begin{align} 
    \|v_{0}\|_{A_{0}} &\leq c_{1} \|v_{0}\|_{R_{0}^{-1}} \quad \forall
    v_{0} \in V_{0}, \label{eq:boundedness1} \\
    \|v_{h}\|_{\lambda, h} &\leq c_{2}  \|v_{h}\|_{\bar{R}^{-1}_{h}}
    \quad \forall v_{h} \in V_{h}. \label{eq:boundedness2} 
\end{align}
\end{subequations}
\end{assumption}


\subsection{Additive preconditioner}
Firstly, we introduce the additive preconditioner $P_{\rm{a}}
  :V_{h}^{\prime} \to V_{h}$ as 
\begin{equation}\label{def:add-precond} 
    P_{\rm{a}}: = \bar{R}_{h} + \Pi_{0} R_{0} \Pi_{0}^{\prime}.
\end{equation} 
The following theorem plays a fundamental role in the theory of
  auxiliary space preconditioning \cite{xu1996auxiliary}.  
\begin{theorem}[spectral equivalence of additive
  preconditioner]\label{thm:add-precond}
    Let $P_{\rm{a}}: V_{h}^{\prime} \to V_{h}$ be the preconditioner
    defined in \eqref{def:add-precond}. If Assumption
    \ref{prop:stab-decom} (stable decomposition) and Assumption
    \ref{prop:boundedness} (boundedness) hold, then we have 
\begin{equation*}\label{eq:add-equivalence} 
c_{0}^{-2} (v_{h}, v_{h})_{\lambda, h} \leq (P_{\rm{a}} A_{\lambda, h}
  v_{h}, v_{h} )_{\lambda, h} \leq (c_{1}^{2} + c_{2}^{2}) (v_{h},
  v_{h})_{\lambda, h} \quad \forall v_{h} \in V_{h}, 
\end{equation*}  
That is, $P_{\rm{a}}$ is a uniform spectral equivalence preconditioner
of $A_{\lambda, h}$. 
\end{theorem}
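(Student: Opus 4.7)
The plan is to follow the classical auxiliary space preconditioning framework of \cite{xu1996auxiliary}: rather than bounding $P_{\rm a} A_{\lambda,h}$ directly, I will work with its Rayleigh quotient after the substitution $\phi = A_{\lambda,h} v \in V_h'$. Since $A_{\lambda,h}$ and $P_{\rm a}$ are both SPD, the operator $T := P_{\rm a} A_{\lambda,h}$ is self-adjoint with respect to $(\cdot,\cdot)_{\lambda,h}$, and its eigenvalues coincide with the values of
\begin{equation*}
  \mathcal{R}(\phi) \;:=\; \frac{\langle P_{\rm a}\phi,\phi\rangle}{\langle A_{\lambda,h}^{-1}\phi,\phi\rangle}, \qquad \phi \in V_h'\setminus\{0\}.
\end{equation*}
So it suffices to prove $c_0^{-2} \le \mathcal{R}(\phi) \le c_1^2 + c_2^2$.

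For the upper bound, I expand
\begin{equation*}
  \langle P_{\rm a}\phi,\phi\rangle \;=\; \langle \bar{R}_h \phi, \phi\rangle \;+\; \langle R_0 \Pi_0'\phi, \Pi_0'\phi\rangle,
\end{equation*}
and use the duality characterizations
\begin{equation*}
  \langle \bar R_h \phi,\phi\rangle = \sup_{v_h \in V_h}\frac{|\langle \phi, v_h\rangle|^2}{\|v_h\|_{\bar{R}_h^{-1}}^2}, \qquad \langle R_0 \Pi_0'\phi, \Pi_0'\phi\rangle = \sup_{v_0 \in V_0}\frac{|\langle \phi, \Pi_0 v_0\rangle|^2}{\|v_0\|_{R_0^{-1}}^2}.
\end{equation*}
The boundedness estimate \eqref{eq:boundedness2} gives $\|v_h\|_{\bar R_h^{-1}}\ge c_2^{-1}\|v_h\|_{\lambda,h}$, while \eqref{eq:boundedness1} combined with $\|v_0\|_{A_0} = \|\Pi_0 v_0\|_{\lambda,h}$ (which follows from $A_0 = \Pi_0' A_{\lambda,h}\Pi_0$) gives $\|v_0\|_{R_0^{-1}}\ge c_1^{-1}\|\Pi_0 v_0\|_{\lambda,h}$. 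Both suprema are therefore controlled by $\sup_{w\in V_h}|\langle\phi,w\rangle|^2/\|w\|_{\lambda,h}^2 = \|\phi\|_{A_{\lambda,h}^{-1}}^2$, producing $c_2^2\|\phi\|_{A_{\lambda,h}^{-1}}^2$ and $c_1^2\|\phi\|_{A_{\lambda,h}^{-1}}^2$ respectively. Summing yields the upper bound.

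For the lower bound, I fix $\phi$ and any $v\in V_h$ and invoke Assumption~\ref{prop:stab-decom}: decompose $v = v_h + \Pi_0 v_0$ with $\|v_h\|_{\bar R_h^{-1}}^2 + \|v_0\|_{R_0^{-1}}^2 \le c_0^2 \|v\|_{\lambda,h}^2$. Writing
\begin{equation*}
  \langle \phi, v\rangle = \langle \phi, v_h\rangle + \langle \Pi_0'\phi, v_0\rangle
\end{equation*}
and applying the Cauchy--Schwarz inequality in each pairing followed by a discrete Cauchy--Schwarz on the two resulting terms, I obtain
\begin{equation*}
  |\langle \phi, v\rangle| \;\le\; \sqrt{\langle \bar R_h\phi,\phi\rangle + \langle R_0\Pi_0'\phi,\Pi_0'\phi\rangle}\;\cdot\;\sqrt{\|v_h\|_{\bar R_h^{-1}}^2 + \|v_0\|_{R_0^{-1}}^2} \;\le\; c_0\,\sqrt{\langle P_{\rm a}\phi,\phi\rangle}\,\|v\|_{\lambda,h}.
\end{equation*}
Taking the supremum over $v$ gives $\|\phi\|_{A_{\lambda,h}^{-1}}^2 \le c_0^2 \langle P_{\rm a}\phi,\phi\rangle$, which is precisely $\mathcal{R}(\phi) \ge c_0^{-2}$.

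No serious obstacle is expected: this is essentially the standard additive Schwarz/fictitious-space argument, and all uniformity in $\lambda$ and $h$ is already packaged into Assumptions~\ref{prop:stab-decom}--\ref{prop:boundedness}. The only technical care needed is bookkeeping the identifications $A_0 = \Pi_0'A_{\lambda,h}\Pi_0$ and $(P_{\rm a} A_{\lambda,h}v,v)_{\lambda,h}=\langle A_{\lambda,h}P_{\rm a}A_{\lambda,h}v,v\rangle$ so that the passage between the $V_h$-side Rayleigh quotient and the $V_h'$-side ratio $\mathcal R(\phi)$ is transparent.
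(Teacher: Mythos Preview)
Your argument is correct and is precisely the standard additive Schwarz/fictitious-space proof. Note that the paper does not actually supply its own proof of this theorem: it states the result as a known cornerstone of the auxiliary space preconditioning framework and defers to \cite{xu1996auxiliary}, so your write-up is essentially a clean reconstruction of the classical argument the paper is citing.
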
 

In light of the above theorem and Lemma \ref{lm:FOV-norm-equivalent} (FOV-equivalent preconditioner),
one can see that $P_{\rm{a}}$ is a uniform FOV-equivalent
preconditioner of $ B_{\lambda, h}$ as long as Assumption
\ref{prop:stab-decom} (stable decomposition) and Assumption
\ref{prop:boundedness} (boundedness) are verified. We postpone those
verifications to Section \ref{sec:analysis}. 

\begin{remark}[additive preconditioner with Jacobi smoother] \label{rmk:add-Jacobi}
Let $D_h^{-1}$ be the Jacobi smoother of $A_{\lambda,h}$. From the norm equivalence between $ \bar{R}_{h}$ and $D_h^{-1}$ \cite[Lemma 4.6]{xu2017algebraic}, the additive preconditioner 
$$ 
\tilde{P}_{\rm{a}} := D_h^{-1} + \Pi_{0} R_{0} \Pi_{0}^{\prime}
$$ 
is also a uniform spectral equivalence preconditioner of $A_{\lambda,h}$.
\end{remark}

\begin{remark}[additive preconditioner with scaled
parameter]\label{rmk:omega-additive}
When implementing the additive preconditioners, a positive parameter
$\omega $ is usually introduced to balance the two components, namely, 
\begin{equation}\label{eq:omega-additive} 
    P_{\rm{a}} := \bar{R}_{h} + \omega \Pi_{0} R_{0} \Pi_{0}^{\prime}. 
\end{equation} 
A proper choice of $ \omega $ may lead to a better preconditioning
performance of $P_{\rm{a}}$ in practice.  
\end{remark}

\subsection{Multiplicative preconditioner}
We introduce the multiplicative preconditioner $ P_{\rm{m}}:
  V^{\prime}_{h} \to V_{h} $ by 
\begin{equation}\label{def:multi-precond} 
 I - P_{\rm{m}} A_{\lambda, h} := (I - R_{h} A_{\lambda, h})(I -
  \Pi_{0}R_{0}\Pi_{0}^{\prime}A_{\lambda, h})(I - R_{h}'A_{\lambda,
  h}).
\end{equation} 

Let $ \hat{R}_{0} = A_{0}^{-1}$ be the exact solver on $ V_{0}$. 
To analyse the multiplicative preconditioner $P_{\rm{m}}$, we
introduce an auxiliary multiplicative preconditioner $\hat{P}_{\rm{m}}
  : V_{h}^{\prime} \to V_{h} $ by 
\begin{equation}\label{eq:multi-precond-hat} 
    I - \hat{P}_{\rm{m}} A_{\lambda, h} := (I - R_{h}
    A_{\lambda, h})(I - \Pi_{0}\hat{R}_{0}\Pi_{0}^{\prime}A_{\lambda,
    h})(I - R_{h}'A_{\lambda, h}).
\end{equation} 
We emphasis that $\hat{P}_m$ is never computed but is useful for
theoretical purposes in Theorem \ref{thm:multi-precond} (spectral equivalence of multiplicative
preconditioner), which can be
divided into two steps: (i) The spectral equivalence between
$P_mA_{\lambda,h}$ and $\hat{P}_m A_{\lambda,h}$ by using Lemma
\ref{lm:spectral-equi-R0} (spectral equivalence of $ R_{0}
$); (ii) Estimate of $\hat{P}_m A_{\lambda,h}$
by two-level convergence results. For the second step, let $E := (I -
R_{h} A_{\lambda, h})(I -
\Pi_{0}\hat{R}_{0}\Pi_{0}^{\prime}A_{\lambda, h}) $ be the error
propagation operator the two-level method corresponding to
$\hat{P}_m$. 
Since the solver on $V_0$ is exact, the convergence rate
can be obtained in the following theorem. We refer to \cite[Theorem
5.3]{xu2017algebraic} for more details. 
\begin{theorem}[two-level convergence rate,
  \cite{xu2017algebraic}]\label{thm:two-level}
The following identity holds 
\begin{equation*}\label{eq:X-Z identity} 
\|E\|_{\lambda, h}^{2} = 1 - \displaystyle\frac{1}{K(V_{0})},
\end{equation*} 
where 
\begin{equation*}\label{eq:KV0} 
K(V_{0}) := \max_{v \in V_{h}} \min_{v_{0}\in V_{0}}
  \displaystyle\frac{\|v - \Pi_{0}
  v_{0}\|_{\bar{R}_{h}^{-1}}^{2}}{\|v\|_{\lambda, h}^{2}}. 
\end{equation*} 
\end{theorem}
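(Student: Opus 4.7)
The plan is to apply the two-level Xu--Zikatanov identity with exact coarse solver, as developed in \cite{xu2017algebraic}, specialized to the present setting. Throughout I abbreviate $A := A_{\lambda, h}$ and $\|\cdot\|_A := \|\cdot\|_{\lambda, h}$. The first observation is that $Q_0 := \Pi_0 \hat{R}_0 \Pi_0' A$ is the $A$-orthogonal projection onto $\Pi_0 V_0$: idempotency follows from $\hat{R}_0 \Pi_0' A \Pi_0 = A_0^{-1} A_0 = I_{V_0}$, and $A$-self-adjointness from the symmetry of $\hat{R}_0$. Because $Ev = (I - R_h A)(I - Q_0) v$ depends on $v$ only through $w := (I - Q_0) v$, while $\|v\|_A^2 = \|Q_0 v\|_A^2 + \|w\|_A^2$ by $A$-Pythagoras, the supremum defining $\|E\|_A$ is attained on $(\Pi_0 V_0)^{\perp_A}$.

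Next, combining this reduction with the elementary identity $\|(I - R_h A) w\|_A^2 = \|w\|_A^2 - (\bar{R}_h A w, A w)$, obtained by direct expansion of $\bar{R}_h = R_h + R_h' - R_h' A R_h$, yields
\begin{equation*}
1 - \|E\|_A^2 \;=\; \inf_{\substack{w \in (\Pi_0 V_0)^{\perp_A} \\ w \neq 0}} \frac{(\bar{R}_h A w, A w)}{\|w\|_A^2}.
\end{equation*}
The change of variable $v_0 \mapsto v_0 - \hat{R}_0 \Pi_0' A v$ shows that $\min_{v_0 \in V_0} \|v - \Pi_0 v_0\|_{\bar{R}_h^{-1}}^2$ depends on $v$ only through $w$, so together with $\|v\|_A \ge \|w\|_A$ the supremum in $K(V_0)$ is likewise attained on $(\Pi_0 V_0)^{\perp_A}$.

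For the upper bound $\|E\|_A^2 \leq 1 - 1/K(V_0)$, I would use the $A$-orthogonality $(Aw, \Pi_0 v_0) = 0$ to write $\|w\|_A^2 = (Aw, w - \Pi_0 v_0)$ and apply Cauchy--Schwarz with the dual pair $(\|\cdot\|_{\bar{R}_h}, \|\cdot\|_{\bar{R}_h^{-1}})$ on $V_h' \times V_h$, giving $\|w\|_A^4 \leq (\bar{R}_h A w, A w) \cdot \min_{v_0 \in V_0} \|w - \Pi_0 v_0\|_{\bar{R}_h^{-1}}^2$. For the matching lower bound, the extremal $w^\star \in (\Pi_0 V_0)^{\perp_A}$ for which Cauchy--Schwarz is saturated --- essentially the eigenvector of the generalized eigenvalue problem associated with the smallest Rayleigh quotient in the previous display --- realizes $\|w^\star\|_A^4 = (\bar{R}_h A w^\star, A w^\star) \cdot \min_{v_0} \|w^\star - \Pi_0 v_0\|_{\bar{R}_h^{-1}}^2$, so that at $w^\star$ the two restricted Rayleigh quotients are reciprocals and the desired identity follows.

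The hard part will be the extremality analysis. Cauchy--Schwarz gives the easy direction $\|E\|_A^2 \leq 1 - 1/K(V_0)$ in a few lines, but the matching lower bound requires producing an extremal $w^\star$ at which the inequality is saturated, which rests on a generalized eigenvalue analysis and is the technical core of the Xu--Zikatanov theory. For our purposes we would simply invoke \cite[Theorem~5.3]{xu2017algebraic}; the sketch above makes precise how that abstract result specializes to the two-subspace decomposition $V_h + \Pi_0 V_0$ with exact coarse solver $\hat{R}_0 = A_0^{-1}$.
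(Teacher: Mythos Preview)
The paper does not prove this theorem; it is stated with a citation to \cite[Theorem~5.3]{xu2017algebraic} and the reader is referred there ``for more details.'' Your sketch is a correct specialization of that abstract Xu--Zikatanov two-level identity to the present decomposition $V_h + \Pi_0 V_0$ with exact coarse solver, and in particular your final line --- invoking \cite[Theorem~5.3]{xu2017algebraic} --- is exactly what the paper itself does.
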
 

Note that the identity \eqref{eq:A0P0} implies that $\hat{R}_0
\Pi_0'A_{\lambda,h} = P_0$. For any $v_h, w_h \in V_h$, we have  
$$ 
\begin{aligned}
  (\Pi_0 P_0 v_h, w_h)_{\lambda,h} &= \langle P_0 v_h,
  \Pi_0'A_{\lambda,h}w_h \rangle = \langle P_0 v_h, A_0 P_0 w_h
  \rangle = (v_h, \Pi_0 P_0 w_h)_{\lambda,h}, \\
  (R_h' A_{\lambda,h}v_h, w_h)_{\lambda,h} &= \langle
  A_{\lambda,h}v_h, R_hA_{\lambda,h}w_h \rangle = (v_h,
  R_hA_{\lambda,h}w_h)_{\lambda,h},
\end{aligned}
$$ 
which means that $I - \Pi_{0}P_{0}$ and $I - R_h'A_{\lambda,h}$ are respectively
the dual operators of $I - \Pi_0 P_0$ and $I - R_hA_{\lambda,h}$ under the inner
product $(\cdot,\cdot)_{\lambda, h}$. As a consequence, 
\begin{equation} \label{eq:X-Z-dual}
  \|E\|_{\lambda,h}^2 = \|(I - R_hA_{\lambda,h})(I - \Pi_0
  P_0)\|_{\lambda,h}^2 = \|(I - \Pi_0
  P_0)(I - R_h'A_{\lambda,h})\|_{\lambda,h}^2.
\end{equation}
Moreover, under the Assumption \ref{prop:stab-decom} (stable
decomposition), we have $ K(V_{0}) \leq c_{0}^{2} $ and hence
$\|E\|_{\lambda, h}^{2} \leq 1 - \displaystyle\frac{1}{c_{0}^{2}}$,
which leads to the following theorem.

\begin{theorem}[spectral equivalence of multiplicative
  preconditioner]\label{thm:multi-precond} 
  Let $R_h$ be the Gauss-Seidel smoother for $A_{\lambda,h}$.
Under the Assumption \ref{prop:stab-decom} (stable decomposition),
the multiplicative preconditioner $P_{\rm{m}}$ defined in
\eqref{def:multi-precond} satisfies
\begin{equation*}\label{eq:multi-norm-equivalence} 
(P_{\rm{m}} A_{\lambda, h} v_{h}, v_{h} )_{\lambda, h} \simeq (v_{h}, v_{h})_{\lambda, h} \quad \forall v_{h} \in V_{h},
\end{equation*} 
with hidden constants independent of both $\lambda $ and $h$. That is,
$P_{\rm{m}}$ is a uniform spectral equivalence preconditioner of
$A_{\lambda, h}$.
\end{theorem}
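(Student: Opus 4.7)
The plan is to follow the two-step roadmap sketched just before the theorem: (i) establish the spectral equivalence $P_{\rm{m}} A_{\lambda,h} \simeq \hat{P}_{\rm{m}} A_{\lambda,h}$ on $V_h$ using Lemma~\ref{lm:spectral-equi-R0}, and then (ii) derive the two-sided bound $(\hat{P}_{\rm{m}} A_{\lambda,h} v_h, v_h)_{\lambda,h} \simeq \|v_h\|_{\lambda,h}^2$ using the X--Z identity in Theorem~\ref{thm:two-level} combined with Assumption~\ref{prop:stab-decom}.

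For step (ii), I would first verify that $P_0 \Pi_0 = I_{V_0}$, which is immediate from \eqref{eq:projection-V0} and \eqref{eq:A0P0}; combined with the $(\cdot,\cdot)_{\lambda,h}$-self-adjointness of $\Pi_0 P_0$ observed just before the theorem, this makes $\Pi_0 P_0$ the $A_{\lambda,h}$-orthogonal projection onto $\Pi_0 V_0$, hence idempotent. Since $\hat{R}_0 \Pi_0' A_{\lambda,h} = P_0$ by \eqref{eq:A0P0}, the middle factor in \eqref{eq:multi-precond-hat} equals $I - \Pi_0 P_0$, and the idempotence collapses the triple product to $I - \hat{P}_{\rm{m}} A_{\lambda,h} = E E^{*}$, where $E$ is the error propagation operator from Theorem~\ref{thm:two-level} and $E^{*}$ its adjoint in $(\cdot,\cdot)_{\lambda,h}$. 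Consequently
\begin{equation*}
0 \le ((I - \hat{P}_{\rm{m}} A_{\lambda,h}) v_h, v_h)_{\lambda,h} = \|E^{*} v_h\|_{\lambda,h}^{2} \le \|E\|_{\lambda,h}^{2}\|v_h\|_{\lambda,h}^{2},
\end{equation*}
and Theorem~\ref{thm:two-level} with $K(V_0) \le c_0^{2}$ (by Assumption~\ref{prop:stab-decom}) gives $\|E\|_{\lambda,h}^{2} \le 1 - c_0^{-2}$, whence $c_0^{-2}\|v_h\|_{\lambda,h}^{2} \le (\hat{P}_{\rm{m}} A_{\lambda,h} v_h, v_h)_{\lambda,h} \le \|v_h\|_{\lambda,h}^{2}$.

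For step (i), set $T := I - R_h' A_{\lambda,h}$, so that $T^{*} = I - R_h A_{\lambda,h}$ in $(\cdot,\cdot)_{\lambda,h}$, and both $I - P_{\rm{m}} A_{\lambda,h}$ and $I - \hat{P}_{\rm{m}} A_{\lambda,h}$ take the symmetric sandwich form $T^{*}(\cdot) T$. Writing $w = T v_h$ and $\xi = \Pi_0' A_{\lambda,h} w \in V_0'$, a direct expansion yields
\begin{equation*}
(P_{\rm{m}} A_{\lambda,h} v_h, v_h)_{\lambda,h} = \bigl(\|v_h\|_{\lambda,h}^{2} - \|w\|_{\lambda,h}^{2}\bigr) + \langle R_0 \xi, \xi\rangle,
\end{equation*}
and the same identity with $\hat{R}_0$ in place of $R_0$ for $\hat{P}_{\rm{m}}$. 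The contraction \eqref{eq:contraction-GS} forces $\|w\|_{\lambda,h} \le \|v_h\|_{\lambda,h}$, so the first bracket is non-negative, while Lemma~\ref{lm:spectral-equi-R0} provides $\langle R_0 \xi, \xi\rangle \simeq \langle \hat{R}_0 \xi, \xi\rangle$. Adding the same non-negative quantity to two equivalent non-negative quantities preserves equivalence, giving $(P_{\rm{m}} A_{\lambda,h} v_h, v_h)_{\lambda,h} \simeq (\hat{P}_{\rm{m}} A_{\lambda,h} v_h, v_h)_{\lambda,h}$; chaining with step (ii) finishes the proof.

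The main obstacle I anticipate is the transfer in step (i): one cannot simply propagate the spectral equivalence $R_0 \simeq \hat{R}_0$ through the symmetric sandwich, because the expansion contains a difference $\|v_h\|_{\lambda,h}^{2} - \|T v_h\|_{\lambda,h}^{2}$ whose sign is not obvious. The contraction property \eqref{eq:contraction-GS} of the (symmetrized) Gauss--Seidel smoother in $(\cdot,\cdot)_{\lambda,h}$ is precisely what pins down this sign and makes the naive comparison legitimate. Step (ii) is then essentially mechanical once $I - \hat{P}_{\rm{m}} A_{\lambda,h}$ is recognized as $E E^{*}$.
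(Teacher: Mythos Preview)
Your proposal is correct and follows essentially the same two-step approach as the paper: Step~(i) uses the contraction \eqref{eq:contraction-GS} to make $\|v_h\|_{\lambda,h}^2-\|w\|_{\lambda,h}^2$ non-negative and then invokes Lemma~\ref{lm:spectral-equi-R0} on the remaining $R_0$-term (the paper writes this term as $(R_0A_0P_0w,P_0w)_{A_0}$, which is your $\langle R_0\xi,\xi\rangle$), while Step~(ii) uses the idempotence of $I-\Pi_0P_0$ together with Theorem~\ref{thm:two-level} and \eqref{eq:X-Z-dual}. Your recognition of $I-\hat{P}_{\rm m}A_{\lambda,h}=EE^{*}$ is a slightly cleaner packaging of the paper's separate upper/lower bound computations, but the arguments are otherwise identical.
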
  
\begin{proof}
  \textit{Step (i):} For any $ v \in V_{h} $, denote $ w = (I -
  R_{h}'A_{\lambda, h})v $. By the definition of $ P_{\rm{m}} $
  \eqref{def:multi-precond}, we have 
   \begin{equation*} 
   \begin{aligned}
   (P_{\rm{m}}A_{\lambda, h}v, v)_{\lambda, h} &= (v, v)_{\lambda, h}
     - ((I - \Pi_{0}R_{0}\Pi_{0}^{\prime}A_{\lambda, h})w,w)_{\lambda,
     h} \\ 
   &= \|v\|_{\lambda, h}^{2} - \|w\|_{\lambda, h}^{2} +
     (\Pi_{0}R_{0}\Pi_{0}^{\prime}A_{\lambda, h}w, w)_{\lambda, h} \\ 
   &= \|v\|_{\lambda, h}^{2} - \|w\|_{\lambda, h}^{2} +
     (R_{0}A_{0}P_{0}w, P_{0}w)_{A_{0}}, 
   \end{aligned}
   \end{equation*} 
   where we use the identity \eqref{eq:A0P0} in the last step.
   Similarly, for $ \hat{P}_{\rm{m}} $ defined in
   \eqref{eq:multi-precond-hat}, we have 
   \begin{equation*} 
   (\hat{P}_{\rm{m}}A_{\lambda, h}v,v )_{\lambda, h} = \|v\|_{\lambda,
     h}^{2} - \|w\|_{\lambda, h}^{2} + (P_{0}w,P_{0}w)_{A_{0}}, 
   \end{equation*} 
   since $\hat{R}_0 = A_0^{-1}$. Invoking Lemma
   \ref{lm:spectral-equi-R0} (spectral equivalence of $R_{0}$), we
   have  
   \begin{equation}\label{eq:R0A0-equivalence} 
   C_{1}(P_{0}w , P_{0}w)_{A_{0}} \leq (R_{0}A_{0}P_{0}w,
     P_{0}w)_{A_{0}} \leq C_{2} (P_{0}w , P_{0}w)_{A_{0}}, 
   \end{equation} 
   where $ C_{1}, C_{2} $ are constants independent of $ \lambda $ and
   $ h $. Then,  
\begin{equation}\label{eq:multi-hat-equivalence} 
\min\{ 1, C_{1} \}(\hat{P}_{\rm{m}}A_{\lambda, h}v,v )_{\lambda, h}
  \leq (P_{\rm{m}}A_{\lambda, h}v,v)_{\lambda, h} \leq \max\{ 1,C_{2}
  \}(\hat{P}_{\rm{m}}A_{\lambda, h}v,v )_{\lambda, h}. 
\end{equation} 
Here, we use the contraction property of $R_{h}$, namely
$\|v\|_{\lambda, h}^{2} - \|w\|_{\lambda, h}^{2} \geq (1 - \|I - R_{h}'A_{\lambda, h}\|_{\lambda, h}^{2}) \|v\|_{\lambda,h}^{2} = (1 - \|I - R_{h}A_{\lambda, h}\|_{\lambda, h}^{2}) \|v\|_{\lambda,h}^{2} > 0 $.

\textit{Step (ii):} In light of \eqref{eq:multi-hat-equivalence}, we
  only need to show  
\begin{equation*}
    (\hat{P}_{\rm{m}}A_{\lambda, h}v,v )_{\lambda, h} \simeq
    (v,v)_{\lambda, h} \quad \forall v \in V_{h}, 
\end{equation*}
with hidden constants independent of $ \lambda $ and $ h $.  From the
identity \eqref{eq:A0P0}, we see that $(I - \Pi_{0}P_{0}) = (I -
\Pi_{0}P_{0})^{2}$ since $P_0 \Pi_0 = A_0^{-1}\Pi_0' A_{\lambda,h}
  \Pi_0 = I$. 
Then, we obtain the upper bound of $ \hat{P}_{\rm{m}}A_{\lambda, h} $:
\begin{equation*}\label{eq:upper-bound-hat} 
\begin{aligned}
  (\hat{P}_{\rm{m}}A_{\lambda, h}v,v)_{\lambda, h} &=
  \|v\|_{\lambda,h}^2 - ((I - \Pi_{0}P_{0})w,w)_{\lambda, h}\\
  &= \|v\|_{\lambda,h}^2 - ((I - \Pi_{0}P_{0})w,(I -
  \Pi_{0}P_{0})w)_{\lambda, h} \leq \|v\|_{\lambda,h}^2.
\end{aligned}
\end{equation*} 
Next, we estimate the lower bound of $ \hat{P}_{\rm{m}}A_{\lambda, h}
$ by Theorem \ref{thm:two-level} (two-level convergence rate) and
Assumption \ref{prop:stab-decom} (stable decomposition),  
\begin{equation*} 
\begin{aligned}
(\hat{P}_{\rm{m}}A_{\lambda, h}v,v)_{\lambda, h}  
  & = \|v\|_{\lambda,h}^2 - \|(I - \Pi_{0}P_{0})(I - R_{h}'A_{\lambda,
  h})v\|_{\lambda,h}^2 \\
  & \geq \Big(1 - \|(I - \Pi_{0}P_{0})(I - R_{h}'A_{\lambda,
  h})\|^{2}_{\lambda, h} \Big)\|v\|_{\lambda,h}^2 \\
  & =  \Big(1 - \|(I - R_{h}A_{\lambda,
  h})(I - \Pi_{0}P_{0})\|^{2}_{\lambda, h} \Big)\|v\|_{\lambda,h}^2
  ~~~~~~~ (\mbox{by }\eqref{eq:X-Z-dual})\\
  & =  \Big(1 - \|E\|^{2}_{\lambda, h} \Big)\|v\|_{\lambda,h}^2\\
  &= \displaystyle\frac{1}{K(V_{0})} \|v\|_{\lambda, h}^2 \geq
  c_{0}^{-2}\|v\|_{\lambda, h}^2.  
\end{aligned}
\end{equation*} 

Combining \textit{Step (i)} and \textit{Step (ii)}, we obtain
\begin{equation*}
    \min\{ 1, C_{1} \} c_{0}^{-2}\|v\|_{\lambda, h}^2 \leq
    (P_{\rm{m}}A_{\lambda, h}v,v)_{\lambda, h} \leq
    \max\{1,C_{2}\}\|v\|_{\lambda, h}^2.
\end{equation*}
The proof is thus complete.
\end{proof}

\begin{remark}
In the proof of Theorem \ref{thm:multi-precond} (spectral equivalence
 of multiplicative preconditioner), Assumption
\ref{prop:boundedness} (boundedness) is not directly used.  That is
because Lemma \ref{lm:spectral-equi-R0} (spectral equivalence of $
R_{0}$) leads to the boundedness on coarse space
  \eqref{eq:boundedness1}, and the boundedness on fine space
  \eqref{eq:boundedness2} is a direct consequence of the contraction
  property \eqref{eq:contraction-GS} of Gauss--Seidel smoother
  $R_{h}$.  
\end{remark}

\begin{remark}[computational complexity] \label{subsec:complexity}
We now discuss the computational complexity of the action of
preconditioner $P_{\rm{a}}$ \eqref{def:add-precond} and $P_{\rm{m}}$
\eqref{def:multi-precond}. Let $N_h$ be the number of degrees of
freedom, $N_p$ be the number of interior points of the grids. Since
the transfer operator $\Pi_0$ \eqref{def:Pi_h} is local, the action
except $R_0$ can be done within $\mathcal{O}(N_h)$ operations.

Invoking the definition of $R_{0}$ in \eqref{def:smoother-VL}, we can
see for any given $f_{h} \in V^{\prime}_{0}$ , $ u_{h} = R_{0}f_{h}$
can be obtained by solving two discrete Poisson-like equations
\eqref{eq:get-Rf}. The computational complexity of $(\lambda I -
\Delta_h)^{-1}$ with classic geometric multigrid methods was shown
to be optimal for $\lambda = \mathcal{O}(1)$
\cite{bramble1990parallel} and for arbitrary $\lambda > 0$
\cite{bramble2000computational}. For unstructured shape-regular grids
with $\lambda = \mathcal{O}(1)$, the computational complexity turns to
be $\mathcal{O}(N_p \log N_p)$ by constructing of an auxiliary coarse
grid hierarchy where the geometric multigrid can be applied
\cite{grasedyck2016nearly}. Therefore, the nearly optimal computational
complexity for arbitrary $\lambda>0$ on unstructured grids is to be
expected by combining the techniques from
\cite{bramble2000computational} and \cite{grasedyck2016nearly}.
\end{remark}

\begin{remark}[Implement of action $R_{0}$]
Let $ \{\psi_{i}\}_{i = 1}^{N_p}$ be the nodal basis functions of $
V_{0} $. Denote ${\bf A} = ( (\nabla \psi_{j},\nabla
  \psi_{i})_{\Omega})\in \mathbb{R}^{N_p \times N_p}$ , ${\bf M} = (
  (\psi_{j}, \psi_{i})_{\Omega}) \in \mathbb{R}^{N_p \times N_p}$ as
  the stiffness and mass matrix, respectively.
  For any $f_h \in V_{0}^{\prime}$, let ${\bf f} \in
  \mathbb{R}^{N_p} $ be its vector representation, i.e., $({\bf
  f})_{i} = \langle f_{h}, \psi_{i} \rangle $. Then $u_{h} =
  R_{0} f_{h}$ can be obtain by solving the following two linear
  systems successively. 
  \begin{subequations} \label{eq:Rf-matrix} 
  \begin{align*}
              {\bf f} & = (\lambda {\bf M}  + {\bf A} ) {\bf
              z},     \\
              {\bf M}{\bf z} & = (\lambda {\bf M} + {\bf A}) {\bf u},     
  \end{align*}
    \end{subequations}
    where ${\bf u} \in \mathbb{R}^{N_p} $ is the vector representation of
    $u_{h}$, i.e., $u_{h} = \sum_{i = 1}^{N_p}({\bf u})_{i} \psi_{i}$. 
\end{remark}


\section{Analysis of the auxiliary space
preconditioners}\label{sec:analysis}
In this section, we shall verify Assumption \ref{prop:stab-decom}
(stable decomposition) and Assumption \ref{prop:boundedness}
(boundedness), then show the proof of Lemma \ref{lm:spectral-equi-R0}
(spectral equivalence of $R_{0}$). For this propose, we first
show some properties about the space $ V_{0}$ introduced in
Section~\ref{sec:solver}. We refer to \cite{brenner2007mathematical, zhang2014optimal} for details on 
those results.   
\begin{lemma}[see \cite{zhang2014optimal}, Lemma 3.6 and \cite{brenner2007mathematical}]\label{lm:
  Clement}
    Let $ \Pi_{0}: V_{0} \to V_{h} $ be the interpolation operator
    defined in \eqref{def:Pi_h}. For any $ p_{h} \in V_{0} $ , it
    holds that 
\begin{equation}\label{eq:boundedness-Clement} 
\|\Pi_{0} p_{h}\|_{L^2(\Omega)} \lesssim \| p_{h} \|_{L^2(\Omega)}
  \quad\text{and}\quad |\Pi_{0} p_{h}|_{H^1(\Omega)} \lesssim | p_{h}
  |_{H^1(\Omega)}.
\end{equation} 
Moreover, we have
\begin{equation}\label{eq: err-Clement} 
\displaystyle\sum_{ T \in \mathcal{T}_{h}}^{} h_{T}^{-4}\|p_{h} -
  \Pi_{0} p_{h}\|_{L^2(T)}^{2} \lesssim \sum_{F \in
  \mathcal{F}_{h}^{i}}^{} h_{F}^{-1}
  \displaystyle\int_{F}^{}\llbracket \displaystyle\frac{\partial
  p_{h}}{\partial \boldsymbol{n}_{F}} \rrbracket^{2}\mathrm{d}s \qquad
  \forall p_{h} \in V_{0}. 
\end{equation} 
\end{lemma}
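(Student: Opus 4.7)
The plan is to establish both estimates by localized scaling arguments on each element $T$, combined with standard bookkeeping of the degrees of freedom of the Hermite/Argyris element and the structure of the averaging in \eqref{def:Pi_h}. Since the result is cited from \cite{zhang2014optimal,brenner2007mathematical}, the approach mirrors the classical analysis of enriching/smoothing operators.

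For the boundedness in \eqref{eq:boundedness-Clement}, I would argue element by element. On each $T\in\mathcal{T}_h$, $\Pi_0 p_h|_T\in\mathcal{P}_k(T)$, so by norm equivalence on this finite-dimensional space and standard scaling to the reference element,
\[
\|\Pi_0 p_h\|_{L^2(T)}^2 \lesssim \sum_{\alpha\,:\,T\subset\omega_\alpha} h_T^{d+2k_\alpha}\,|\mathcal{N}_\alpha(\Pi_0 p_h)|^2.
\]
Each $\mathcal{N}_\alpha(\Pi_0 p_h)$ is an average (over $\omega_\alpha$) of either point/face evaluations of $p_h$ or of $\partial_{\mathbf{t}_\alpha} p_h$; second-order DOFs vanish identically because $p_h$ is piecewise linear. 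A local trace/inverse estimate bounds each such average by $h_\alpha^{k_\alpha - d/2}\|p_h\|_{L^2(\omega_\alpha)}$ when $k_\alpha=0$, and by $h_\alpha^{k_\alpha - d/2}\|\nabla p_h\|_{L^2(\omega_\alpha)}$ when $k_\alpha=1$. Summing over $T$ and invoking the bounded overlap of the patches $\{\omega_\alpha\}$ yields the $L^2$ bound. The $H^1$ bound is obtained analogously after applying an inverse inequality on $T$ to exchange a derivative for a factor of $h_T^{-1}$ before appealing to the DOF equivalence.

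For the error estimate \eqref{eq: err-Clement}, the crucial observation is that, since $p_h \in V_0 \subset H^1_0(\Omega)$ is globally continuous, all DOFs of $p_h|_T - \Pi_0 p_h$ whose values depend only on continuous data of $p_h$ (vertex values, face-average values) vanish: the average over $\omega_\alpha$ reproduces $p_h$ at the DOF. Consequently, the only DOFs of $p_h|_T - \Pi_0 p_h$ that survive are the first-derivative DOFs (at vertices in 2D, at vertices and edges in 3D), which directly encode jumps of $\nabla p_h$ across shared interior faces, together with the boundary correction in the first case of \eqref{def:Pi_h}. The scaling argument then gives
\[
\|p_h-\Pi_0 p_h\|_{L^2(T)}^2 \lesssim \sum_{\alpha\,:\,T\subset\omega_\alpha,\,k_\alpha=1} h_T^{d+2}\,\bigl|\mathcal{N}_\alpha(p_h|_T) - \mathcal{N}_\alpha(\Pi_0 p_h)\bigr|^2,
\]
and each DOF difference on the right is estimated pointwise by $\|\llbracket \partial p_h/\partial\mathbf{n}_F\rrbracket\|_{L^\infty(F)}$ on a neighboring face $F\subset\overline{\omega_\alpha}$. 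Converting $L^\infty$ into $L^2$ on $F$ using that the jump is a low-degree polynomial on $F$ (an inverse estimate, with the correct power of $h_F$), multiplying by $h_T^{-4}$, and summing over $T$ produces exactly the right-hand side of \eqref{eq: err-Clement} after reindexing by faces.

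The main obstacle is the combinatorial bookkeeping of the higher-order DOFs of the Hermite/Argyris families: one must verify that the second-order vertex DOFs and any edge/face DOFs other than normal-derivative type contribute nothing to $p_h|_T-\Pi_0 p_h$, and that the special treatment of boundary-normal DOFs in the first branch of \eqref{def:Pi_h} is consistent with $p_h$ vanishing on $\partial\Omega$. Once this is handled, the remaining work is a routine reference-element scaling together with the bounded-overlap property of the patches, which also underlies \cite{zhang2014optimal, brenner2007mathematical}.
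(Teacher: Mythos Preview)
The paper does not supply its own proof of this lemma; it is stated with a citation to \cite{zhang2014optimal} and \cite{brenner2007mathematical} and then the text moves directly to the next lemma. Your sketch is essentially the standard argument underlying those references: element-wise scaling of the Hermite/Argyris DOFs for stability, and for \eqref{eq: err-Clement} the key observation that the averaging in $\Pi_0$ reproduces all value DOFs of $p_h$ (by continuity) and all second-order DOFs (they vanish on piecewise linears), so that only first-derivative DOFs survive in $p_h-\Pi_0 p_h$, each controlled by a normal-derivative jump on a neighboring interior face.

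One small gap: your one-line treatment of the $H^1$-stability (``apply an inverse inequality on $T$ to exchange a derivative for $h_T^{-1}$'') does not close as written, because the $k_\alpha=0$ DOFs then contribute terms of order $h_T^{-2}\|p_h\|_{L^2(\omega_\alpha)}^2$, which are not bounded by $|p_h|_{H^1}^2$. The fix is precisely the observation you already exploit for \eqref{eq: err-Clement}: since the value DOFs of $\Pi_0 p_h$ and of $p_h|_T$ coincide, write $|\Pi_0 p_h|_{H^1(T)}\le |p_h|_{H^1(T)}+|\Pi_0 p_h-p_h|_{H^1(T)}$ and bound the second term through its surviving first-derivative DOFs only, each of which satisfies $|\mathcal{N}_\alpha(\Pi_0 p_h)-\mathcal{N}_\alpha(p_h|_T)|\lesssim \|\nabla p_h\|_{L^\infty(\omega_\alpha)}\lesssim h_\alpha^{-d/2}|p_h|_{H^1(\omega_\alpha)}$. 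With that adjustment your outline matches the cited proofs.
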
 

\begin{lemma}\label{lm:discrete-laplace}
Suppose $\Omega \subset \mathbb{R}^{d}$ is a convex polytopal domain. Let $ \Delta_{h} $ be the
  discrete Laplacian operator defined in \eqref{eq:laplace-VL}.  Then
  it holds that 
    \begin{equation}\label{eq:discrete-laplace-equivalent} 
    \|\Delta_{h} p_{h}\|_{L^{2}(\Omega)}^{2} \simeq
      \displaystyle\sum_{F \in \mathcal{F}_{h}^{i}}^{} h_{F}^{-1}
      \displaystyle\int_{F}^{} \llbracket \displaystyle\frac{\partial
      p_{h}}{\partial \boldsymbol{n}_{F}} \rrbracket^{2}\mathrm{d}s
      \qquad \forall p_{h} \in V_{0}.
    \end{equation}  
\end{lemma}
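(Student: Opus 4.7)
The proof rests on a single identity obtained from the definition of $\Delta_h$ in \eqref{eq:laplace-VL} combined with element-wise integration by parts: since $p_h|_T$ is linear, $\Delta p_h|_T \equiv 0$, and since $v_0 \in V_0$ vanishes on $\partial\Omega$, we have
\begin{equation*}
  (-\Delta_h p_h, v_0)_\Omega \,=\, (\nabla p_h, \nabla v_0)_\Omega
  \,=\, \sum_{F \in \mathcal{F}_h^i} \int_F j_F\, v_0\, ds
  \qquad \forall\, v_0 \in V_0,
\end{equation*}
where $j_F := \llbracket \partial p_h/\partial \boldsymbol{n}_F \rrbracket|_F$ (a constant on $F$). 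The two inequalities in \eqref{eq:discrete-laplace-equivalent} are then separate consequences.

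\textbf{Upper bound.} Test with $v_0 = -\Delta_h p_h \in V_0$ and apply Cauchy--Schwarz face-by-face with the weights $h_F^{-1/2}$ and $h_F^{1/2}$:
\begin{equation*}
  \|\Delta_h p_h\|_{L^2(\Omega)}^2
  \,\leq\, \Bigl(\sum_F h_F^{-1}\|j_F\|_{L^2(F)}^2\Bigr)^{1/2}
           \Bigl(\sum_F h_F\,\|\Delta_h p_h\|_{L^2(F)}^2\Bigr)^{1/2}.
\end{equation*}
Because $\Delta_h p_h$ is a polynomial of bounded degree on each element, the trace inequality together with an inverse estimate yields $h_F\|\Delta_h p_h\|_{L^2(F)}^2 \lesssim \|\Delta_h p_h\|_{L^2(T_F^\pm)}^2$, and summing produces $\sum_F h_F\|\Delta_h p_h\|_{L^2(F)}^2 \lesssim \|\Delta_h p_h\|_{L^2(\Omega)}^2$. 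Cancelling $\|\Delta_h p_h\|$ yields the upper bound.

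\textbf{Lower bound.} Argue by duality. Introduce classical polynomial face bubbles $b_F \in H_0^1(\Omega)$, supported on $\omega_F = T_F^+\cup T_F^-$, with $\int_F b_F \sim |F|$, $\|b_F\|_{L^2} \lesssim h_F^{d/2}$, and set
\begin{equation*}
  w := \sum_{F \in \mathcal{F}_h^i} h_F^{-1} j_F\, b_F \in H_0^1(\Omega).
\end{equation*}
Element-wise integration by parts against $p_h$ (again using $\Delta p_h|_T\equiv0$) gives $(\nabla p_h,\nabla w)_\Omega = \sum_F \int_F j_F w \sim \sum_F h_F^{-1}\|j_F\|_{L^2(F)}^2$. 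Replace $w$ by a Scott--Zhang-type quasi-interpolant $I_0 w \in V_0$ that preserves face averages: then since $\nabla p_h|_T$ is constant, the divergence theorem implies $(\nabla p_h, \nabla(w-I_0 w))_T = \nabla p_h|_T\cdot \int_{\partial T}(w-I_0 w)\boldsymbol{n} = 0$ element-by-element. Hence $(\nabla p_h,\nabla w)_\Omega = (\nabla p_h,\nabla I_0 w)_\Omega = (-\Delta_h p_h, I_0 w)_\Omega$, and combining with the stability estimate $\|I_0 w\|_{L^2}\lesssim \|w\|_{L^2}\lesssim \bigl(\sum_F h_F^{-1}\|j_F\|^2\bigr)^{1/2}$ closes the argument.

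\textbf{Main obstacle.} The delicate ingredient is the face-average-preserving quasi-interpolation $I_0 : H_0^1(\Omega) \to V_0$. Since $V_0$ carries only vertex degrees of freedom, no standard Clément/Scott--Zhang operator exactly preserves all face averages; the cancellation of $(\nabla p_h,\nabla(w-I_0 w))$ must therefore be engineered either (i) by composing the Scott--Zhang operator with a suitable face-bubble correction that is absorbed into the $L^2$ stability, or (ii) by routing through the Crouzeix--Raviart space (on which face averages are natural DOFs) and then projecting back to $V_0$ with controlled error. Either route requires only the shape regularity of $\mathcal{T}_h$ and yields $h$-uniform constants; the convexity of $\Omega$ enters only if one instead prefers an elliptic-regularity detour, solving $-\Delta u = -\Delta_h p_h$ with $\|u\|_{H^2}\lesssim\|\Delta_h p_h\|_{L^2}$ and using $u$ in place of $w$.
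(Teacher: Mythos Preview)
Your upper bound is correct and routine. The genuine gap is in the lower bound: a face-average-preserving quasi-interpolant $I_0:H_0^1(\Omega)\to V_0$ does not exist. On any nontrivial simplicial mesh the number of interior faces exceeds the number of interior vertices, so the linear constraints $\int_F I_0 w\,ds=\int_F w\,ds$ form an overdetermined system for the vertex values of $I_0 w\in V_0$. Your two suggested fixes do not repair this. In route~(i) the bubble correction leaves $V_0$, and the identity $(\nabla p_h,\nabla I_0 w)_\Omega=(-\Delta_h p_h,I_0 w)_\Omega$ relies precisely on $I_0 w\in V_0$; with bubbles you recover only $\sum_F\int_F j_F\,I_0 w$, which is the jump term you were trying to bound. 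In route~(ii) any projection of the Crouzeix--Raviart interpolant back to $V_0$ destroys the face-average property you need.

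More decisively, the lower bound is \emph{false} without convexity, so any argument that bypasses elliptic regularity must fail. On an L-shaped domain take $f\in L^2$ smooth with $-\Delta u=f$, $u\in H_0^1\setminus H^2$, and let $p_h\in V_0$ be the Galerkin approximation on quasi-uniform meshes. Then $-\Delta_h p_h=Q_h f$ (the $L^2$-projection of $f$ onto $V_0$), so $\|\Delta_h p_h\|_{L^2}$ remains bounded, whereas reliability of the residual estimator gives $\sum_F h_F\|j_F\|_{L^2(F)}^2\gtrsim |u-p_h|_{H^1}^2-Ch^2\|f\|^2\sim h^{4/3}$, hence $\sum_F h_F^{-1}\|j_F\|_{L^2(F)}^2\gtrsim h^{-2/3}\to\infty$. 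The convexity hypothesis in the lemma is therefore not decorative. The paper's proof takes exactly the route you relegate to a side remark: it invokes $H^2$ elliptic regularity on convex polytopes (Grisvard) and then follows the argument of \cite[Lemma~3.1]{zhang2014optimal}; this is the essential mechanism, not a stylistic alternative.
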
 
\begin{proof} 
  \eqref{eq:discrete-laplace-equivalent} can be obtained by combining a similar technique in  \cite[Lemma 3.1]{zhang2014optimal} and the elliptic regularity in convex polytopal domain in $\mathbb{R}^{d}$ \cite[Chapter 3]{grisvard2011elliptic}.    
\end{proof}
For any $ v_{h} \in V_{h} $, define nodal interpolation $ I_{h}: V_{h}
\to V_{0}$ as
\begin{equation}\label{def:nodal-interpolation} 
     I_{h}v_{h}(\boldsymbol{x}) = v_{h}(\boldsymbol{x})  \qquad
     \forall \boldsymbol{x} \in \mathcal{N}_{h}. 
\end{equation} 
According to standard polynomial approximation theory
\cite{brenner2007mathematical}, we have the following lemma. 
\begin{lemma}[see
  \cite{brenner2007mathematical}]\label{lm:nodal-interpolation}
    For any $ v_{h} \in V_{h} $, it holds that 
\begin{equation}\label{eq:booundness-nodal-interpolation} 
\|I_{h} v_{h}\|_{L^2(\Omega)} \lesssim \|v_{h}\|_{L^2(\Omega)}
  \quad\text{and}\quad |I_{h} v_{h}|_{H^1(\Omega)} \lesssim
  |v_{h}|_{H^1(\Omega)}.  
\end{equation} 
Moreover, for any $ T \in \mathcal{T}_{h} $ we have
  \begin{equation}\label{eq:err-nodal-interpolation}
  \|v_{h} - I_{h}v_{h}\|_{L^{2}(T)} \lesssim h^{2}_{T}
    |v_{h}|_{H^{2}(T)} \quad\text{and}\quad
    \|\displaystyle\frac{\partial  ( v_{h} - I_{h} v_{h})}{\partial
    \boldsymbol{n}} \|_{L^{2}(\partial T)} \lesssim h_{T}^{1/2}
    |v_{h}|_{H^{2}(T)}.
\end{equation} 
\end{lemma}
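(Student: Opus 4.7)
The plan is to reduce each of the four estimates to its counterpart on a reference simplex $\hat T$ via affine scaling, in the spirit of the standard nodal interpolation theory in \cite{brenner2007mathematical}. Let $F_T:\hat T\to T$ denote the affine isomorphism with $|\det F_T'|\simeq h_T^d$ on the shape-regular family $\mathcal{T}_h$, and let $\hat I_h$ denote the corresponding reference nodal interpolation.

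\textbf{Boundedness \eqref{eq:booundness-nodal-interpolation}.}
I would first observe that on $\hat T$ the map $\hat v \mapsto \hat I_h \hat v$ is a continuous linear operator on the finite-dimensional space $\mathcal{P}_k(\hat T)$, since $\hat I_h\hat v$ is the unique $\mathcal{P}_1(\hat T)$ polynomial matching the $d+1$ vertex values of $\hat v$ and point evaluation is continuous on $\mathcal{P}_k(\hat T)$. Equivalence of norms on this finite-dimensional space immediately gives $\|\hat I_h\hat v\|_{L^2(\hat T)}\lesssim \|\hat v\|_{L^2(\hat T)}$; for the $H^1$-seminorm estimate I would pass to the quotient $\mathcal{P}_k(\hat T)/\mathbb{R}$, on which the $H^1$-seminorm is a norm and both $\hat v$ and $\hat I_h\hat v$ are well defined (both sides annihilate constants). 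Standard affine scaling $\|w\|_{L^2(T)}^2\simeq h_T^d\|\hat w\|_{L^2(\hat T)}^2$ and $|w|_{H^1(T)}^2\simeq h_T^{d-2}|\hat w|_{H^1(\hat T)}^2$ transports both bounds to $T$, and summing over $T\in\mathcal{T}_h$ yields \eqref{eq:booundness-nodal-interpolation}.

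\textbf{Error estimates \eqref{eq:err-nodal-interpolation}.}
The key structural property is that $I_h$ reproduces linear polynomials on each $T$, since a linear polynomial is determined by its $d+1$ vertex values. For the $L^2$ bound I would write, for any $p\in\mathcal{P}_1(T)$,
\[
v_h - I_h v_h = (I - I_h)(v_h - p),
\]
apply the reference-element boundedness of $I_h$ established above, and then take the infimum over $p\in\mathcal{P}_1(T)$ via the Bramble--Hilbert lemma to obtain $\|v_h - I_h v_h\|_{L^2(T)}\lesssim h_T^2|v_h|_{H^2(T)}$. For the normal-derivative bound I would first apply the scaled trace inequality
\[
\|w\|_{L^2(\partial T)}^2 \lesssim h_T^{-1}\|w\|_{L^2(T)}^2 + h_T\,|w|_{H^1(T)}^2
\]
to $w = \partial(v_h - I_h v_h)/\partial\boldsymbol{n}$. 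Since $I_hv_h$ is linear on $T$, $\nabla I_h v_h$ is a constant, so $|\nabla(v_h - I_h v_h)|_{H^1(T)} = |v_h|_{H^2(T)}$, while another Bramble--Hilbert argument applied to $\nabla v_h - \nabla I_h v_h$ (an $O(h_T)$ approximation of $\nabla v_h$ by a constant) yields $\|\nabla(v_h - I_h v_h)\|_{L^2(T)}\lesssim h_T|v_h|_{H^2(T)}$. Plugging both into the trace estimate gives the $h_T^{1/2}$ bound.

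The argument is textbook once the scaling framework is in place; the only point requiring mild care is tracking that all hidden constants depend only on the shape regularity of $\mathcal{T}_h$ and on the polynomial degree $k$ (inherited from norm equivalence on $\mathcal{P}_k(\hat T)$), but not on $h_T$ or $\lambda$. Since the reference element is fixed, no genuine obstacle is expected.
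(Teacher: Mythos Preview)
Your proposal is correct and follows exactly the standard scaling/Bramble--Hilbert route that the paper itself defers to by citing \cite{brenner2007mathematical} without giving a proof. There is nothing to compare: the paper treats this lemma as textbook, and your argument is precisely that textbook argument.
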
 

\begin{lemma}\label{lm:Clement-Hermite}For any $ v \in V_{h} $, it holds that 
\begin{equation} 
    \displaystyle\sum_{T \in \mathcal{T}_{h}}^{}(h_{T}^{-2} + \lambda
    )^{2}\|v - \Pi_{0} I_{h} v \|_{L^{2}(T)}^{2} \lesssim
    \displaystyle\sum_{T \in \mathcal{T}_{h}}^{}(1 + \lambda h_{T}^{2}
    )^{2}\|D^{2} v\|_{L^{2}(T)}^{2}. 
\end{equation} 
\end{lemma}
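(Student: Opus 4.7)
The plan is to prove the lemma by the decomposition
$$v - \Pi_{0} I_{h} v = (v - I_{h} v) + (I_{h} v - \Pi_{0} I_{h} v),$$
and bound each piece separately, exploiting the identity $(h_{T}^{-2} + \lambda)^{2} = h_{T}^{-4}(1 + \lambda h_{T}^{2})^{2}$ so that the mesh-size weight on the left is factored in a form compatible with the right-hand side. Throughout, I will use the local quasi-uniformity that follows from shape-regularity: for neighbouring elements $T, T' \in \mathcal{T}_{h}$ one has $h_{T} \simeq h_{T'}$ and hence $(1 + \lambda h_{T}^{2}) \simeq (1 + \lambda h_{T'}^{2})$, so element-wise weights can be pulled out of any local sum over $\omega_{T}$ or over faces at the price of a harmless constant.

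For the first piece, Lemma~\ref{lm:nodal-interpolation} gives $\|v - I_{h} v\|_{L^{2}(T)} \lesssim h_{T}^{2} |v|_{H^{2}(T)}$ element-wise, so
$$(h_{T}^{-2} + \lambda)^{2}\, \|v - I_{h} v\|_{L^{2}(T)}^{2} \lesssim (1 + \lambda h_{T}^{2})^{2}\, |v|_{H^{2}(T)}^{2},$$
which is already in the required form upon summation over $T$.

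For the second piece, I would first apply Lemma~\ref{lm: Clement} with $p_{h} = I_{h}v \in V_{0}$, together with the quasi-uniformity observation above, to obtain
$$\sum_{T} (h_{T}^{-2}+\lambda)^{2}\,\|I_{h} v - \Pi_{0} I_{h} v\|_{L^{2}(T)}^{2} \lesssim \sum_{F \in \mathcal{F}_{h}^{i}} (1 + \lambda h_{F}^{2})^{2}\, h_{F}^{-1} \int_{F} \llbracket \partial I_{h} v / \partial \boldsymbol{n}_{F} \rrbracket^{2}\, \mathrm{d}s.$$
Then I split the integrand by triangle inequality as $\llbracket \partial I_{h} v / \partial \boldsymbol{n}_{F} \rrbracket = \llbracket \partial (I_{h} v - v) / \partial \boldsymbol{n}_{F} \rrbracket + \llbracket \partial v / \partial \boldsymbol{n}_{F} \rrbracket$. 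The first summand is bounded face-wise by the trace estimate in Lemma~\ref{lm:nodal-interpolation}, giving $h_{F}^{-1}\|\llbracket \partial(I_{h}v - v)/\partial \boldsymbol{n}_{F} \rrbracket\|_{L^{2}(F)}^{2} \lesssim \sum_{T \supset F} |v|_{H^{2}(T)}^{2}$.

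The main obstacle is the remaining term $h_{F}^{-1}\|\llbracket \partial v / \partial \boldsymbol{n}_{F} \rrbracket\|_{L^{2}(F)}^{2}$, which is the intrinsic jump in the normal derivative of the Hermite/Argyris function $v$ across an interior face. The key structural input is that, by construction of $V_{h}$, $v$ is $C^{1}$ at all vertices in the 2D case and $C^{1}$ on all edges (and $C^{2}$ at vertices) in the 3D case, so the jump polynomial $\llbracket \partial v / \partial \boldsymbol{n}_{F} \rrbracket$, which lies in a fixed finite-dimensional polynomial space on $F$, vanishes on a unisolvent set of $C^{1}$-continuity loci. A scaled Poincaré inequality on $F$ combined with the standard trace inequality $\|\nabla w\|_{L^{2}(\partial T)}^{2} \lesssim h_{T}^{-1}|w|_{H^{1}(T)}^{2} + h_{T} |w|_{H^{2}(T)}^{2}$ applied to $w = \nabla v|_{T^{\pm}}$ then yields $h_{F}^{-1}\|\llbracket \partial v / \partial \boldsymbol{n}_{F} \rrbracket\|_{L^{2}(F)}^{2} \lesssim \sum_{T \supset F} |v|_{H^{2}(T)}^{2}$. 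Summing the two face contributions, absorbing the weight $(1 + \lambda h_{F}^{2})^{2}$ into the adjacent elements by local quasi-uniformity, and finally combining with the first piece completes the proof.
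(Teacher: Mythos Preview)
Your argument is correct. The paper handles the $\lambda$-weight by a slightly different bookkeeping device: it expands $(h_T^{-2}+\lambda)^2$ and reduces the claim to the three separate unweighted estimates
\[
\sum_{T} h_T^{-2\ell}\|v - \Pi_0 I_h v\|_{L^2(T)}^2 \lesssim \sum_{T} h_T^{-2\ell+4}\|D^2 v\|_{L^2(T)}^2, \qquad \ell = 0,1,2,
\]
citing \cite[Lemma~3.7]{zhang2014optimal} for the case $\ell=2$ and asserting the same argument for $\ell=0,1$. You instead use the factorisation $(h_T^{-2}+\lambda)^2 = h_T^{-4}(1+\lambda h_T^2)^2$ and rely on the \emph{local} character of the estimate in Lemma~\ref{lm: Clement}, together with shape-regular quasi-uniformity, to propagate the locally constant weight $(1+\lambda h_T^2)^2$ through a single argument. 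The decomposition $v-\Pi_0 I_h v = (v-I_h v)+(I_h v - \Pi_0 I_h v)$, the jump splitting, and the trace bounds you invoke are exactly the content of the cited reference, so the two routes are equivalent; yours is marginally more direct and spells out the details the paper defers. One cosmetic point: the phrase ``vanishes on a unisolvent set'' is a misnomer, since the $C^1$-continuity loci do not determine a polynomial in $\mathcal{P}_{k-1}(F)$ for $k\geq 3$; what you actually use---that the jump vanishes there so a scaled Poincar\'e inequality on $F$ applies, followed by a trace/inverse estimate for $D^2 v$---is correct and is precisely the scaling argument the paper invokes later in the proof of Theorem~\ref{thm:stab-decom}.
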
 
\begin{proof}
It suffics to show: 
    \begin{equation*}
        \displaystyle\sum_{T \in \mathcal{T}_{h}}^{}h_{T}^{-2\ell}
        \|v - \Pi_{0} I_{h} v \|_{L^{2}(T)}^{2} \lesssim
        \displaystyle\sum_{T \in \mathcal{T}_{h}}^{}h_{T}^{-2\ell +
        4}\|D^{2} v\|_{L^{2}(T)}^{2},
    \end{equation*} 
    for $ \ell = 0,1,2 $.  The case when $ \ell = 2 $ is proved in
    \cite[Lemma 3.7]{zhang2014optimal}. When $ \ell = 0,1 $, it can be
    proved by the same arguments in \cite[Lemma
    3.7]{zhang2014optimal}.  
\end{proof}

Next lemma gives a equivalence form of $ \|\cdot\|_{\bar{R}_{h}^{-1}} $
which will be used in the verification of Assumption
\ref{prop:stab-decom} (stable decomposition) and Assumption
\ref{prop:boundedness} (boundedness).
\begin{lemma}[norm equivalence of $ \bar{R}_{h}
  $]\label{lm:smoother-norm-equi}
  Let $ \mathcal{T}_{h} $ be a conforming shape regular triangulation
  of $ \Omega $.  Let $D_h^{-1}$ and $R_{h}$ be the Jacobi and Gauss-Seidel
  smoother for $A_{\lambda,h}$, respectively. Then, we have
    \begin{equation} \label{eq:norm-equ-Rh}
      \|v_{h}\|_{\bar{R}_{h}^{-1}}^{2} \simeq \|v_h\|_{D_h}^2
      \simeq \displaystyle\sum_{T
      \in \mathcal{T}_{h}}^{} ( h_{T}^{-2} + \lambda)^{2} \|
      v_{h}\|^{2}_{L^{2}(T)} \quad \forall v_{h} \in V_{h}. 
    \end{equation} 
    \end{lemma}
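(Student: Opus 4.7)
The plan is to establish the two equivalences in \eqref{eq:norm-equ-Rh} separately.

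\textit{Step 1 (Symmetric Gauss--Seidel versus Jacobi).} The first equivalence $\|v_h\|_{\bar R_h^{-1}}^{2} \simeq \|v_h\|_{D_h}^{2}$ is a purely algebraic statement about the SPD matrix representation of $A_{\lambda,h}$ in the nodal basis. It is exactly the standard spectral equivalence between the symmetric Gauss--Seidel and Jacobi smoothers under the hypothesis that every off-diagonal entry is dominated, up to a uniform constant, by the geometric mean of its diagonal end-points. This hypothesis is routine for finite element stiffness matrices: the basis is compactly supported, the number of nonzero entries per row is bounded by shape regularity, and each off-diagonal entry has the same $(h,\lambda)$-scaling as the corresponding diagonal entry by a Cauchy--Schwarz estimate element-by-element. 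I would therefore just invoke \cite[Lemma 4.6]{xu2017algebraic}, already cited in Remark~\ref{rmk:add-Jacobi}, and focus on the second equivalence.

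\textit{Step 2 (scaling of the diagonal of $A_{\lambda,h}$).} Expand $v_h=\sum_\alpha c_\alpha \varphi_\alpha$ in the nodal basis indexed as in \eqref{def:Pi_h}. A standard scaling argument on the reference simplex shows that if $\mathcal{N}_\alpha$ involves $k_\alpha\in\{0,1,2\}$ directional derivatives then, for each $T\subset\omega_\alpha$,
\begin{equation*}
\|\varphi_\alpha\|_{L^2(T)}^2 \simeq h_T^{d+2k_\alpha},\quad |\varphi_\alpha|_{H^1(T)}^2 \simeq h_T^{d+2k_\alpha-2},\quad |\varphi_\alpha|_{H^2(T)}^2 \simeq h_T^{d+2k_\alpha-4}.
\end{equation*}
Summing over $T\subset\omega_\alpha$ and using shape regularity gives
\begin{equation*}
(A_{\lambda,h})_{\alpha\alpha} = (\varphi_\alpha,\varphi_\alpha)_{\lambda,h} \simeq h_\alpha^{d+2k_\alpha-4} + 2\lambda h_\alpha^{d+2k_\alpha-2} + \lambda^2 h_\alpha^{d+2k_\alpha} \simeq h_\alpha^{d+2k_\alpha}\bigl(h_\alpha^{-2}+\lambda\bigr)^{2}.
\end{equation*}
Boundary DOFs and DOFs associated with lower-dimensional subsimplices enter the same bookkeeping with the obvious modifications.

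\textit{Step 3 (local $L^{2}$ norm versus DOFs).} By equivalence of norms on the finite-dimensional reference shape-function space, for any $T\in\mathcal{T}_h$,
\begin{equation*}
\|v_h\|_{L^2(T)}^2 \simeq h_T^d\sum_{\alpha:\mathcal{N}_\alpha\ \text{attached to}\ T} h_T^{2k_\alpha} c_\alpha^2,
\end{equation*}
where the factor $h_T^{2k_\alpha}$ compensates for the fact that derivative DOFs scale like $h_T^{-k_\alpha}$ under pull-back to the reference element. Multiplying by $(h_T^{-2}+\lambda)^{2}$, summing over $T$, and interchanging the sums yields
\begin{equation*}
\sum_T (h_T^{-2}+\lambda)^{2}\|v_h\|_{L^2(T)}^{2} \simeq \sum_\alpha c_\alpha^{2}\,h_\alpha^{d+2k_\alpha}(h_\alpha^{-2}+\lambda)^{2}\simeq \sum_\alpha c_\alpha^{2}(A_{\lambda,h})_{\alpha\alpha} = \|v_h\|_{D_h}^{2},
\end{equation*}
which, combined with Step 1, gives \eqref{eq:norm-equ-Rh}.

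\textit{Main obstacle.} The only mildly delicate point is the systematic bookkeeping of the scalings $h^{k_\alpha}$ carried by Hermite/Argyris basis functions attached to derivative DOFs. No new treatment of the Cordes parameter is needed: the diagonal naturally factors as $h_\alpha^{d+2k_\alpha}(h_\alpha^{-2}+\lambda)^{2}$, matching exactly the weight $(h_T^{-2}+\lambda)^{2}$ on the right-hand side, and the $\lambda$-uniformity of the equivalence is therefore automatic from the element-wise scaling alone.
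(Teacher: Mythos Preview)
Your proposal is correct and follows essentially the same approach as the paper: both invoke \cite[Lemma 4.6]{xu2017algebraic} for the Gauss--Seidel versus Jacobi equivalence, and both reduce the second equivalence to the standard scaling identities $\|\varphi_\alpha\|_{H^j(T)}^2\simeq h_T^{d+2k_\alpha-2j}$ and $\|v_h\|_{L^2(T)}^2\simeq h_T^d\sum_\alpha h_T^{2k_\alpha}(\mathcal N_\alpha(v_h))^2$. The only cosmetic difference is that the paper organizes the computation by treating the three seminorm levels $j=0,1,2$ separately and then summing with weights $1,2\lambda,\lambda^2$, whereas you first assemble the full diagonal entry $(A_{\lambda,h})_{\alpha\alpha}\simeq h_\alpha^{d+2k_\alpha}(h_\alpha^{-2}+\lambda)^2$ and then match it against the weighted $L^2$ sum.
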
 
    \begin{proof}
      By classical theory of iterative method \cite[Lemma
      4.6]{xu2017algebraic}, the symmetric Gauss-Seidel smoother and
      the Jacobi smoother are spectral equivalent for sparse SPD
      operator, namely $\|v_{h}\|_{\bar{R}_{h}^{-1}}^{2} \simeq
      \|v_h\|_{D_h}^2$.  Standard scaling argument
      \cite{brenner2007mathematical} gives that 
      \begin{subequations}
        \begin{align} 
          \|v_{h}\|^{2}_{L^{2}(T)} &\simeq
          \displaystyle\sum_{\omega_{\alpha } \supset T}^{} h_{T}^{2
            k_{\alpha} + 2} (\mathcal{N}_{\alpha}(v_{h}))^{2} \qquad
          \forall T \in \mathcal{T}_{h}, v_{h} \in V_{h}, \label{eq:norm-scaling} \\ 
    \displaystyle\sum_{T \in \mathcal{T}_{h}}
          \displaystyle\int_{T}^{}|D^{j} \varphi_{\alpha}|^{2}
          \mathrm{d}x &\simeq h_{\alpha}^{2 k_{\alpha} + 2 - 2j} \qquad
          j = 0,1,2,
          \label{eq:basis-scaling} 
        \end{align}
      \end{subequations}
    where $ \varphi_{\alpha} $ is the nodal basis function
      corresponding to the degree of freedom $
      \mathcal{N}_{\alpha}(\cdot)$. Combining
      \eqref{eq:norm-scaling} and \eqref{eq:basis-scaling}, we have
    \begin{equation}\label{eq:jacobi-smoother-D2u} 
    \begin{aligned}
    \displaystyle\sum_{T \in \mathcal{T}_{h}}^{} h_{T}^{-4}
      \|v_{h}\|_{L^{2}(T)}^{2}  &\simeq \displaystyle\sum_{T \in
      \mathcal{T}_{h}}^{} h_{T}^{-4} \displaystyle\sum_{\omega_{\alpha
      } \supset T}^{} h_{T}^{2 k_{\alpha} + 2}
      (\mathcal{N}_{\alpha}(v_{h}))^{2} \\
    & \simeq \displaystyle\sum_{\alpha}^{} (\# \omega_{\alpha}) h_{\alpha}^{2 k_{\alpha} - 2}(\mathcal{N}_{\alpha}(v_{h}))^{2} \\
    & \simeq  \displaystyle\sum_{\alpha}^{} \left( \displaystyle\sum_{T
      \in \mathcal{T}_{h}}^{} \displaystyle\int_{T}^{}|D^{2}
      \varphi_{\alpha}|^{2} \mathrm{d}x \right)
      (\mathcal{N}_{\alpha}(v_{h}))^{2}. 
    \end{aligned}
    \end{equation} 
    A similar argument leads to 
    \begin{equation}\label{eq:jacobi-smoother-Du} 
        \displaystyle\sum_{T \in \mathcal{T}_{h}}^{} h_{T}^{-2}
        \|v_{h}\|_{L^{2}(T)}^{2} \simeq \displaystyle\sum_{\alpha}^{}
        \left( \|\nabla \varphi_{\alpha} \|^{2}_{L^{2}(\Omega)}
        \right) (\mathcal{N}_{\alpha}(v_{h}))^{2},
    \end{equation}
    and 
    \begin{equation}\label{eq:jacobi-smoother-u} 
        \displaystyle\sum_{T \in \mathcal{T}_{h}}^{}
        \|v_{h}\|_{L^{2}(T)}^{2} \simeq \displaystyle\sum_{\alpha}^{}
        \left(  \| \varphi_{\alpha} \|^{2}_{L^{2}(\Omega)}  \right)
        (\mathcal{N}_{\alpha}(v_{h}))^{2}.
    \end{equation}
    Multiplying \eqref{eq:jacobi-smoother-D2u},
      \eqref{eq:jacobi-smoother-Du} and \eqref{eq:jacobi-smoother-u}
      respectively by $1$, $2\lambda$ and $\lambda^2$, then summing
      these equations, we obtain
$$ 
      \begin{aligned}
        & \quad ~\displaystyle\sum_{T \in \mathcal{T}_{h}}
        (h_T^{-2} + \lambda)^2\|v_{h}\|_{L^{2}(T)}^{2} \\
        &\simeq \displaystyle\sum_{\alpha} \left( \sum_{T\in
        \mathcal{T}_h} \int_T | D^2 \varphi_{\alpha}|^{2} + 2\lambda
        |\nabla \varphi_\alpha|^2 + \lambda^2 \varphi_\alpha^2 \mathrm{d}x
        \right) (\mathcal{N}_{\alpha}(v_{h}))^{2} = \|v_h\|_{D_h}^2,
      \end{aligned}
$$ 
which yields the norm equivalence \eqref{eq:norm-equ-Rh}.
\end{proof}
    
With the help of above lemmas, we are now ready to verify of
Assumption \ref{prop:stab-decom} (stable decomposition) and Assumption
\ref{prop:boundedness} (boundedness).
\begin{theorem}[verification of boundedness]\label{thm: boundedness-B} 
  There exist constants $c_{1}, c_{2}$ independent of $ \lambda $ and
  $h$, such that 
  \begin{subequations}
  \begin{align}
    \|v_{0}\|_{A_{0}} &\leq c_{1}
        \|v_{0}\|_{R^{-1}_{0}} \qquad \forall v_{0}\in V_{0}, 
\label{eq:boundedness-VL} \\ 
    \|v_{h}\|_{\lambda,h} &\leq c_{2}
    \|v_h\|_{\bar{R}_h^{-1}} \qquad \forall v_{h} \in V_{h}.
    \label{eq:boundedness-Vh} 
    \end{align}
  \end{subequations}
\end{theorem}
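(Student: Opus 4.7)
The two bounds decouple naturally. The fine-space estimate \eqref{eq:boundedness-Vh} is a termwise inverse inequality matched against the norm equivalence in Lemma \ref{lm:smoother-norm-equi}. The coarse-space estimate \eqref{eq:boundedness-VL} requires the stability and error bounds for $\Pi_0$ from Lemma \ref{lm: Clement} together with the discrete Laplacian equivalence of Lemma \ref{lm:discrete-laplace}; the only delicate ingredient is the piecewise Hessian of $\Pi_0 v_0$, which is handled by exploiting the piecewise linearity of $v_0$.

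\textbf{Fine bound \eqref{eq:boundedness-Vh}.} I would first invoke Lemma \ref{lm:smoother-norm-equi} to replace $\|v_h\|_{\bar{R}_h^{-1}}^2$ by the equivalent sum $\sum_T (h_T^{-2}+\lambda)^2 \|v_h\|_{L^2(T)}^2$. Expanding $(h_T^{-2}+\lambda)^2 = h_T^{-4} + 2\lambda h_T^{-2} + \lambda^2$ and applying the standard inverse inequalities $\|D^2 v_h\|_{L^2(T)}^2 \lesssim h_T^{-4}\|v_h\|_{L^2(T)}^2$ and $\|\nabla v_h\|_{L^2(T)}^2 \lesssim h_T^{-2}\|v_h\|_{L^2(T)}^2$ on each $T$ matches the three terms of $\|v_h\|_{\lambda,h}^2 = \sum_T\|D^2 v_h\|_{L^2(T)}^2 + 2\lambda\|\nabla v_h\|_{L^2(\Omega)}^2 + \lambda^2 \|v_h\|_{L^2(\Omega)}^2$ term by term, with hidden constants depending only on shape regularity and $k$.

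\textbf{Coarse bound \eqref{eq:boundedness-VL}.} Since $\|v_0\|_{A_0}^2 = \|\Pi_0 v_0\|_{\lambda,h}^2$, the $L^2$- and $H^1$-contributions $\lambda^2 \|\Pi_0 v_0\|_{L^2(\Omega)}^2$ and $2\lambda |\Pi_0 v_0|_{H^1(\Omega)}^2$ are immediately controlled by the stability bounds \eqref{eq:boundedness-Clement} of Lemma \ref{lm: Clement}, yielding $2\lambda|v_0|_{H^1(\Omega)}^2 + \lambda^2\|v_0\|_{L^2(\Omega)}^2$. For the piecewise Hessian term $\sum_T \|D^2 \Pi_0 v_0\|_{L^2(T)}^2$, I would exploit that $v_0|_T$ is linear and rewrite it as $\sum_T \|D^2(\Pi_0 v_0 - v_0)\|_{L^2(T)}^2$. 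Since $\Pi_0 v_0 - v_0$ is a polynomial on $T$, the elementwise inverse inequality gives
\begin{equation*}
\sum_T \|D^2 \Pi_0 v_0\|_{L^2(T)}^2 \lesssim \sum_T h_T^{-4}\|\Pi_0 v_0 - v_0\|_{L^2(T)}^2.
\end{equation*}
Applying \eqref{eq: err-Clement} of Lemma \ref{lm: Clement} and then the equivalence \eqref{eq:discrete-laplace-equivalent} of Lemma \ref{lm:discrete-laplace} converts this into $\|\Delta_h v_0\|_{L^2(\Omega)}^2$, which is exactly the remaining piece of $\|v_0\|_{R_0^{-1}}^2$.

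\textbf{Main obstacle.} The only non-trivial step is controlling the piecewise Hessian of $\Pi_0 v_0$ in a $\lambda$-uniform fashion, since $v_0 \notin H^2(\Omega)$ rules out a direct $H^2$-stability statement for $\Pi_0$. Passing through the error $\Pi_0 v_0 - v_0$ and combining the Cl\'ement-type approximation \eqref{eq: err-Clement} with the discrete elliptic regularity \eqref{eq:discrete-laplace-equivalent} is the key manoeuvre that converts local scaling into the coarse-space quantity $\|\Delta_h v_0\|_{L^2(\Omega)}^2$. Since every step in the chain is $\lambda$-independent and the lower-order pieces carry their natural $\lambda$ weights by \eqref{eq:boundedness-Clement}, combining the three contributions yields $c_1$ (and analogously $c_2$) independent of both $\lambda$ and $h$.
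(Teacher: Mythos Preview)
Your proposal is correct and follows essentially the same route as the paper: the fine-space bound via inverse inequalities and Lemma~\ref{lm:smoother-norm-equi}, and the coarse-space bound by splitting $\|\Pi_0 v_0\|_{\lambda,h}^2$ into the lower-order terms handled by \eqref{eq:boundedness-Clement} and the Hessian term handled via $D^2(\Pi_0 v_0 - v_0)$, inverse estimate, \eqref{eq: err-Clement}, and \eqref{eq:discrete-laplace-equivalent}. The identification of the Hessian control as the only non-trivial step matches the paper exactly.
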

\begin{proof} 
\eqref{eq:boundedness-Vh} follows from the standard inverse estimate
  and Lemma \ref{lm:smoother-norm-equi} (norm equivalence of $ \bar{R}_{h}
  $), 
    \begin{equation*} 
    \begin{aligned}
    \|v_{h}\|_{\lambda, h}^{2} &= \displaystyle\sum_{T \in
      \mathcal{T}_{h}}^{}\|D^{2} v_{h}\|_{L^{2}(T)}^{2} +
      2\lambda\|\nabla
      v_{h}\|_{L^{2}(\Omega)}^{2}+\lambda^{2}\|v_{h}\|_{L^{2}(\Omega)}^{2}
      \\ 
    &\lesssim \displaystyle\sum_{T \in \mathcal{T}_{h}}^{} (h_{T}^{-4}
      + 2\lambda h_{T}^{-2} + \lambda^{2}) \|v_{h}\|_{L^{2}(T)}^{2} \\ 
    &= \displaystyle\sum_{T \in \mathcal{T}_{h}}^{}(h_{T}^{-2} +
      \lambda)^{2}\|v_{h}\|_{L^{2}(T)}^{2} \simeq 
      \|v_h\|_{\bar{R}_h^{-1}}^2.
    \end{aligned}
    \end{equation*} 
    Now turn to \eqref{eq:boundedness-VL}, by combining Lemma~\ref{lm: Clement}, Lemma~\ref{lm:discrete-laplace} and inverse estimate, we have 
    \begin{equation} \label{eq:boundedness-VL-1}
    \begin{aligned}
    \displaystyle\sum_{ T \in \mathcal{T}_{h}}^{}\|D^{2} \Pi_h v_{0}\|_{L^{2}(T)}^{2}& =\displaystyle\sum_{ T \in \mathcal{T}_{h}}^{}\|D^{2} (\Pi_h v_{0} - v_{0})\|_{L^{2}(T)}^{2} \\
    & \lesssim \displaystyle\sum_{T \in \mathcal{T}_{h}}^{} h_{T}^{-4}\|\Pi_h v_{0} - v_{0}\|_{L^{2}(T)}^{2} ~~~~(\mbox{by inverse estimate} )\\ 
    & \lesssim  \sum_{F \in \mathcal{F}_{h}^{i}}^{} h_{F}^{-1}\displaystyle\int_{F}^{} \llbracket \displaystyle\frac{\partial v_{0}}{\partial \boldsymbol{n}_{F}} \rrbracket^{2}\mathrm{d}s ~~~~~~~~(\mbox{by Lemma }\ref{lm: Clement} ) \\ 
    & \lesssim  \|\Delta_{h} v_{0}\|_{L^{2}(\Omega)}^{2}. ~~~~~~~~~~~~~~~~~~~~~(\mbox{by Lemma }\ref{lm:discrete-laplace}) 
    \end{aligned}
    \end{equation} 
    Then, \eqref{eq:boundedness-VL} follows from \eqref{eq:boundedness-VL-1} and boundedness of $ \Pi_{0} $ in Lemma~\ref{lm: Clement}
    \begin{equation} \label{boundedness-A0}  
    \begin{aligned}
        \|v_{0}\|^{2}_{A_{0}} & = \displaystyle\sum_{T \in \mathcal{T}_{h}}^{}\|D^{2} \Pi_{0} v_{0}\|_{L^{2}(T)}^{2} + 2\lambda\|\nabla \Pi_{0} v_{0}\|_{L^{2}(\Omega)}^{2}+\lambda^{2}\|\Pi_{0} v_{0}\|_{L^{2}(\Omega)}^{2} \\ 
        & \lesssim  \|\Delta_{h} v_{0}\|_{L^{2}(\Omega)}^{2}  ~~~~~~~~~~~~~~~~~~~~~~~~~~~~~~~ (\mbox{by } \eqref{eq:boundedness-VL-1}) \\
        &\quad + 2\lambda\|\nabla v_{0}\|_{L^{2}(\Omega)}^{2} + \lambda^{2}\|v_{0}\|_{L^{2}(\Omega)}^{2} ~~~~~~~(\mbox{by Lemma }\ref{lm: Clement}) \\ 
        & =  \|v_{0}\|^{2}_{R^{-1}_{0}}.
    \end{aligned}
    \end{equation} 
   This completes the proof. 
\end{proof}

\begin{theorem}[verification of stable decomposition] \label{thm:stab-decom}
  For any $ v \in V_{h} $, there exist $ v_{h} \in V_{h} $ and $ v_{0}
  \in V_{0} $ such that 
\begin{subequations}
  \begin{align}
  v &= v_{h} + \Pi_{0} v_{0}, \label{eq:stab-decom-1} \\ 
      \|v_{h}\|_{\bar{R}_{h}^{-1}}^{2} +
      \|v_{0}\|^{2}_{R_{0}^{-1}} &\leq  c^{2}_{0}\|v\|_{\lambda,
      h}^{2}, \label{eq:stab-decom-2}
  \end{align}
\end{subequations} 
where $ c_{0} $ is a constant independent with $ \lambda $ and $ h $.
\end{theorem}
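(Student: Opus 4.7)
The plan is to take the natural decomposition $v_0 := I_h v \in V_0$ with $I_h$ the nodal piecewise-linear interpolation from \eqref{def:nodal-interpolation} and $v_h := v - \Pi_0 I_h v \in V_h$, so that \eqref{eq:stab-decom-1} holds by construction. The remaining task is the quantitative bound \eqref{eq:stab-decom-2}, which I split into the fine-space and auxiliary-space contributions.

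For the fine-space term, Lemma~\ref{lm:smoother-norm-equi} gives $\|v_h\|_{\bar{R}_h^{-1}}^2 \simeq \sum_{T\in\mathcal{T}_h} (h_T^{-2} + \lambda)^2 \|v - \Pi_0 I_h v\|_{L^2(T)}^2$, which by Lemma~\ref{lm:Clement-Hermite} is dominated by $\sum_T (1 + \lambda h_T^2)^2 \|D^2 v\|_{L^2(T)}^2$. Expanding $(1+\lambda h_T^2)^2 = 1 + 2\lambda h_T^2 + \lambda^2 h_T^4$ and using the elementwise inverse inequalities $h_T^2 \|D^2 v\|_{L^2(T)}^2 \lesssim \|\nabla v\|_{L^2(T)}^2$ and $h_T^4 \|D^2 v\|_{L^2(T)}^2 \lesssim \|v\|_{L^2(T)}^2$ (valid since $v|_T$ is a polynomial of degree $k$) yields $\|v_h\|_{\bar{R}_h^{-1}}^2 \lesssim \|D^2 v\|^2 + \lambda \|\nabla v\|^2 + \lambda^2 \|v\|^2 \simeq \|v\|_{\lambda,h}^2$.

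For the auxiliary-space term, write $\|v_0\|_{R_0^{-1}}^2 = \|\Delta_h I_h v\|_{L^2}^2 + 2\lambda \|\nabla I_h v\|_{L^2}^2 + \lambda^2 \|I_h v\|_{L^2}^2$. The last two summands are controlled by $\lambda \|\nabla v\|^2 + \lambda^2 \|v\|^2 \leq \|v\|_{\lambda,h}^2$ via the stability \eqref{eq:booundness-nodal-interpolation} of $I_h$. For $\|\Delta_h I_h v\|_{L^2}^2$, Lemma~\ref{lm:discrete-laplace} reduces the problem to bounding $\sum_{F\in\mathcal{F}_h^i} h_F^{-1} \int_F \llbracket \partial_{\boldsymbol{n}_F} I_h v \rrbracket^2 ds$. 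I split $\llbracket \partial_{\boldsymbol{n}_F} I_h v \rrbracket = \llbracket \partial_{\boldsymbol{n}_F}(I_h v - v)\rrbracket + \llbracket \partial_{\boldsymbol{n}_F} v \rrbracket$; the interpolation error piece is handled element-by-element using the trace bound $\|\partial_{\boldsymbol{n}}(v - I_h v)\|_{L^2(\partial T)} \lesssim h_T^{1/2} |v|_{H^2(T)}$ from \eqref{eq:err-nodal-interpolation}, which after multiplication by $h_F^{-1}$ sums to $\lesssim \|D^2 v\|_{L^2(\Omega)}^2$.

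The main obstacle is the remaining genuine jump $\llbracket \partial_{\boldsymbol{n}_F} v \rrbracket$, which does \emph{not} vanish since $v\in V_h$ is only $C^0$ across faces. Here one must exploit the defining feature of the element family in \cite{wu2021c0}: $v$ is $C^1$ at vertices in 2D, and $C^1$ on edges in 3D, so that on any interior face $F$ the jump $\llbracket \partial_{\boldsymbol{n}_F} v \rrbracket|_F$ is a polynomial of degree $\leq k-1$ vanishing on $\partial F$. A Friedrichs inequality on $F$ with zero Dirichlet data gives $\|\llbracket \partial_{\boldsymbol{n}_F} v \rrbracket\|_{L^2(F)}^2 \lesssim h_F^2 \|\nabla_{\tau}\llbracket \partial_{\boldsymbol{n}_F} v \rrbracket\|_{L^2(F)}^2$, where $\nabla_\tau$ denotes the tangential gradient along $F$. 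A polynomial trace–inverse estimate on the two adjacent elements $T^\pm$ then yields $\|\nabla_\tau \partial_{\boldsymbol{n}_F} v|_{T^\pm}\|_{L^2(F)}^2 \lesssim h_T^{-1} \|D^2 v\|_{L^2(T^\pm)}^2$, so that $h_F^{-1} \int_F \llbracket \partial_{\boldsymbol{n}_F} v \rrbracket^2 \lesssim \|D^2 v\|_{L^2(T^+\cup T^-)}^2$ and summing over $F$ gives $\|D^2 v\|_{L^2(\Omega)}^2 \leq \|v\|_{\lambda,h}^2$. Combining the three contributions yields \eqref{eq:stab-decom-2} with $c_0$ independent of $\lambda$ and $h$.
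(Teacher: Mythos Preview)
Your proof is correct and follows essentially the same approach as the paper: identical decomposition $v_0 = I_h v$, $v_h = v - \Pi_0 I_h v$, identical use of Lemmas~\ref{lm:smoother-norm-equi}, \ref{lm:Clement-Hermite}, \ref{lm:discrete-laplace}, and \ref{lm:nodal-interpolation}, and the same splitting of the normal-derivative jump. The only minor variation is in bounding the residual jump $\llbracket \partial_{\boldsymbol{n}_F} v \rrbracket$: the paper invokes a scaling argument (noting that the $C^0$ continuity on $F$ together with the $C^1$ constraint at $(d{-}2)$-subsimplices forces any piecewise-linear $V_h$-function on $\omega_F$ to be globally linear, so the jump seminorm is dominated by $\|D^2 v\|_{L^2(\omega_F)}^2$), whereas you use Friedrichs on $F$ plus a polynomial trace--inverse estimate; both routes exploit exactly the same $C^1$ information and yield the same bound.
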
 

\begin{proof}
Recall the nodal interpolation $ I_{h}: V_{h} \to V_{0} $ defined in
\eqref{def:nodal-interpolation}. For any $v\in V_{h} $, take $ v_{0}
  = I_{h} v $ and $ v_{h} = v - \Pi_{0} I_{h} v $ so that
  \eqref{eq:stab-decom-1} is satisfied. It follows
    from Lemma~\ref{lm:Clement-Hermite}, Lemma~\ref{lm:smoother-norm-equi} (norm equivalence of $ \bar{R}_{h}
    $) and inverse estimate that 
    \begin{equation*}\label{eq:stab-decom-step1} 
    \begin{aligned}
        \|v_{h}\|^{2}_{\bar{R}^{-1}_{h}} & \simeq \displaystyle\sum_{T \in \mathcal{T}_{h}}^{}(h_{T}^{-2} + \lambda )^{2}\|v_{h}\|^{2}_{L^{2}(T)}\\ 
        &=  \displaystyle\sum_{T \in \mathcal{T}_{h}}^{}(h_{T}^{-2} + \lambda )^{2}\|v - \Pi_{0} I_{h} v \|_{L^{2}(T)}^{2} \\& \lesssim  \displaystyle\sum_{T \in \mathcal{T}_{h}}^{}(1 + \lambda h_{T}^{2} )^{2}\|D^{2} v\|_{L^{2}(T)}^{2}  ~~~~~~~~~~~~~~~~~~~~~~~~~~~~~~(\mbox{by Lemma }\ref{lm:Clement-Hermite}) \\
        & \lesssim  \displaystyle\sum_{T \in \mathcal{T}_{h}}^{}\|D^{2} v\|_{L^{2}(T)}^{2} + 2\lambda\|\nabla v\|^{2}_{L^{2}(\Omega)} + \lambda^{2} \|v\|^{2}_{L^{2}(\Omega)}~~~~~(\mbox{by inverse estimate}) \\
        & = \|v\|^{2}_{\lambda, h}.
    \end{aligned}
    \end{equation*} 

    On the other hand, combining Lemma~\ref{lm:discrete-laplace} and
    the approximation property of $ I_{h} $ in
    Lemma~\ref{lm:nodal-interpolation}, we have 
    \begin{equation}\label{eq:stab-decom-step2} 
    \begin{aligned}
    \|\Delta_{h} I_{h} v\|_{L^{2}(\Omega)}^{2} & \lesssim
      \displaystyle\sum_{F \in \mathcal{F}_{h}^{i}}^{} h_{F}^{-1}
      \displaystyle\int_{F}^{} \llbracket \displaystyle\frac{\partial
      I_{h} v}{\partial \boldsymbol{n}_{F}} \rrbracket^{2} \mathrm{d}s
      \\ 
    & \lesssim \displaystyle\sum_{F \in \mathcal{F}_{h}^{i}}^{}
      h_{F}^{-1} \displaystyle\int_{F}^{} \llbracket
      \displaystyle\frac{\partial (I_{h} v - v)}{\partial
      \boldsymbol{n}_{F}} \rrbracket^{2} \mathrm{d}s  +
      \displaystyle\sum_{F \in \mathcal{F}_{h}^{i}}^{} h_{F}^{-1}
      \displaystyle\int_{F}^{} \llbracket \displaystyle\frac{\partial
      v}{\partial \boldsymbol{n}_{F}} \rrbracket^{2} \mathrm{d}s \\ 
    & \lesssim \displaystyle\sum_{T \in \mathcal{T}_{h}}^{} \|D^{2}
      v\|_{L^{2}(T)}^{2}.
    \end{aligned}
    \end{equation} 
Here, in the last step, the standard scaling argument
  \cite{brenner2007mathematical} gives that 
\begin{equation*} 
     h_{F}^{-1} \displaystyle\int_{F}^{} \llbracket
     \displaystyle\frac{\partial v}{\partial \boldsymbol{n}_{F}}
     \rrbracket^{2} \mathrm{d}s \lesssim \displaystyle\sum_{T \in
     \{T^{+}, T^{-}\}}^{} \|D^{2} v\|^{2}_{L^{2}(T)}, 
\end{equation*}   
holds for any interior face $ F = \partial T^{+} \cap \partial T^{-}
  $, where the $C^0$-continuity at face and $C^1$-continuity at
  $(d-2)$-dimensional subsimplex guarantee that the piecewise linear
  function on $\omega_F  = T^+ \cup T^-$ has to be a linear function
  on the $\omega_F$.  

Combining the boundedness of $ I_{h}$ in
  Lemma~\ref{lm:discrete-laplace} and \eqref{eq:stab-decom-step2}, we
  get 
    \begin{equation*}\label{eq:stab-decom-step3} 
    \begin{aligned}
    \|v_{0}\|_{R_{0}^{-1}}^{2} 
    & = \|\Delta_{h} I_{h} v\|^{2}_{L^{2}(\Omega)} + 2\lambda\|\nabla
      I_{h} v\|^{2}_{L^{2}(\Omega)} + \lambda^{2}\|I_{h}
      v\|^{2}_{L^{2}(\Omega)}  \\ 
    & \lesssim  \displaystyle\sum_{T \in \mathcal{T}_{h}}^{}  \|D^{2}
      v\|^{2}_{L^{2}(T)} ~~~~~~~~~~~~~~~~~~~~(\mbox{by }
      \eqref{eq:stab-decom-step2}) \\
      & \quad + 2\lambda\|\nabla  v\|^{2}_{L^{2}(\Omega)} +
      \lambda^{2}\|v\|^{2}_{L^{2}(\Omega)} ~~~~(\mbox{by
      } \eqref{eq:booundness-nodal-interpolation}) \\ 
    & = \|v\|_{\lambda, h}^{2}.
    \end{aligned}
    \end{equation*} 
The proof is thus complete.
\end{proof}

By the similar arguments in Theorem \ref{thm: boundedness-B}
(verification of boundedness) and Theorem \ref{thm:stab-decom}
(verification of stable decomposition), we are now ready to
give the proof of Lemma \ref{lm:spectral-equi-R0} (spectral
equivalence of $ R_{0} $).

\begin{proof}[Proof of Lemma \ref{lm:spectral-equi-R0} (spectral
  equivalence of $R_{0}$)]
Note that $v_{0} = I_{h}\Pi_{h}v_{0}$ for any $v_0 \in V_0$, then
\begin{equation*} 
\begin{aligned}
\|v_{0}\|^{2}_{R_{0}^{-1}}  
&= \|\Delta_{h} I_{h} \Pi_{h}v_{0}\|^{2}_{L^{2}(\Omega)} +
2\lambda\|\nabla I_{h} \Pi_{h}v_{0}\|^{2}_{L^{2}(\Omega)} +
\lambda^{2}\|I_{h} \Pi_{h}v_{0}\|^{2}_{L^{2}(\Omega)}  \\ 
&\lesssim \displaystyle\sum_{T \in \mathcal{T}_{h}}^{}  \|D^{2}
  \Pi_{h}v_{0}\|^{2}_{L^{2}(T)}
  ~~~~~~~~~~~~~~~~~~~~~~~~~~~~(\mbox{similar to }
  \eqref{eq:stab-decom-step2}) \\ 
& \quad + 2\lambda\|\nabla \Pi_{h}v_{0}\|^{2}_{L^{2}(\Omega)} +
  \lambda^{2}\|\Pi_{h}v_{0}\|^{2}_{L^{2}(\Omega)}
  ~~~~~~~~(\mbox{by } \eqref{eq:booundness-nodal-interpolation})\\ 
        &= \|\Pi_{h}v_{0}\|^{2}_{\lambda, h} = \|v_{0}\|_{A_{0}}^{2} . 
    \end{aligned}
    \end{equation*} 
The other direction has been proved in \eqref{boundedness-A0}, whence
we obtain the spectral equivalence $\|v_{0}\|_{A_{0}} \simeq
\|v_{0}\|_{R_{0}^{-1}}$.    
\end{proof}


\section{Numerical experiments} \label{sec:numerical}
In this section, we present numerical experiments to illustrate the
performance of PFMRES preconditioners for solving both linear and
nonlinear problems.

Denote $\kappa(\cdot)$ for the condition number of operator or matrix,
and DOF for the number of degrees of freedom. On the fine level, we
use Gauss-Seidel method for $ A_{\lambda, h} $ with three iterations
as the smoother on $V_{h}$, which shares the similar properties as the
Gauss-Seideal smoother $R_h$. For the actor of subspace smoother
$R_{0}$, we apply algebraic multigrid method (AMG) with stop criterion
$\|{\bf b} - {\bf A}{\bf x}\|_2 / \|{\bf b}\|_2 \leq 10^{-8}$ for a
linear sytem ${\bf A}{\bf x} = {\bf b}$. 

\subsection{Preconditioning effect of \texorpdfstring{$ P_{\rm{a}}
$}{Pa} and \texorpdfstring{$ P_{\rm{m}} $}{Pm} for \texorpdfstring{$
A_{\lambda, h} $}{A}} \label{subsec:ex1}

We test the theoretical results in Theorem \ref{thm:add-precond}
(spectral equivalence  of additive preconditioner) and Theorem
\ref{thm:multi-precond} (spectral equivalence of multiplicative
preconditioner) by examining the condition number of $
P_{\rm{m}}A_{\lambda, h} $ and $ P_{\rm{a}}A_{\lambda, h}$. To
showcase the flexibility of the preconditioner on non-uniform girds,
we illustrate the performance of preconditioners on a sequence of
graded bisection grids $ \{ \mathcal{T}_{\ell} \}_{\ell \in
\mathbb{N}_{0}} $ with grading factor $ 1/2 $ on $ \Omega = (-1,1)^{2}
$, see Fig. \ref{fig:test1-graded-mesh}. More specifically, we mark
the elements which satisfy $ |T| > C (\|x_{T}\|_{\ell_{2}} - 1/2)^{2}
/ \# \mathcal{T}_{\ell}$, where $ x_{T} $ is barycenter of the element
$ T $, $ \# \mathcal{T}_{\ell} $ is the number of elements in $
\mathcal{T}_{\ell} $. In the experiment, we set $ C = 1000 $.
Further, in the case of additive preconditioners, we apply the scaled
form \eqref{eq:omega-additive} in Remark \ref{rmk:omega-additive}
(additive preconditioner with scaled parameter) with $\omega = 1/10$. 
 
\begin{figure}[!htbp]
  \centering 
  \captionsetup{justification=centering}
  \subfloat[Level 3: DOF = 2,387]{
    \includegraphics[width=0.30\textwidth]{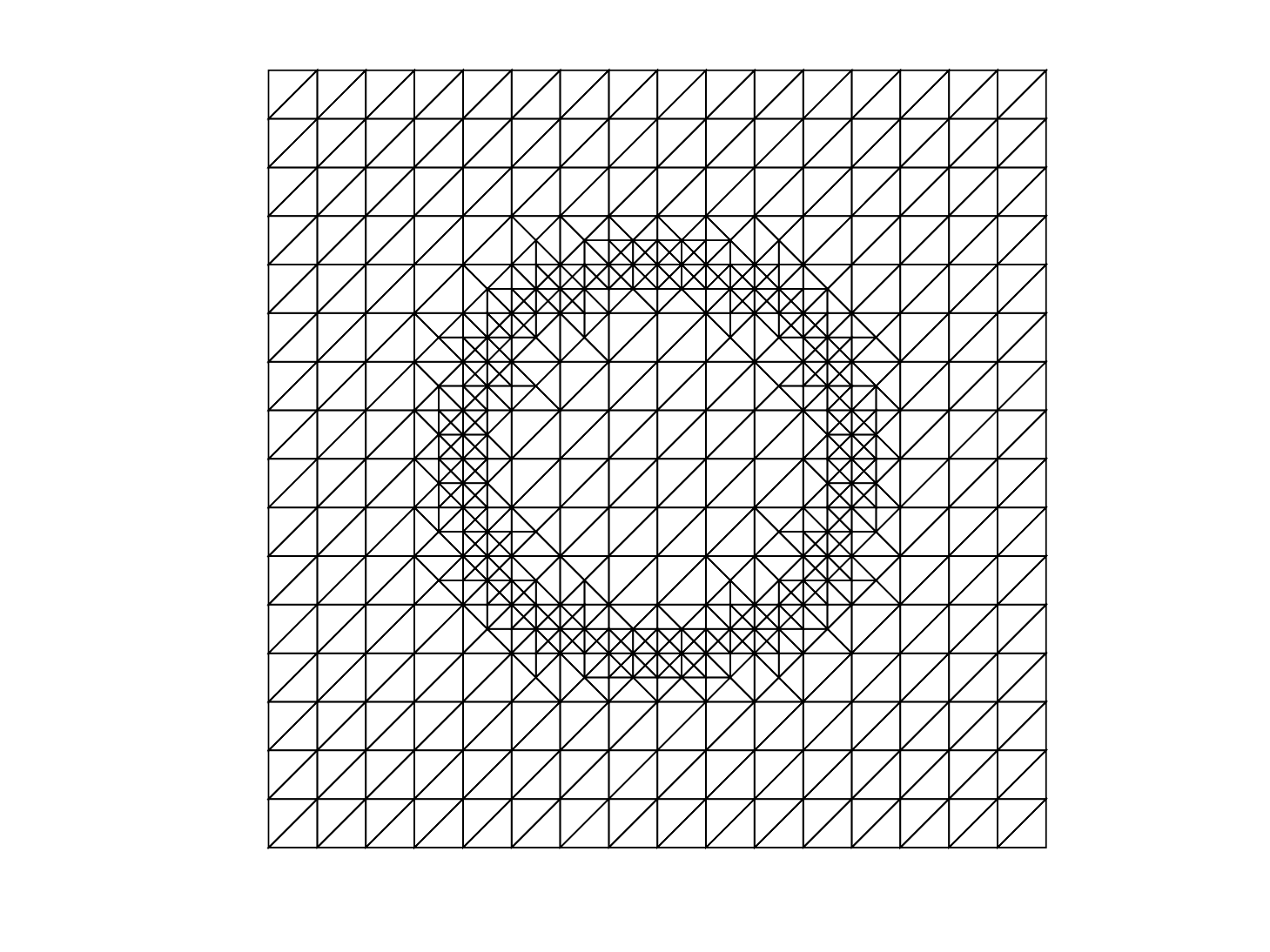}
    \label{fig:test1-level3}
  } %
  \subfloat[Level 5: DOF = 4,467]{
    \includegraphics[width=0.30\textwidth]{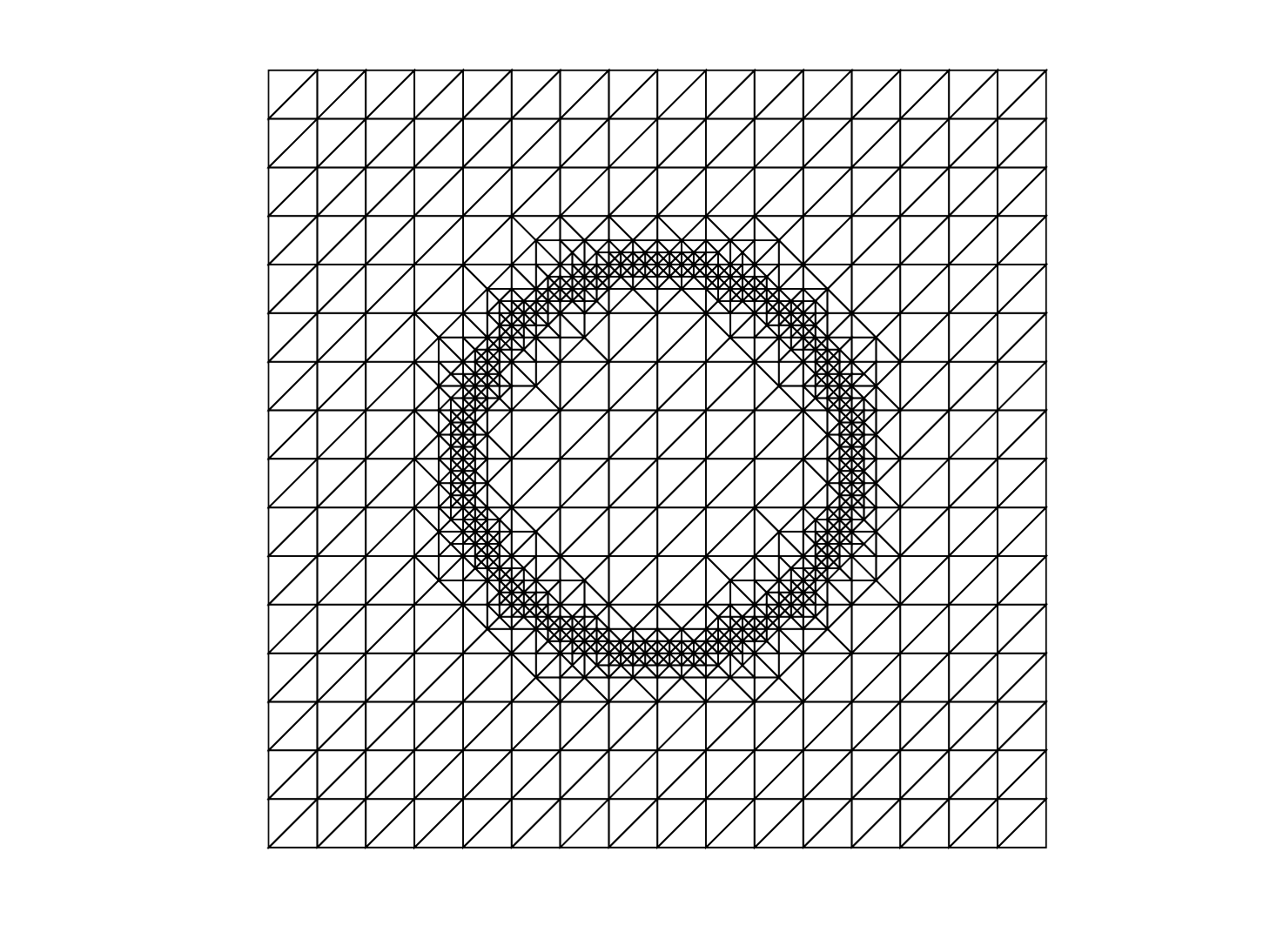}
    \label{fig:test1-level5}
  } %
  \subfloat[Level 7: DOF = 10,027]{
    \includegraphics[width=0.30\textwidth]{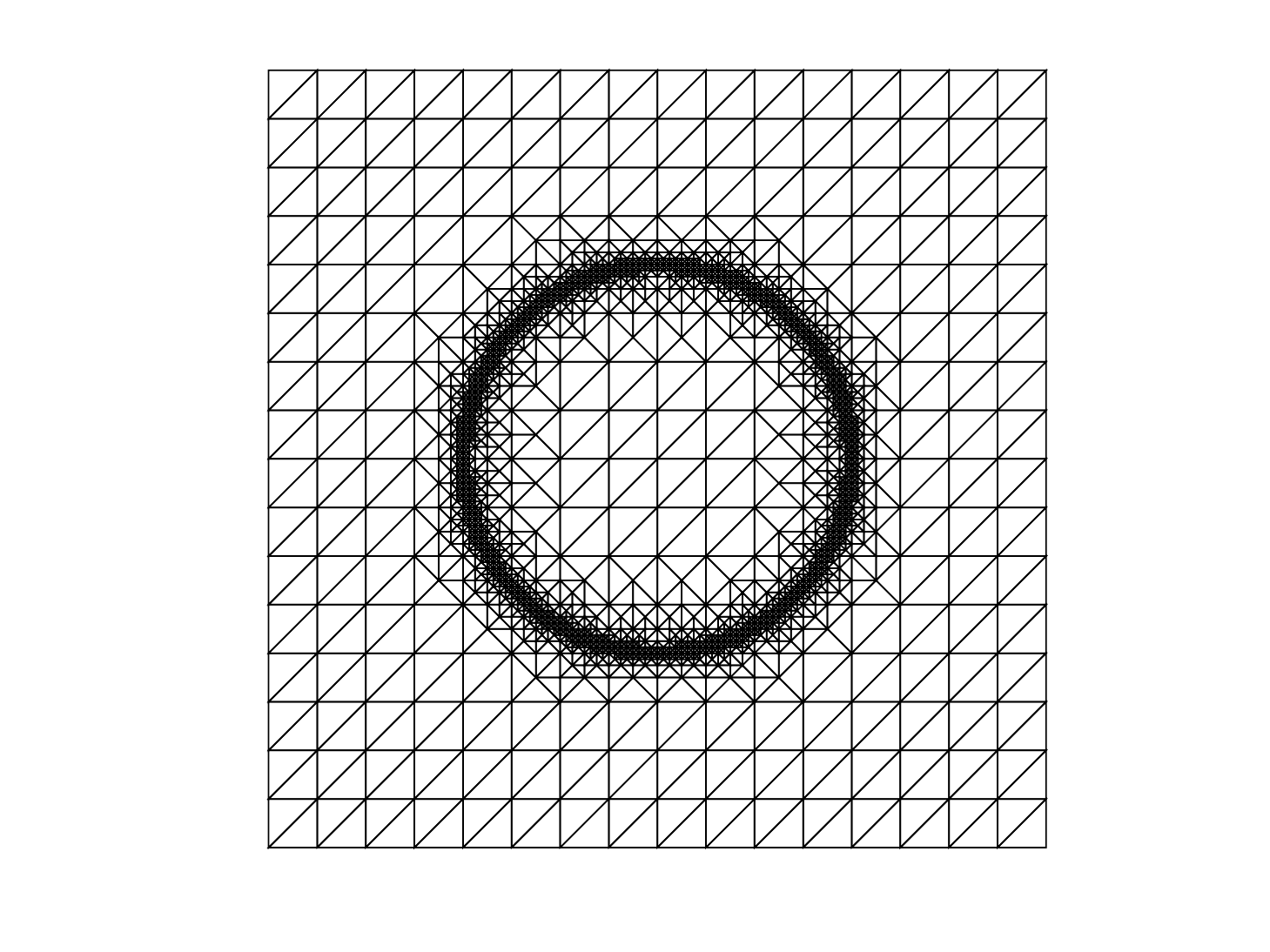}
    \label{fig:test1-level7}
  } %
  \caption{Graded grids at different levels for Experiment
  \ref{subsec:ex1}, initial grid size $h_0 = 1/8$.}
  \label{fig:test1-graded-mesh}
\end{figure}
  
The resulting condition numbers for additive and multiplicative
preconditioners at different bisection levels are listed respectively
in Table \ref{tab:test1-add} and Table
\ref{tab:test1-multi}.  We observe that both $ P_{\rm{m}} $ and $
P_{\rm{a}} $ are uniform preconditioners with respect to both
$\lambda$ and DOF, which is in agreement with the theoretical results
in Theorem \ref{thm:multi-precond} (spectral equivalence of
multiplicative preconditioner) and Theorem
\ref{thm:add-precond} (spectral equivalence of additive preconditioner). 
We also observe that the multiplicative
preconditioners perform better than additive ones. 

\begin{table}[!htbp]
    \centering
    \caption{Condition number of additive preconditioning for Experiment
    \ref{subsec:ex1}.}
  \begin{tabular}{|c|c|c|c|c|c|c|c|}
      \hline
  \multirow{2}[4]{*}{DOF} & \multicolumn{7}{|c|}{$
  \kappa(P_{\rm{a}}A_{\lambda, h}) $} \\
  \cline{2-8}
     &$\lambda=10^{-3}$ & $\lambda=10^{-2}$ & $\lambda=10^{-1}$
     &$\lambda=1$ &$\lambda=10$ & $\lambda=10^{2}$ & $\lambda=10^{3}$
     \\ \hline
      3,147 	& 1.64e2 	& 1.64e2 	& 1.60e2 	& 1.40e2 	& 9.41e1 	& 5.43e1
      & 2.48e1 \\ \hline  	 
      4,467 	& 1.65e2 	& 1.64e2 	& 1.60e2 & 1.40e2 	& 9.49e1 	& 6.43e1
      & 3.42e1 	\\ \hline
      6,587 &1.61e2 & 1.60e2 & 1.57e2 & 1.35e2 & 9.22e1&
      7.09e1 & 4.52e1 \\ \hline	
      10,027 &  1.61e2 	& 1.61e2 	& 1.57e2 & 1.36e2 	& 9.62e1 	&
      8.04e1 	& 5.49e1	 \\ \hline 
      15,927 	& 1.62e2 	& 1.62e2 	& 1.58e2 	& 1.36e2 	& 9.95e1 	&
      8.78e1  	& 6.46e1  	\\ \hline 
    \end{tabular}
    \label{tab:test1-add}
\end{table}

\begin{table}[!htbp]
\centering
\caption{Condition number of multiplicative preconditioning for Experiment
  \ref{subsec:ex1}.}
\begin{tabular}{|c|c|c|c|c|c|c|c|}
    \hline
    \multirow{2}[4]{*}{DOF} & \multicolumn{7}{|c|}{$
    \kappa(P_{\rm{m}}A_{\lambda, h}) $} \\
\cline{2-8}
&$\lambda=10^{-3}$ & $\lambda=10^{-2}$ & $\lambda=10^{-1}$
  &$\lambda=1$ &$\lambda=10$ & $\lambda=10^{2}$ & $\lambda=10^{3}$  \\ 
    \hline
    3,147 	& 5.76  	& 5.76  	& 5.76  	& 5.75  	& 5.65  	& 5.21
    & 4.16  	\\ \hline 
    4,467 	& 5.76 	& 5.76 	& 5.75& 5.75 	& 5.68 	& 5.42  	& 4.77  	\\
    \hline
    6,587 	& 5.63  	& 5.63  	& 5.63  	& 5.62  	& 5.56  	& 5.39
    & 4.99 	\\ \hline 
    10,027 	& 6.03  	& 6.03  	& 6.03  	& 6.03  	& 6.02  	& 5.94
    & 5.59   \\\hline
    15,927 	& 6.07  	& 6.07  	& 6.07  	& 6.07  	& 6.06  	& 6.00
    & 5.75  \\ \hline
\end{tabular}
\label{tab:test1-multi}
\end{table}

\subsection{Uniform preconditioning for the linearised problems}
\label{subsec:ex2}

In the second experiment, we consider the linearised problems in the
semi-smooth Newton steps, i.e.,  elliptic equations in non-divergence
form: 
\begin{equation}\label{eq:nondiv} 
A:D^{2}u + \boldsymbol{b}^{\theta}\cdot\nabla u - c^{\theta} u =
  f^{\theta}, 
\end{equation} 
on the domain $\Omega = (-1,1)^{2}$. The coefficients are set to be 
\begin{equation}\label{eq:test1-coef} 
A = \begin{pmatrix}
 2 & \displaystyle\frac{x_{1}x_{2}}{|x_{1} x_{2}|} \\
 \displaystyle\frac{x_{1}x_{2}}{|x_{1} x_{2}|} & 2 \\ 
\end{pmatrix} 
\quad \boldsymbol{b}^{\theta} = \sqrt{\theta}(x_{1}, x_{2})^{T} \quad
  c^{\theta} = 3\theta, 
\end{equation}
where $\theta$ is a parameter so that the $\lambda$ in Cordes condition differs
with varying $\theta$.  Let exact solution be
\begin{equation}\label{eq:test1-solu} 
  u(x_{1}, x_{2}) = (x_{1}\mathrm{e}^{1 - |x_{1}|} - x_{1})(x_{2}
  \mathrm{e}^{1- |x_{2}|} - x_{2}),  
\end{equation} 
the right hand side $ f^{\theta} $ is directly calculated from the
equation \eqref{eq:nondiv}. For any given $\theta > 0$, we may  set $
\lambda = \theta $, which yields 
\begin{equation}\label{eq:test1-cordes} 
\displaystyle\frac{|A|^{2} + |\boldsymbol{b}^{\theta}|^{2}/2\lambda +
  (c^{\theta}/\lambda)^{2}}{(\operatorname{Tr}A + c^{\theta}/\lambda
  )^{2}} = \displaystyle\frac{10 + 1/2(x_{1}^{2} + x^{2}_{2}) + 9}{(4
  +3)^{2}} \leq \displaystyle\frac{20}{49},
\end{equation} 
which means that the Cordes condition is satisfied for $ \varepsilon =
9/20 $. The PGMRES method is applied to solve the linear system
arising from the discretization of \eqref{eq:nondiv} with varying
parameter $ \theta > 0 $. We stop the iteration when the relative
residual is smaller than $10^{-6}$. The iteration numbers for
PGMRES are shown in Tables \ref{tab:test-2-add} and
\ref{tab:test-2-multi}. Similarly, a better performance of
multiplicative preconditioner is observed.  

\begin{table}[!htbp]
  \centering
  \caption{The iteration steps of additive preconditioning for Experiment \ref{subsec:ex2}.}
\resizebox{\textwidth}{!}{ %
\begin{tabular}{|c|c|c|c|c|c|c|c|}
    \hline
     \multirow{2}[4]{*}{DOF} &$\lambda=10^{-3}$ & $\lambda=10^{-2}$ & $\lambda=10^{-1}$ & $\lambda=1$ &$\lambda=10^{1}$ & $\lambda=10^{2}$ & $\lambda=10^{3}$ \\
   \cline{2-8}
   & steps & steps  & steps  & steps & steps  & steps  & steps  \\ 
    \hline
    5,055 & 120 	& 119 	& 116 		& 117 	& 99 	& 71 	& 36 	\\  \hline
    20,351 	& 133 	& 132 	& 132 	& 130 	& 114 	& 95 	& 59 	\\  \hline	
    81,663 	& 137 	& 137 	& 136 & 134 	& 118 	& 109 	& 72 	\\  \hline
    32,7167 	& 138 	& 137 	& 135& 135 	& 118 	& 117 	& 80    \\ 
     \hline
    \end{tabular}%
    }
  \label{tab:test-2-add}%
 \end{table}%
\begin{table}[!htbp]
    \centering
    \caption{The iteration steps of multiplicative preconditioning for Experiment \ref{subsec:ex2}. }
  \resizebox{\textwidth}{!}{ %
  \begin{tabular}{|c|c|c|c|c|c|c|c|}
      \hline
      \multirow{2}[4]{*}{DOF}  &$\lambda=10^{-3}$ & $\lambda=10^{-2}$ & $\lambda=10^{-1}$ & $\lambda=1$ &$\lambda=10^{1}$ & $\lambda=10^{2}$ & $\lambda=10^{3}$ \\
     \cline{2-8}
     & steps & steps  & steps  & steps & steps  & steps  & steps  \\ 
      \hline 
      5,055 	& 28 	& 28 	& 28 & 28 	& 28 	& 27 	& 24 	\\  \hline
      20,351 & 26 	& 26 	& 26	& 26 	& 25 	& 25 	& 29 	\\  \hline
      81,663 & 25 	& 24 	& 24 	& 24 	& 23 	& 23 	& 28 	\\  \hline 	
      32,7167 	& 23 	& 23 	& 23 & 22 	& 21 	& 21 	& 26 	\\ 
       \hline
      \end{tabular}%
      }
    \label{tab:test-2-multi}%
   \end{table}%

\subsection{Application to the HJB equations}
   In this experiment, we solve the nonlinear HJB equations
   \eqref{eq:HJB} on the domain $ \Omega = (0,1)^{2}$. Following
   \cite{smears2014discontinuous}, we take $ \Lambda =
   [0, \pi/3] \times \operatorname{SO} (2) $, where $
   \operatorname{SO}(2) $ is the set of $ 2 \time 2 $ rotation
   matrices. The coefficients are given by $ \boldsymbol{b}^{\alpha} =
   0 $, $ c^{\alpha} = \pi^{2} $, and 
   \begin{equation}\label{eq:test2-coef} 
   A^{\alpha} = \displaystyle\frac{1}{2} \sigma^{\alpha} (\sigma^{\alpha})^{T}, \qquad \sigma^{\alpha} = R^{T}\begin{pmatrix}
    1 &   \sin \theta\\
     0  &  \cos \theta \\
   \end{pmatrix} , \qquad \alpha = (\theta, R) \in \Lambda. 
   \end{equation}
   We choose $ f^{\alpha} = \sqrt{3} \sin^{2} \theta / \pi^{2} + g $,
   $g$ independent of $ \alpha $ such that the exact solution of the HJB
   equations \eqref{eq:HJB} is $ u(x_{1}, x_{2}) =
   \operatorname{exp}(x_{1}x_{2}) \sin(\pi x_{1} ) \sin (\pi x_{2})
   $. That is
   \begin{equation}\label{eq:test2-g} 
   g = \sup_{\alpha \in \Lambda}\{ A^{\alpha}: D^{2} u - c^{\alpha} u - \sqrt{3}\sin^{2} \theta /\pi^{2} \}. 
   \end{equation}    

   In the semi-smooth Newton algorithm, the initial guess is $u_h^0 =
   0$.  For the discrete linearised systems arising from each Newton
   step, the multiplicative preconditioners is applied. We compute the
   average number of PGMERS iterations of per Newton step which
   require to reduce the residual norm below a relative tolerance of
   {$ 10^{-4} $.  Convergence of the Newton method was determined by
   requiring a step-increment $ L^{2} $-norm below $ 10^{-6} $. These
   tolerances are chosen to balance the different sources of error
   originating from discretization.

   \begin{table}[htbp]
    \centering
    \caption{Average PGMRES iterations (Newton steps)} 
  \begin{tabular}{c c c}
     \hline
      $\rm{DOF}$ & $ h $ & Average PGMRES iterations (Newton steps)\\
     \hline
     	 71 & 	 1/4 & 14 (6)  	\\ 
       303 & 	 1/8 & 18 (6)    \\
       1,247 & 	1/16 & 18 (6)  \\
       5,055 & 	 1/32 & 18 (7) \\
       20,351 & 	 1/64 & 18 (8) 	 \\
        \hline 
      \end{tabular}%
    \label{tab:test-3-avgGmres}%
   \end{table}%

   The numbers of semi-smooth Newton iterations and average PGMRES
   iterations are listed in Table \ref{tab:test-3-avgGmres}.  As can
   be observed from \cite{smears2014discontinuous,wu2021c0}, the
   semi-smooth Newton algorithm convergences fast (within eight
   iterations in the numerical experiment). In each Newton step, we
   apply the PGMRES with multiplicative preconditioner due to its
   better performance than the additive one.  Based on the results
   shown in the Table \ref{tab:test-3-avgGmres}, we can conclude that
   our multiplicative preconditioner is also effective and robust in the
   application to the HJB equations.


\section*{Acknowledgments}
The authors would like to express their gratitude to Prof. Jun Hu in Peking University for his helpful discussions. 

\bibliographystyle{siamplain}
\bibliography{HermiteAux} 

\end{document}